\newtheorem{theorem}{Theorem}[section]
\newtheorem{lemma}[theorem]{Lemma}
\newtheorem{proposition}[theorem]{Proposition}
\newtheorem{remark}[theorem]{Remark}
\newcommand{\mc}[1]{{\mathcal #1}}
\newcommand{\mf}[1]{{\mathfrak #1}}
\newcommand{\bb}[1]{{\mathbb #1}}
\newcommand{\bs}[1]{{\boldsymbol #1}}
\newcommand{\ms}[1]{{\mathscr #1}}
\begin{document}

\title[Metastability of mean-field Potts Model]
{Metastability of Non-reversible, Mean-field Potts Model with Three Spins}

\begin{abstract}
  We examine a non-reversible, mean-field Potts model with three spins
  on a set with $N\uparrow\infty$ points. Without an external field,
  there are three critical temperatures and five different metastable
  regimes. The analysis can be extended by a perturbative argument to
  the case of small external fields. We illustrate the case of large
  external fields with some phenomena which are not present in the
  absence of external field.
\end{abstract}

\author{C. Landim, I. Seo}

\address{\noindent IMPA, Estrada Dona Castorina 110, CEP 22460 Rio de
  Janeiro, Brasil and CNRS UMR 6085, Universit\'e de Rouen, Avenue de
  l'Universit\'e, BP.12, Technop\^ole du Madril\-let, F76801
  Saint-\'Etienne-du-Rouvray, France.  \newline e-mail: \rm
  \texttt{landim@impa.br} }

\address{\noindent Courant Institute of Mathematical Sciences, New York
  University, 251 Mercer Street, New York, N.Y. 10012-1185, USA.
  \newline e-mail: \rm \texttt{insuk@cims.nyu.edu} }

\keywords{Metastability, Tunneling behavior, Mean-field Potts model,
  non-reversible Markov chains}

\maketitle

\section{Introduction\label{sec1}}

Some recent progress has been achieved in the potential theory of
non-reversible Markov chains. Gaudilli\`ere and Landim \cite{GL}
obtained a variational formula for the capacity between two disjoint
sets, expressed as a double infima over flows satisfying certain
boundary conditions, and Slowik \cite{Slo} showed that the capacity
can also be represented as a double suprema over flows satisfying a
different set of boundary conditions.

These advancements permitted to describe the metastable behavior of
some non-reversible dynamics. The evolution of the condensate in a
totally asymmetric zero range process on a finite torus has been
examined in \cite{Lan2}, and the behavior of the ABC model among the
segregated configurations in the zero temperature limit has been
derived by Misturini in \cite{Mis}, applying the martingale method
introduced in \cite{BL1, BL2}.

In a previous paper \cite{LS}, inspired by the mean-field Potts model
presented in this article and based on the variational formulae
alluded to above, we characterized the metastable behavior of
non-reversible, continuous-time random walks in a potential field,
extending to the irreversible setting results obtained by Bovier,
Eckhoff, Gayrard, Klein for reversible diffusions \cite{BEGK1, BEGK2}
and by Landim, Misturini, Tsunoda for reversible random walks in a
potential field \cite{LMT}. Among other results, we proved the
Eyring-Kramers formula \cite{Ber} for the transition rate between a
metastable set and a stable set, predicted by Bouchet, Reygner
\cite{BR} in the context of irreversible diffusion processes.

We examine in this article a non-reversible, mean-field Potts model
\cite{Pot, Wu} with three spins. In the same way as the mean-field
Ising model is mapped to a nearest-neighbor, one-dimensional random
walk on a potentiel field \cite{CGOV}, the dynamics of the mean-field
Potts model can be mapped to a non-reversible random walk on a
two-dimensional simplex.

If there is no external magnetic field, three critical temperatures and five
different metastable regimes are observed. We refer to Figure
\ref{fig6} for an illustration of the potential in each regime. There
exists a temperature $0<T_3<\infty$ above which no metastable behavior
is observed because in this regime the entropy prevails over the
energy. If the temperature $T$ is greater than or equal to $T_3$, in a
typical configuration, one third of the spins takes one of the
possible values of the spin, and starting from any configuration the
system is driven progressively to this state.

There is a second critical temperature, denoted by $T_2$, at which
four metastable sets coexist. The first one corresponds to the
configurations in which one third of the spins takes one of the
possible values of the spin, while the other three correspond to the
configurations in which a large majority of the spins takes one of the
spins value. We call the first metastable set the entropic one, and
the last three metastable sets the energetic ones.  The dynamics among
the metastable sets can be described by a $4$-state Markov chain whose
graph has a star shape. In this reduced model, jumps from a point
which represents an energetic metastable set to a similar point are
not allowed. Hence, to go from an energetic point to another, the
reduced chain must visit the entropic point.

In the temperature range $(T_2,T_3)$, there are three metastable sets
which correspond, in the terminology introduced in the previous
paragraph, to the energetic sets, and one stable set, the entropic
set. In this regime, in an appropriate time scale, starting from an
energetic set, after an exponential time, the process jumps to the
stable set and their remains for ever. Therefore, this evolution can
be represented by a $4$-state Markov chain whose graph has a star
shape and whose center is an absorbing point.

There is a third critical temperature, denoted by $T_1$. At this
temperature there are three metastable sets, the so-called energetic
ones. These three metastable sets are separated by a unique critical
point, but the Hessian of the potential at this critical point is the
zero matrix. In particular, this point is not a saddle point and the
approach developed in \cite{LS} is not useful to prove the metastable
behavior of this dynamics. Hence, even if we believe that a metastable
behavior occurs among the three energetic sets, the existing
techniques do not cover this situation.

In the temperature range $(T_1,T_2)$, there are four metastable sets
and two time scales. The entropic set is shallower than the energetic
ones and in a certain time scale, starting from the entropic set,
after an exponential time the process jumps with equal probability to
one of the energetic sets and there remains for ever. In a longer time
scale, the metastable behavior of this dynamics can be described by a
$3$-state Markov chain whose graph is the complete graph. 

Actually, in this range of temperatures a remarkable phenomenon
occurs.  Starting from one of the energetic sets, after an exponential
time the chain jumps to the entropic set. Once at the entropic set,
the chain immediately jumps to one of the energetic sets with equal
probability, and repeat from there the evolution just
described. Hence, to move from one energetic set to another, the
dynamics first dismantles the spin alignment present in the energetic
set, staying during a negligible amount of time in the entropic set,
and then, almost instantaneously, rebuild a new alignment which can
coincide with the one existing before the visit to the entropic set.

Finally, in the temperature range $(0,T_1)$, the entropic set
disappears and only the three energetic sets remain. As in the
temperature range $(T_1,T_2)$, the evolution among these sets can be
described by a $3$-state Markov chain whose graph is the complete
graph. The difference with the previous case is that the three saddle
points of the potential separate here the energetic sets, while in the
previous case these saddle points separate the energetic sets from the
entropic set.

A perturbative argument permits to extend the previous analysis to the
case in which the external field is small. In this case, of course,
the external field breaks the symmetry among the energetic sets, and
one or two of them may be favored. Besides this fact, the qualitative
behavior of the dynamics is similar to the one without external field.

The analysis of the metastable behavior of a random walk in a
potential field proposed in \cite{BEGK1, BEGK2, LMT, LS} relies on the
identification of the critical points of the potential and on the
characterization of the eigenvalues of the Hessian of the potential at
the critical points. It is not possible, in general, to obtain
explicit expressions for the critical points of the potential induced
by the non-reversible, mean-field Potts model. For this reason a global
rigorous investigation of the metastable behavior with non small
external magnetic field is not possible. However, in the case where
the direction of the magnetic field points in the direction or opposite direction
of one of the three possible values of the spins, a complete
description of the metastable behavior of the Potts model is possible.
This is presented in the last section of the article, as well as some
phenomenon not observed at zero external field which are supported by
numerical computations.

\section{Model and Results}
\label{sec2}

\subsection{Mean-field Potts Model }

Let $\mc{S} = \{\bs{v}_{0},\, \bs{v}_{1},\, \bs{v}_{2}\}$ be the set
of spins, where $\bs{v}_{k}=\left(\cos(2\pi k/3),\,\sin(2\pi
  k/3)\right)$, $0\le k\le2$, and let $T_{N}=\{1,\,2,\,\dots,\,N\}$, $N\in \mathbb{N}$ be 
the set of sites. The configuration space, represented by
$\Omega_{N}$, is the set $\mc{S}^{T_{N}}$.  Denote by
$\sigma=(\sigma_{1},\sigma_{2},\dots,\sigma_{N})$ the configurations
of $\Omega_{N}$, where $\sigma_{i}\in\mc{S}$, $i\in T_{N}$, is the
spin at the $i$-th site of $\sigma$. The Hamiltonian
$\mathbb{H}_{N}:\Omega_{N}\rightarrow\mathbb{R}$ is defined by
\begin{equation}
\label{ham1}
\mathbb{H}_{N}(\sigma)\;=\; -\, \frac{1}{2N}\sum_{1\le i,j\le N}
\sigma_{i}\cdot\sigma_{j} \;-\; \sum_{i=1}^{N} \bs{h}_{\text{e}}\cdot\sigma_{i}
\;=\;-\frac{N}{2} \, \Big|\frac{1}{N}\sum_{i=1}^{N}
\sigma_{i}\Big|^{2} \;-\; \bs{h}_{\text{e}}\cdot \sum_{i=1}^{N}\sigma_{i}\;,
\end{equation}
where $\bs{h}_{\text{e}} = (r_{\text{e}}\cos\theta_{\text{e}},
\,r_{\text{e}}\sin\theta_{\text{e}})$ stands for an external magnetic
field, and $\bs x \cdot \bs y$ for the scalar product between $\bs x$
and $\bs y\in\bb R^2$. Here, $r_{\text{e}} \ge0$ and
$0\le\theta_{\text{e}}<2\pi$ represent the magnitude and the angle of
the external field, respectively. The model associated to this
mean-field type Hamiltonian is known as the mean-field Potts Model
\cite{Pot}. We refer to the review paper \cite{Wu} for an introduction
on Potts model.

Fix $\beta>0$ and denote by $\mu_{\beta}^{N}$ the Gibbs measure
associated to the Hamiltonian $\mathbb{H}_{N}$ at the inverse
temperature $\beta$:
\begin{equation}
\label{gibbs}
\mu_{\beta}^{N}(\sigma)\;=\;\frac{3^{-N}}{Z_{N}(\beta)}
e^{-\beta\mathbb{H}_{N}(\sigma)}\;\;;\;\sigma\in\Omega_N\;,
\end{equation}
where $Z_{N}(\beta)$ is the partition function defined by 
\begin{equation*}
Z_{N}(\beta)\;=\;3^{-N}\sum_{\sigma\in\Omega_{N}}e^{-\beta\mathbb{H}_{N}(\sigma)}
\end{equation*}
so that $\mu_{\beta}^{N}$ is a probability measure on $\Omega_{N}$.

\subsection{Spin Dynamics}

A natural dynamics for the Potts model introduced in the previous
section is the one in which spins are allowed to jump only in one
direction, say the counter-clockwise one: $\bs{v}_{k}
\rightarrow\bs{v}_{k+1}$, $0\le k\le2$, where summation in the
subscript is performed modulo $3$.  Denote by $\mathscr{R}:\mc
S\rightarrow \mc S$ the counter-clockwise rotation on $\mc S$, i.e.,
$\mathscr{R}(\bs{v}_k)=\bs{v}_{k+1}$ for $0\le k\le 2$, and denote by
$\tau_{i}\,\sigma$, $i\in T_{N}$, the configuration obtained from
$\sigma$ by rotating counter-clockwise the $i$-th spin by an angle of
$(2\pi/3)$, namely,
\begin{equation*}
(\tau_{i}\,\sigma)_{j}\;=\;\mathscr{R}(\sigma_{j})\,\mathbf{1}\{j=i\}+\sigma_{j}\,\mathbf{1}\{j\neq i\}\;.
\end{equation*}

Denote by $\ms L_N$ the generator which acts on functions
$f:\Omega_{N}\rightarrow\mathbb{R}$ as
\begin{equation}
\label{gen1}
(\mathcal{L}_{N}f)(\sigma)\;=\;\frac{1}{N}\sum_{i=1}^{N}
c_{i}(\sigma)\, \{f(\tau_{i}\sigma)-f(\sigma)\}\;,
\end{equation}
where $c_{i}(\sigma)$, $i\in T_{N}$, is the jump rate given by
\begin{equation}
\label{rate1}
c_{i}(\sigma)\;=\;\exp\Big\{ -\frac{\beta}{3}\sum_{k=0}^{2}
\left[\mathbb{H}_{N}(\tau_{i}^{(k)}\sigma)-
\mathbb{H}_{N}(\sigma)\right]\Big\} \;,
\end{equation}
and where $\tau_{i}^{(k)}$, $k\ge 0$, stands for the $k$-th iterated
of the operator $\tau_{i}$.  These jump rates were chosen for
$\mu_{\beta}^{N}$ to be the stationary state.

Denote by $\sigma(t)=(\sigma_{1}(t),\cdots,\sigma_{N}(t))$, $t\ge0$,
the continuous-time Markov chain on $\Omega_{N}$ generated by
$\mathscr{L}_{N}$.  Note that $\sigma(t)$ is non-reversible with
respect to $\mu_{N}^{\beta}$ because of the cyclic nature of the
dynamics.

\subsection{Metastability}

Denote by $\mf{m}_{N}(\sigma)$ the magnetization of the configuration
$\sigma\in\Omega_{N}$: 
\begin{equation*}
\mf{m}_{N}(\sigma)\;=\;\frac{1}{N}\sum_{i=1}^{N}\sigma_{i}\;.
\end{equation*}
In this article, we investigate the metastable behavior of the the
magnetization $\mf{m}_{N}(t) := \mf{m}_{N}(\sigma(t))$ under the
dynamics defined by (\ref{gen1}).

Note that the Hamiltonian (\ref{ham1}) can be represented in terms
of the magnetization:
\begin{equation}
\label{ham2}
\mathbb{H}_{N}(\sigma)\;=\; \, N \Big[- \frac{1}{2}
\left|\mf{m}_{N}(\sigma)\right|^{2} \;-\;
\bs{h}_{\text{e}}\cdot\mf{m}_{N}(\sigma)\Big]\;,
\end{equation}
and that the rotation rate $c_{i}(\sigma)$ is represented only in
terms of the Hamiltonian. Thereby, the process $\mf{m}_{N}(t)$ is
itself a continuous-time Markov chain on $\mathbb{R}^{2}$ and inherits
the non-reversibility from the underlying spin dynamics.

It has been observed in \cite{CGOV, BEGK2} that the magnetization of
the mean-field, Curie-Weiss model exhibits a metastable behavior at
low temperatures due to the competition between entropy and
energy. The mean-field, Potts model considered in this article can be
regarded as a generalization of the Curie-Weiss model, and exhibits an
analogous metastable behavior at low temperatures. 

A complete analysis of the metastable behavior in the case where there
is no external field is presented in Section \ref{sec4}. As mentioned
in the introduction, there exist in this case three critical inverse
temperatures $\beta_{1}>\beta_{2}>\beta_{3}$, where $\beta_i$ stands
for the inverse of the temperature $T_i$ referred to in Section
\ref{sec1}. While $\beta_{1}=2$, numerical computations give that
$\beta_{2}\thickapprox 1.8484$ and $\beta_{3}\thickapprox 1.8304$. At
each of these critical temperatures a qualitative modification of the
metastable behavior is observed.

The article is organized as follows. In Section \ref{sec3}, we show
that the evolution of the magnetization is described by a random walk
evolving in a potential field defined in a two-dimensional simplex. In
Section \ref{sec4}, we describe all different metastable regimes in
the case of zero-external field, following the martingale approach of
\cite{BL1, BL2, BL3} and based on the recent work \cite{LS}. In Section
\ref{sec5}, by a perturbative argument, we extend these results to the
case of a small external field, and we present new phenomena which
occur when there is a large external field.

\section{Reduction to a Cyclic Random Walk in a Potential Field}
\label{sec3}

We examine in this section the dynamics of the magnetization $\mf
m_N(t)$. We show that it evolves according to a non-reversible random
walk in a potential field. 

Denote by $r_{N}^{k}(\sigma)$, $0\le k\le2$, the ratio of sites
of $\sigma\in\Omega_N$ whose spin is equal to $\bs{v}_{k}$: 
\begin{equation*}
r_{N}^{k}(\sigma)\;=\;\frac{1}{N}\sum_{i=1}^{N}
\mathbf{1}\{\sigma_{i}=\bs{v}_{k}\}\;.
\end{equation*}
Clearly, for all configurations $\sigma$, $\sum_{0\le k\le 2}
r_{N}^{k}(\sigma) =1$. For this reason, denote by $\Xi$ the
two-dimensional simplex given by
\begin{equation*}
\Xi\;=\;\{\bs{x}=(x_{1},x_{2}):x_{1},\,x_{2}\ge0,\,x_{1}+
x_{2}\le1\}\subset\mathbb{R}^{2}\;,
\end{equation*}
and let $\Xi_{N}$ be the discretization of $\Xi$:
$\Xi_N=\Xi\cap(\mathbb{Z}/N)^2$. A point $(x_{1},x_{2})$ in $\Xi$ or
$\Xi_N$ is represented as $\bs{x}=(x_{1},x_{2})$, and for a point
$(x_{1},x_{2})$, $x_{0}$ stands for $1-x_{1}-x_{2}$.

Let $\bs{r}_{N}(\sigma)=(r_{N}^{1}(\sigma),\,r_{N}^{2}(\sigma)) \in
\Xi_N$. An elementary computation shows that the magnetization can be
expressed in terms of $\bs{r}_{N}(\sigma)$ as
\begin{equation}
\label{mr}
\mf{m}_{N}(\sigma)\;=\;\psi(\bs{r}_{N}(\sigma))\;,
\end{equation}
where $\psi:\Xi\rightarrow\mathbb{R}^{2}$ is defined by
\begin{equation}
\label{psi}
\psi(\bs{x})\;=\;(2x_{1}+x_{2}-1)\bs{v}_{1}+(x_{1}+2x_{2}-1)\bs{v}_{2}\;,
\end{equation}
which is a bijection between $\Xi_{N}$ and $\psi(\Xi_{N})$. Figure
\ref{fig1} illustrates this bijective relation.

\begin{figure}
  \protect
\includegraphics[scale=0.05]{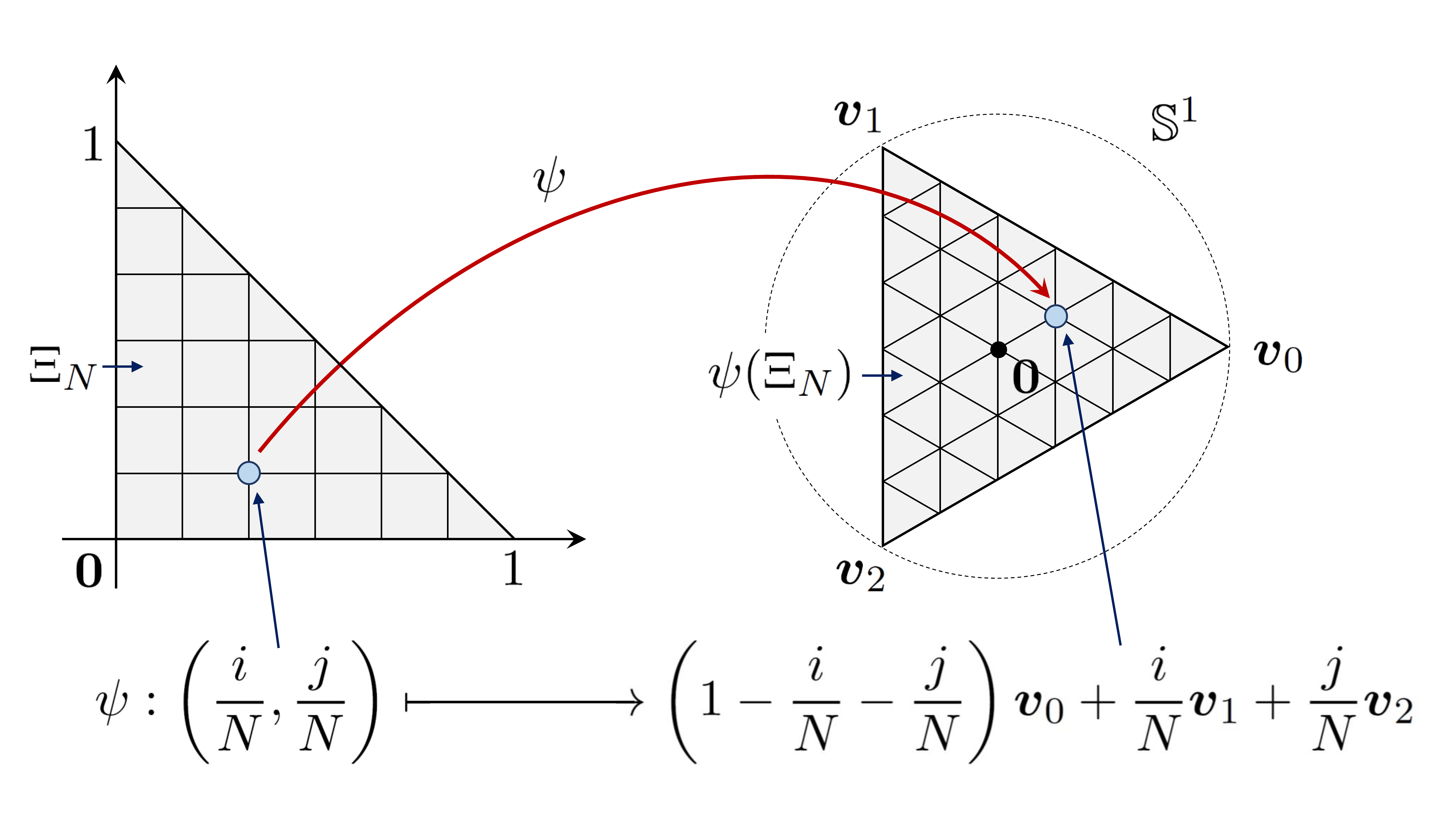}\protect
  \caption{\label{fig1}The bijective relation between
    $\Xi_{N}$ and $\psi(\Xi_{N})$.  The set $\psi(\Xi_{N})$ consists
    of the triangular lattice points of the equilateral triangle
    determined by the three vertices $\bs{v}_{0}$, $\bs{v}_{1}$ and
    $\bs{v}_{2}$.}
\end{figure}

Since $\psi$ is a bijection, to investigate the metastable behavior of
the magnetization $\mf{m}_{N}(t)$, it suffices to examine the
evolution of $\bs{r}_{N}(t):=\bs{r}_{N}(\sigma(t))$. 

\noindent\textbf{The dynamics of $\bs{r}_{N}(t)$}. 
As $\psi$ is a bijection, $\bs{r}_{N}(t)$ inherits the Markov property
from $\mf{m}_{N}(t)$.  We first consider the stationary state of the
dynamics.

By (\ref{ham2}) and (\ref{mr}), the Hamiltonian $\bb H_{N}(\sigma)$
can be written as
\begin{equation}
\label{ham3}
\mathbb{H}_{N}(\sigma)\;=\;NH(\bs{r}_{N}(\sigma))\;,\;\;\;
H(\bs{x})\;=\;-\frac{1}{2}\left|\psi(\bs{x})
\right|^{2}-\bs{h}_{\text{e}}\cdot\psi(\bs{x})\;.
\end{equation}
Hence, the invariant measure of the chain $\bs{r}_{N}(t)$, denoted by
$\nu_{\beta}^{N}$, can be derived from (\ref{gibbs}) and
(\ref{ham3}). More precisely, for $\bs{x}\in\Xi_{N}$, 
\begin{equation}
\label{inv1}
\nu_{\beta}^{N}(\bs{x}) \;=\;
\sum_{\sigma:\bs{r}_{N}(\sigma)=\bs{x}}
\frac{3^{-N}}{Z_{N}(\beta)}\,\exp\{-\beta\mathbb{H}_{N}(\sigma)\}\;.
\end{equation}
Therefore, by straightforward computations and Stirling's formula, 
\begin{equation}
\label{inv2}
\nu_{\beta}^{N}(\bs{x})\;=\;\frac{1}{\widehat{Z}_{N}(\beta)}\,
\exp\{-\beta NF_{\beta,N}(\bs{x})\}\;,
\end{equation}
where $\widehat{Z}_{N}(\beta)=2\pi NZ_{N}(\beta)$ is the partition
function, and where the potential $F_{\beta,N}(\cdot)$ is given by
\begin{equation}
\label{pot1}
F_{\beta,N}(\bs{x})\;=\;F_{\beta}(\bs{x}) \;+\; \frac{1}{N}\,
G_{\beta,N}(\bs{x})\;.
\end{equation}
In this equation,
\begin{equation}
\label{pot2}
F_{\beta}(\bs{x})\;=\;H(\bs{x}) \;+\; \frac{1}{\beta}S(\bs{x})
\;, \quad G_{\beta,N}(\bs{x})\;=\;
\frac{\log(x_{0}x_{1}x_{2})}{2\beta} \;+\; O(N^{-1})\;,
\end{equation}
and $S$ is the entropy function defined by
\begin{equation}
\label{ent}
S(\bs{x})\;=\;\sum_{i=0}^{2}x_{i}\thinspace\log(3x_{i})\;.
\end{equation}
In these equations we used the convention that $\log0=-\infty$ and
that $e^{-\infty} =0$. Moreover, $G_{\beta,N} \rightarrow G_{\beta}$
uniformly on every compact subsets of $\mbox{int}(\Xi)$, where
$G_{\beta}(\bs{x})= \log(x_{0}x_{1}x_{2}) /(2\beta)$.  \smallskip

To examine the dynamics of the chain $\bs{r}_{N}(t)$, denote by
$\bs{e}_{1}=(1,0)$, $\bs{e_{2}}=(0,1)$ the canonical basis of
$\mathbb{R}^{2}$, set $\bs{e}_{0}=(0,0)$, and let
$\bs{e}_{k}^{N}=N^{-1}\bs{e}_{k}$, $0\le k\le2$.  Recall that we
denote by $\bs{v}_{0}$, $\bs{v}_{1}$, $\bs{v}_{2}$ the three values a
spin may assume and that the dynamics allows only jumps from $\bs v_k$
to $\bs v_{k+1}$, $0\le k\le 2$. A jump of spin from $\bs v_0$ to $\bs v_{1}$ corresponds to that of the chain $\bs{r}_{N}(t)$ from $\bs x$ to
$\bs{x}+\bs{e}_{1}^{N}$. Since there are $Nx_{0}$ sites whose spin is
$\bs{v}_{0}$, in view of \eqref{rate1} and \eqref{ham3}, the rate
at which the chain $\bs{r}_{N}(t)$ jumps from $\bs x$ to
$\bs{x}+\bs{e}_{1}^{N}$, denoted by $R_{N}(\bs{x},\bs{x} +
\bs{e}_{1}^{N})$, is given by
\begin{equation*}
R_{N}(\bs{x},\bs{x}+\bs{e}_{1}^{N})\;=\;
x_{0}\exp\left\{ -N\beta(\overline{H}(\bs{x})-H(\bs{x}))\right\} \;,
\end{equation*}
where 
\begin{equation*}
\overline{H}(\bs{x}) \;=\; \frac 13 \big\{ H(\bs{x}) + 
H(\bs{x} + \bs{e}_{1}^{N}) + H(\bs{x}+\bs{e}_{2}^{N}) \big\}\;.
\end{equation*}

Similarly, a jump from $\bs v_1$ to $\bs v_2$ corresponds to a jump of
the chain $\bs{r}_{N}$ from $\bs x$ to
$\bs{x}-\bs{e}_{1}^{N}+\bs{e}_{2}^{N}$, while a jump from $\bs v_2$ to
$\bs v_0$ corresponds to a jump from $\bs x$ to
$\bs{x}-\bs{e}_{2}^{N}$. The rates can be computed easily and are
given by
\begin{align*}
& R_{N}(\bs{x},\bs{x}-\bs{e}_{1}^{N}+\bs{e}_{2}^{N}) \;=\; 
(x_{1}+N^{-1})\, \exp\left\{
  -N\beta(\overline{H}(\bs{x}-\bs{e}_{1}^{N})
-H(\bs{x}))\right\} \;,\\
& \quad R_{N}(\bs{x},\bs{x}-\bs{e}_{2}^{N}) \;=\;  
(x_{2}+N^{-1})\, \exp\left\{ -N\beta(\overline{H}
(\bs{x}-\bs{e}_{2}^{N})-H(\bs{x}))\right\} \;.
\end{align*}
Hence, the generator $\mc{L}_{N}$ of the Markov chain $\bs{r}_{N}(t)$
is given by
\begin{equation}
\label{gen3}
\begin{aligned}
(\mc{L}_{N}f) (\bs{x}) \; & = \;R_{N}(\bs{x},\bs{x}+\bs{e}_{1}^{N})
\left[f(\bs{x}+\bs{e}_{1}^{N})-f(\bs{x})\right] \\
&  +\; R_{N}(\bs{x},\bs{x}-\bs{e}_{1}^{N}+\bs{e}_{2}^{N})
\left[f(\bs{x}-\bs{e}_{1}^{N}+\bs{e}_{2}^{N})-f(\bs{x})\right] \\
&  +\; R_{N}(\bs{x},\bs{x}-\bs{e}_{2}^{N})
\left[f(\bs{x}-\bs{e}_{2}^{N})-f(\bs{x})\right]\;.
\end{aligned}
\end{equation}

Denote by $\mathbb{P}_{\bs{x}}^{N}$ the law of the Markov chain
$\bs{r}_{N}(t)$ starting from $\bs{x}\in\Xi_{N}$ and by
$\mathbb{E}_{\bs{x}}^{N}$ the associated expectation.

\smallskip
\noindent\textbf{Cyclic random walks in a potential
field}. 
Let $\gamma^{N}$ be the cycle $(\bs{e}_{0}^{N}, \bs{e}_{1}^{N},
\bs{e}_{2}^{N}, \bs{e}_{0}^{N})$ on $(\mathbb{Z}/N)^{2}$, and denote by
$\gamma_{\bs{x}}^{N}$ the cycle $\gamma^{N}$ translated by $\bs x\in
(\mathbb{Z}/N)^{2}$, i.e., $\gamma_{\bs{x}}^{N} =\bs{x} +\gamma^{N}$. Let
$\widehat{\Xi}_{N}$ be the set defined by
\begin{equation*}
\widehat{\Xi}_{N} \;=\;
\{\bs{x}\in\Xi_{N}:\gamma_{\bs{x}}^{N}\subset\Xi_{N}\}
\;=\; \{\bs{x}\in\Xi_{N}:x_{1}+x_{2}\le1-N^{-1}\}\;.
\end{equation*}

Denote by $\mc{L}_{N,\bs{x}}$, $\bs{x}\in\widehat{\Xi}_{N}$, the cycle
generator on $\gamma_{\bs{x}}^{N}$ given by
\begin{equation*}
(\mc{L}_{N,\bs{x}}f)(\bs{x}+\bs{e}_{i}^{N})\;=\;
\widetilde{R}_{N}(\bs{x}+\bs{e}_{i}^{N},\bs{x}+\bs{e}_{i+1}^{N})
\left[f(\bs{x}+\bs{e}_{i+1}^{N})-f(\bs{x}+\bs{e}_{i}^{N})\right]\;,
\end{equation*}
for $0\le i\le 2$, where the jump rate $\widetilde{R}_{N}$ is given by
\begin{equation}
\label{jump1}
\begin{aligned}
\widetilde{R}_{N}(\bs{x}+\bs{e}_{i}^{N},\bs{x}+\bs{e}_{i+1}^{N})
\;=\;\exp\{-\beta N[\,\overline{F}_{\beta,N}(\bs{x})-
F_{\beta,N}(\bs{x}+\bs{e}_{i}^{N})]\} \;,
\end{aligned}
\end{equation}
and
\begin{equation*}
\overline{F}_{\beta,N}(\bs{x}) \;=\; \frac 13
\big\{F_{\beta,N}(\bs{x}+\bs{e}_{0}^{N})
+F_{\beta,N}(\bs{x}+\bs{e}_{1}^{N})
+F_{\beta,N}(\bs{x}+\bs{e}_{2}^{N}) \big\}\;.
\end{equation*}
An elementary computation shows that for $\bs{x}\in\widehat{\Xi}_{N}$,
$0\le i\le 2$,
\begin{equation}
\label{jump2}
R_{N}(\bs{x}+\bs{e}_{i}^{N},\bs{x}+\bs{e}_{i+1}^{N})
\;=\;w_{N}(\bs{x})\, \widetilde{R}_{N}(\bs{x}+\bs{e}_{i}^{N},
\bs{x}+\bs{e}_{i+1}^{N})\;,
\end{equation}
where the weights $w_{N}(\bs{x})$, $\bs{x}\in\widehat{\Xi}_N$, are given by
\begin{equation}
\label{weight}
w_{N}(\bs{x})\;=\;\left[x_{0}\left(x_{1}+\frac{1}{N}\right)
\left(x_{2}+\frac{1}{N}\right)\right]^{\frac{1}{3}}\;.
\end{equation}
Hence, the generator $\mc{L}_{N}$ defined in (\ref{gen3}) can be
represented in terms of the cycle generators $\mc{L}_{N,\bs{x}}$, $\bs{x}\in\widehat{\Xi}_N$, as
\begin{equation}
\label{cd1}
\mc{L}_{N}\;=\;\sum_{\bs{x}\in \widehat{\Xi}_N} w_{N}(\bs{x})\,
\mc{L}_{N,\bs{x}}\;. 
\end{equation}
Note that the weight function $w_{N}(\bs{x})$ converges to
$w(\bs{x})=(x_{0}x_{1}x_{2})^{1/3}$ uniformly on every compact subsets
of $\mbox{int}(\Xi)$.  Thereby, the Markov chain $\bs{r}_{N}(t)$ is a
special case of the model considered in Remark 2.9 of \cite{LS}.

\smallskip \noindent\textbf{The potential $F_{\beta}$.} The global
structure of the inter-valley dynamics is essentially related to the
potential $F_{\beta}$ defined in (\ref{pot2}). 

Denote by $\partial_{x_i} F_\beta$, $i=1,\,2$, the partial derivative of
$F_{\beta}$  with respect to $x_i$. We have that
\begin{equation}
\label{der1}
\begin{aligned}
 &   (\partial_{x_1}F_{\beta})(\bs{x})\;=\;-\, \frac{3}{2}(x_{1}-x_{0}) \;+\;
\frac{1}{\beta}\log\frac{x_{1}}{x_{0}} \;-\;
r_{\text{e}}\left[\cos\left(\theta_{\text{e}} - \frac{2\pi}{3}\right) -
\cos\theta_{\text{e}}\right]\;, \\
 &   (\partial_{x_2}F_{\beta})(\bs{x})\;=\;-\, \frac{3}{2}(x_{2}-x_{0})
 \;+\; \frac{1}{\beta}\log\frac{x_{2}}{x_{0}} \;-\; r_{\text{e}}
\left[\cos\left(\theta_{\text{e}}-\frac{4\pi}{3}\right)-
\cos\theta_{\text{e}}\right]\;.
\end{aligned}
\end{equation}
Therefore, a point $\bs x \in \Xi$ is a critical point of $F_\beta$ if
and only if
\begin{equation}
\label{critical}
\frac{1}{\beta}\log x_{k} \;-\; \frac{3}{2}x_{k} \;-\; 
r_{\text{e}}\, \cos\left(\theta_{\text{e}}-\frac{2k\pi}{3}\right)
\end{equation}
are equal for $k=0$, $1$, $2$.

The Hessian of $F_{\beta}$, denoted by $\nabla^{2}F_{\beta}$, is given by
\begin{equation}
\label{hess}
(\nabla^{2}F_{\beta}) (\bs{x})\;=\;
\begin{pmatrix}\frac{1}{\beta x_{0}}+\frac{1}{\beta x_{1}}-3 & 
\frac{1}{\beta x_{0}}-\frac{3}{2}\\
\frac{1}{\beta x_{0}}-\frac{3}{2} & 
\frac{1}{\beta x_{0}}+\frac{1}{\beta x_{2}}-3
\end{pmatrix}\;.
\end{equation}

\section{Zero External Magnetic Field}
\label{sec4}

We examine in this section the metastable behavior of the Potts model
under the assumption that the magnetic field vanishes:
$\bs{h}_{\text{e}}=\bs{0}$.

\subsection{Structure of Valleys}
\label{sec41}

To describe the valleys of the potential $F_{\beta}$, we first
identify in Proposition \ref{pro1} below all critical points of
$F_{\beta}$. 

For a point $\bs{x}$ at the boundary of $\Xi$, let $\bs{n}(\bs{x})$ be
the exterior normal vector at $\bs{x}$ with respect to the domain
$\Xi$. By \eqref{der1}, 
\begin{equation}
\label{01f}
\nabla F_\beta (\bs{x})\cdot \bs{n}(\bs{x}) \;=\;\infty \;, 
\end{equation}
with the convention that $\log 0 =-\infty$. In particular,
$F_\beta$ does not have minima at the boundary and the global minimum
is attained in the interior of $\Xi$, at some local minima.

According to the condition (\ref{critical}), a point
$\bs{x}$ is a critical point of $F_{\beta}$ if and only if
\begin{equation}
\label{cr1}
\frac{1}{\beta} \log x_{0}-\frac{3}{2} x_{0}\;=\;
\frac{1}{\beta} \log x_{1}-\frac{3}{2} x_{1}\;=\;
\frac{1}{\beta} \log x_{2}-\frac{3}{2} x_{2}\;.
\end{equation}
In particular, $\bs{p}=(1/3,\,1/3)$ is a critical point, which
corresponds to the configuration in which one third of the sites takes
the value $\bs v_k$ for $k=0$, $1$, $2$. This point is stable only at
high temperature, when the entropy plays an important role. This is
the content of the next lemma.

\begin{lemma}
\label{prop1}
The point $\bs{p}$ is a local minima of $F_{\beta}$ for $\beta<\beta_1
:= 2$, and a local maxima of $F$ for $\beta>\beta_1$.
\end{lemma}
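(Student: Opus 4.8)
The plan is a direct application of the second-derivative test at $\bs p$, using the explicit Hessian \eqref{hess}. Since we are in the zero-field regime ($r_{\text{e}}=0$), the point $\bs p=(1/3,1/3)$ is already a critical point, because the three quantities in \eqref{cr1} coincide when $x_0=x_1=x_2=1/3$. It therefore remains only to determine the signature of $(\nabla^2 F_\beta)(\bs p)$ as a function of $\beta$.

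First I would substitute $x_0=x_1=x_2=1/3$ into \eqref{hess}. This gives $1/(\beta x_k)=3/\beta$ for each $k$, so the Hessian at $\bs p$ becomes the symmetric matrix with common diagonal entry $a := 6/\beta-3$ and common off-diagonal entry $b := 3/\beta-3/2$. The decisive structural observation is that $b=a/2$, so the matrix has the special form $\begin{pmatrix} a & a/2 \\ a/2 & a \end{pmatrix}$, whose eigenvectors are $(1,1)$ and $(1,-1)$, independently of $\beta$.

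Next I would read off the two eigenvalues as $a+b=(9/2)(2/\beta-1)$ and $a-b=(3/2)(2/\beta-1)$. Both are positive multiples of $2/\beta-1=(2-\beta)/\beta$, hence share the sign of $2-\beta$. Consequently, for $\beta<2$ both eigenvalues are strictly positive, so $(\nabla^2 F_\beta)(\bs p)$ is positive definite and $\bs p$ is a strict local minimum; for $\beta>2$ both are strictly negative, so the Hessian is negative definite and $\bs p$ is a strict local maximum. Equivalently, one may argue through $\det=(27/4)(2/\beta-1)^2\ge 0$ together with $\mathrm{trace}=12/\beta-6$, whose sign again switches precisely at $\beta=2$.

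There is essentially no obstacle here: the entire content is the sign analysis of a single $2\times 2$ matrix, and the special proportionality $b=a/2$ makes the eigenvalues factor cleanly through the single quantity $2/\beta-1$. The only delicate point is the boundary case $\beta=\beta_1=2$, where both eigenvalues vanish and the Hessian degenerates to the zero matrix, rendering the second-derivative test inconclusive. The lemma, however, asserts nothing at $\beta=2$ -- it treats only $\beta<2$ and $\beta>2$ -- so this degeneracy, which marks exactly the critical temperature at which $\bs p$ exchanges stability, lies outside the scope of the statement and need not be analyzed.
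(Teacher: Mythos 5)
Your proposal is correct and follows essentially the same route as the paper: both compute the Hessian at $\bs p$ from \eqref{hess} and read off its definiteness, the paper by factoring it as $\frac{3(2-\beta)}{2\beta}\begin{pmatrix} 2 & 1\\ 1 & 2 \end{pmatrix}$ (display \eqref{hessp}) and you by diagonalizing along $(1,1)$ and $(1,-1)$, which is the same sign analysis in different clothing. Your explicit remark that the test degenerates at $\beta=2$ (where the Hessian vanishes, consistent with Proposition \ref{pro1}(4)) is accurate and harmless, since the lemma makes no claim there.
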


\begin{proof}
We have already seen that $\bs{p}$ is a critical point of
$F_{\beta}$ regardless of $\beta$. By (\ref{hess})
the Hessian of $F$ at $\bs{p}$ is given by 
\begin{equation}
\label{hessp}
(\nabla^{2}F_{\beta}) (\bs{p})\;=\;
\frac{3(2-\beta)}{2\beta}\,
\begin{pmatrix}
2 & 1\\
1 & 2
\end{pmatrix}\;.
\end{equation}
The statement of the lemma follows from this expression. 
\end{proof}

Clearly, for each fixed $\beta>0$, $k>0$, the equation
$\frac{1}{\beta}\log x -\frac{3}{2} x=k$ has at most two positive real
solutions. Therefore, any point $\bs x = (x_0,x_1,x_2)$ which
satisfies (\ref{cr1}) must have two equal coordinates. Let $t$ be the
common value of two coordinates. Since the total sum is $1$, $t<1/2$
and the third value is $1-2t$. By (\ref{cr1}), $t$ satisfies the
equation
\begin{equation*}
\frac{1}{\beta} \log t -\frac{3}{2} t\;=\;\frac{1}{\beta} \log(1-2t)-\frac{3}{2}(1-2t)\;.
\end{equation*}
This equation can be rewritten as $f_{0}(t)=\beta$, where the function
$f_{0}:(0,1/2)\rightarrow\mathbb{R}$ is defined by
\begin{equation*}
f_{0}(t)\;=\;\begin{cases}
\frac{2}{3(1-3t)}\log\frac{1-2t}{t} & \mbox{if \ensuremath{t\neq1/3}}\\
2 & \mbox{if }t=1/3\;.
\end{cases}
\end{equation*}
The graph of $f_{0}$ is presented in Figure \ref{fig2}. Denote by
$m_{0}$ the point at which $f_{0}$ achieves its minimum, and let
$\beta_{3} =f_{0}(m_{0})$. The respective numerical values of $m_{0}$
and $\beta_{3}$ are approximately $0.2076$ and $1.8304$.

\begin{figure}
\includegraphics[scale=0.051]{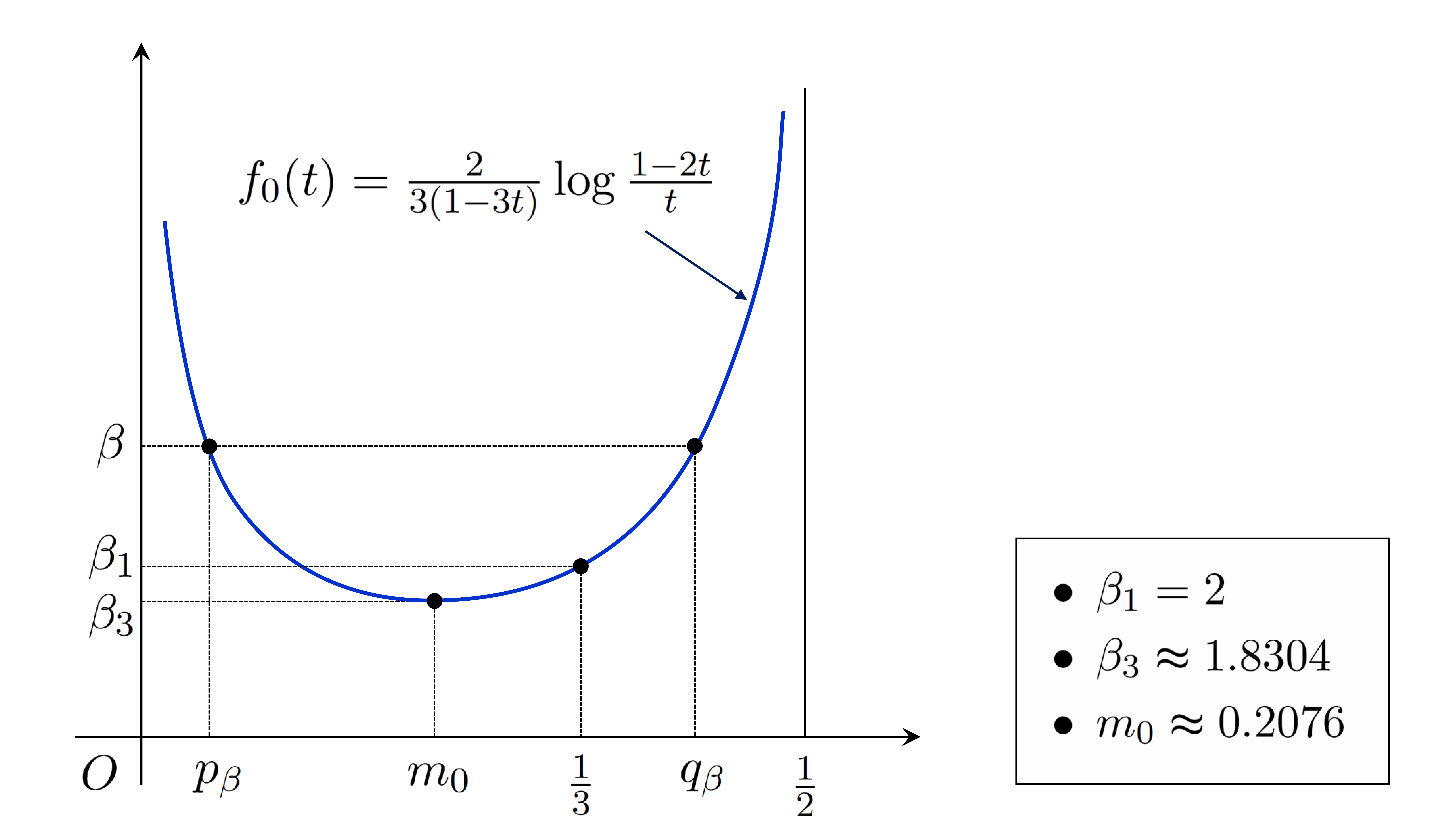}\protect
\caption{\label{fig2}The graph of $f_{0}(t)=\frac{2}{3(1-3t)}\log\frac{1-2t}{t}$
and the critical temperature $\beta_{3}=f(m_{0})$. The equation $f_{0}(t)=\beta$
has two solutions $p_{\beta},\,q_{\beta}$ provided that $\beta>\beta_{3}$,
where $p_{\beta}<m_{0}<q_{\beta}$. }
\end{figure}

By definition of $\beta_3$, for $\beta<\beta_3$, the equation
$f_0(t)=\beta$ has no solutions. On the other hand, for
$\beta>\beta_{3}$, this equation has two solutions, denoted by
$p_{\beta}<m_{0}<q_{\beta}$. For $\beta=\beta_3$, define
$p_\beta=q_\beta=m_0$.  In consequence, the triples
$(1-2p_{\beta},p_{\beta},p_{\beta})$,
$(1-2q_{\beta},q_{\beta},q_{\beta})$, and all triples obtained from
these two by permuting the coordinates, solve the equation
\eqref{cr1}.

These points correspond to critical points of $F_{\beta}$. For
$\beta>\beta_{3}$, let
\begin{equation}
\label{01}
\begin{aligned}
&  \bs{m}_{0}^{\beta}\;=\;(p_{\beta},p_{\beta})\;, \quad
\bs{m}_{1}^{\beta}\;=\;(1-2p_{\beta},p_{\beta})\;, \quad
\bs{m}_{2}^{\beta}\;=\;(p_{\beta},1-2p_{\beta})\;,\\
&\quad \bs{\sigma}_{0}^{\beta}\;=\;(q_{\beta},q_{\beta})\;, \quad
\bs{\sigma}_{1}^{\beta}\;=\;(1-2q_{\beta},q_{\beta})\;, \quad
\bs{\sigma}_{2}^{\beta}\;=\;(q_{\beta},1-2q_{\beta})\;.
\end{aligned}
\end{equation}
For $\beta=\beta_3$, $\bs{m}_i^\beta = \bs{\sigma}_i^\beta$ for $0\le
i\le2$, and for $\beta=\beta_{1}$, where $\beta_1=2$ has been
introduced in Lemma \ref{prop1}, $q_{\beta}=1/3$ so that
$\bs{\sigma}_{i}^{\beta} = \bs{p}$ for $0\le i\le2$. Up to this point, we figured out all the possible critical points of $F_\beta$ for all $\beta>0$.  

Let $\bs l_i(t)$, $0\le t\le 1/2$, $0\le i\le 2$, be the line given by
\begin{equation}
\label{line1}
\bs l_0(t)=(t,t)\;, \quad \bs l_1(t)=(1-2t,t)\;,
\quad \bs l_2(t)=(t,1-2t)\;.
\end{equation}
These lines correspond to the sets $\{\bs x \in \Xi : x_1=x_2\}$,
$\{\bs x\in \Xi : x_2=x_0\}$ and $\{\bs x \in \Xi: x_0=x_1\}$,
respectively. An elementary computation shows that
\begin{equation}
\label{pq1}
\frac{\textup{d}}{\textup{d}t} F_\beta (\bs{l}_i(t))\;=\; \frac
{3}{\beta} \, (3t-1)\, (f_0 (t)- \beta)
\end{equation}
for $0\le i\le 2$ and $0\le t\le 1/2$. 

A critical point of the potential $F_\beta$ is said to
\textit{degenerate} if the determinant of the Hessian of $F_\beta$ at
that critical point vanishes. Next result characterizes all critical
points of $F_{\beta}$.

\begin{proposition}
\label{pro1}
The critical points of $F_{\beta}$ are given by
\begin{enumerate}
\item For $\beta\in(0,\beta_{3})$, $\bs{p}$ is the unique critical
  point, and $\bs p$ is the global minima.
\item For $\beta=\beta_3$, $\bs{m}_0^\beta$, $\bs{m}_1^\beta$,
  $\bs{m}_2^\beta$ and $\bs{p}$ are the unique critical points.  The
  first three points are degenerate critical points which are not local
  minima, while $\bs{p}$ is the the global minimum.
\item For $\beta\in(\beta_{3},\beta_{1})$, the unique critical points
  are the four local minima $\bs{m}_{0}^{\beta}$, $\bs{m}_{1}^{\beta}$,
  $\bs{m}_{2}^{\beta}$, $\bs{p}$ and the three saddle points
  $\bs{\sigma}_{0}^{\beta},\,\bs{\sigma}_{1}^{\beta},\,\bs{\sigma}_{2}^{\beta}$.
\item For $\beta=\beta_1$, $\bs{m}_0^\beta$, $\bs{m}_1^\beta$,
  $\bs{m}_2^\beta$ and $\bs{p}$ are the unique critical points. The
  first three points are local minima, and $\bs{p}$ is a degenerate
  critical point which is not a local minima.
\item For $\beta\in(\beta_{1},\infty)$, the unique critical points are
  the three global minima $\bs{m}_{0}^{\beta}$, $\bs{m}_{1}^{\beta}$,
  $\bs{m}_{2}^{\beta}$, the three saddle points
  $\bs{\sigma}_{0}^{\beta}$, $\bs{\sigma}_{1}^{\beta}$,
  $\bs{\sigma}_{2}^{\beta}$, and the local maximum $\bs p$.
\end{enumerate}
\end{proposition}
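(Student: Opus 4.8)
The plan is to exploit the $S_3$ symmetry of $F_\beta$ and reduce the classification to a diagonalization of the Hessian (\ref{hess}) along the symmetry axes. The potential is invariant under permutations of the coordinates $(x_0,x_1,x_2)$, and the three lines $\bs l_0,\bs l_1,\bs l_2$ of (\ref{line1}) are precisely the fixed axes of this action. Every critical point found above lies on one of these lines, and the three lines are conjugate under the symmetry, so it is enough to classify the points on $\bs l_0$, namely $\bs m_0^\beta = \bs l_0(p_\beta)$, $\bs\sigma_0^\beta = \bs l_0(q_\beta)$ and $\bs p = \bs l_0(1/3)$; the remaining critical points inherit the same type. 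The enumeration in items (1)--(5) is then read off directly from the number of roots of $f_0(t)=\beta$ (none for $\beta<\beta_3$, the double root $m_0$ at $\beta=\beta_3$, and $p_\beta<m_0<q_\beta$ for $\beta>\beta_3$), together with the collapse $\bs\sigma_i^\beta=\bs p$ at $\beta=\beta_1$, where $q_\beta=1/3$; only the classification into minima, saddles and maxima then remains.

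At a point $\bs l_0(t)=(t,t)$, with $x_0=1-2t$, the matrix (\ref{hess}) has equal diagonal entries, hence the eigenvectors $(1,1)$ and $(1,-1)$, with eigenvalues
\begin{equation*}
\lambda_\parallel \;=\; \frac{2}{\beta(1-2t)}+\frac{1}{\beta t}-\frac{9}{2}\;, \qquad
\lambda_\perp \;=\; \frac{1}{\beta t}-\frac{3}{2}\;.
\end{equation*}
Since $(1,1)$ is tangent to the straight line $\bs l_0$, one has $\tfrac{d^2}{dt^2}F_\beta(\bs l_0(t))=2\lambda_\parallel$; differentiating (\ref{pq1}) and using $f_0(t)=\beta$ at a critical point yields the convenient identity $\lambda_\parallel=\tfrac{3}{2\beta}(3t-1)\,f_0'(t)$ at $t\in\{p_\beta,q_\beta\}$. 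Thus the tangential eigenvalue is governed by $f_0'$, while the normal eigenvalue is simply $g'(t)$, where $g(x)=\tfrac1\beta\log x-\tfrac32 x$ is the unimodal function underlying the critical point equation (\ref{cr1}).

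The signs then fall out as follows. The maximum of $g$ is at $x^\ast=\tfrac{2}{3\beta}$, and (\ref{cr1}) forces $g(t)=g(1-2t)$, so the repeated coordinate $t$ and the distinct coordinate $1-2t$ lie on opposite sides of $x^\ast$; hence $t<x^\ast$ precisely when $t$ is the minority coordinate, i.e. $t<1/3$, giving $\lambda_\perp=g'(t)>0$ for $t<1/3$ and $\lambda_\perp<0$ for $t>1/3$. For the tangential eigenvalue, recall that $f_0$ decreases on $(0,m_0)$ and increases on $(m_0,1/2)$. At $t=p_\beta<m_0<1/3$ both $f_0'(p_\beta)<0$ and $3p_\beta-1<0$, so $\lambda_\parallel>0$; combined with $\lambda_\perp>0$, this shows every $\bs m_i^\beta$ is a local minimum for all $\beta>\beta_3$. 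At $t=q_\beta>m_0$ one has $f_0'(q_\beta)>0$, while $3q_\beta-1<0$ for $\beta\in(\beta_3,\beta_1)$ (where $q_\beta<1/3$) and $3q_\beta-1>0$ for $\beta>\beta_1$ (where $q_\beta>1/3$); in both regimes $\lambda_\parallel$ and $\lambda_\perp$ have opposite signs, so each $\bs\sigma_i^\beta$ is a saddle. The point $\bs p$ is settled directly by (\ref{hessp}): its Hessian is positive definite for $\beta<\beta_1$, negative definite for $\beta>\beta_1$, and vanishes at $\beta=\beta_1$.

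The delicate steps are the two degenerate thresholds, where one eigenvalue vanishes and the Hessian no longer decides the nature of the point. At $\beta=\beta_3$ we have $p_\beta=q_\beta=m_0$ and $f_0'(m_0)=0$, so $\lambda_\parallel=0$ and $\lambda_\perp>0$; to show that $\bs m_i^{\beta_3}$ is not a local minimum I would argue directly from (\ref{pq1}) that, as $f_0(t)-\beta_3\ge0$ with equality only at $m_0$ while $3t-1<0$ near $m_0$, the map $t\mapsto F_\beta(\bs l_0(t))$ is nonincreasing through $m_0$, hence $m_0$ is not even a one-dimensional minimum. At $\beta=\beta_1$, where $\bs\sigma_i^\beta$ has merged into $\bs p$ and the Hessian is identically zero, the same relation shows $t\mapsto F_\beta(\bs l_0(t))$ is strictly increasing through $t=1/3$, so $\bs p$ is a degenerate critical point that is not a local extremum. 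Finally, the global-minimum assertions in (1), (2) and (5) follow by elimination: by (\ref{01f}) the infimum of $F_\beta$ is attained in the interior at a local minimum, and once all interior critical points are classified the only minima remaining are, respectively, $\bs p$ and the equal-valued $\bs m_i^\beta$. I expect these two degenerate thresholds, rather than the routine eigenvalue bookkeeping, to be the main obstacle.
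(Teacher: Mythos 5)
Your proposal is correct, and while its skeleton coincides with the paper's (reduce to the symmetry lines \eqref{line1}, classify the nondegenerate points by the Hessian \eqref{hess}, and settle the two degenerate thresholds $\beta_3$, $\beta_1$ by the one-dimensional monotonicity coming from \eqref{pq1}, exactly as the paper does), the key computation is genuinely different. The paper verifies positivity/negativity of the determinant and trace through the algebraic inequalities \eqref{pp1}, \eqref{qq1}, \eqref{qq2}, proved by substituting $\beta=f_0(p_\beta)$ or $\beta=f_0(q_\beta)$ and studying the auxiliary function $g_0$ of \eqref{g1} via its derivative \eqref{g2}. You instead diagonalize the Hessian along $\bs l_0$ (equal diagonal entries give eigenvectors $(1,\pm1)$) and observe that, at a critical point, differentiating \eqref{pq1} yields $\lambda_\parallel=\tfrac{3}{2\beta}(3t-1)f_0'(t)$; an exact check confirms this identity, since $f_0(t)=\beta$ gives $f_0'(t)=\tfrac{2}{3(1-3t)}\bigl[\tfrac{9\beta}{2}-\tfrac{2}{1-2t}-\tfrac{1}{t}\bigr]$, whence $\tfrac{3}{2\beta}(3t-1)f_0'(t)=\tfrac{2}{\beta(1-2t)}+\tfrac{1}{\beta t}-\tfrac{9}{2}$, which is indeed the second factor of the paper's determinant. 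Your normal eigenvalue argument (the critical-point equation \eqref{cr1} forces $g(t)=g(1-2t)$ for the unimodal $g(x)=\tfrac1\beta\log x-\tfrac32x$, so $t$ and $1-2t$ straddle $x^\ast=\tfrac{2}{3\beta}$) is an equally valid substitute for the paper's use of $\log x<x-1$ and of $g_0$. What your route buys is transparency: the sign of $\lambda_\parallel$ is read off from the monotonicity of $f_0$, and degeneracy at both thresholds becomes automatic ($\lambda_\parallel\propto f_0'$ vanishes exactly at the double root $m_0$, and the Hessian \eqref{hessp} vanishes at $\beta_1$), whereas the paper's inequalities, though less conceptual here, are the template reused later in the external-field analysis (compare \eqref{n001}, \eqref{n002}). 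Two small points you should make explicit: the $S_3$ action is affine but not orthogonal in the $(x_1,x_2)$ chart, so the transferred Hessians are congruent rather than similar --- eigenvalues are not preserved, but by Sylvester's law the signature is, which is all you use; and the enumeration itself (every critical point has two equal coordinates, hence lies on some $\bs l_i$) is borrowed, as in the paper, from the discussion preceding the proposition rather than proved anew.
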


\begin{proof}
Since there is no solution of $f_{0}(t)=\beta$ for $\beta\in(0,\beta_{3})$,
in this temperature range the unique critical point of $F_{\beta}$ is
$\bs{p}$, which is the global minimum of $F_{\beta}$, as claimed in (1).

Assume that $\beta=\beta_3$. By Lemma \ref{prop1}, $\bs{p}$ is a local
minima, and, by the observation next to \eqref{01}, $\bs{m}_i^\beta =
\bs{\sigma}_i^\beta$ for $0\le i\le2$. It remains, therefore, to check
that the critical points $\bs{m}_i^\beta$, $0\le i\le 2$, are
degenerate and are not local minima. 

By \eqref{hess}, the determinant of the Hessian of $F_\beta$ at these
points can be represented as a function of $m_0$, and the degeneracy
is easily shown by using the fact that $m_0$ solves the equation
$f_0'(m_0)=0$.

We now prove that the points $\bs{m}_i^\beta$, $0\le i\le 2$, are not
local minima. Consider the value of $F_\beta$ restricted to the line
$\bs{l}_i(t)$ introduced in \eqref{line1}.  By \eqref{pq1}, since
$m_0<1/3$, $(d/dt) F_\beta(\bs{l}_i(t)) <0$ for $t$ in a neighborhood
of $m_0$, $t\not = m_0$. In particular, $m_0$ is not a local minimum
of $F_\beta(\bs{l}_i(t))$, which proves that $\bs{m}_i^\beta =
\bs{l}_i (m_0)$ is not a local minima of $F_\beta$.  

Assume that $\beta>\beta_3$, $\beta\not = \beta_1$. In view of Lemma
\ref{prop1}, to prove claims (3) and (5), it is enough to show that
the points $\bs{m}_{i}^{\beta}$, $0\le i\le2$, are local minima, and
that the points $\bs{\sigma}_{i}^{\beta}$, $0\le i\le2$, are saddle
points.

Fix $\bs{m}_{i}^{\beta}$, $0\le i\le2$.  We claim that
\begin{equation}
\label{pp1}
\frac{1}{\beta p_{\beta}}\;>\;\frac{3}{2}\;\;\mbox{\;and}\;\;\;
\frac{1}{\beta p_{\beta}}+\frac{2}{\beta(1-2p_{\beta})}\;>\;
\frac{9}{2}\;\cdot
\end{equation}
Replacing $\beta$ by $f_{0}(p_{\beta})$ in the first inequality, it
becomes
\begin{equation*}
\log\frac{1-2p_{\beta}}{p_{\beta}}\;<\;\frac{1-3p_{\beta}}{p_{\beta}}\;,
\end{equation*}
which follows from the elementary inequality $\log x<x-1$ for
$x\neq1$.  For the second inequality of (\ref{pp1}), replace $\beta$
by $f_{0}(p_{\beta})$ to rewrite the inequality as $g_{0}
(p_{\beta})>0$, where
\begin{equation}
\label{g1}
g_{0}(t)\;=\;\left(\log t+\frac{1}{3t}\right) \;-\;
\left(\log(1-2t)+\frac{1}{3(1-2t)}\right)\;.
\end{equation}
The function $g_{0}$ is decreasing in the interval $(0,\,1/4)$ since
\begin{equation}
\label{g2}
g_{0}'(t)\;=\;-\, \frac{(3t-1)(4t-1)}{3t^{2}(1-2t)^{2}}
\end{equation}
is negative in this interval.  Recall that $m_0$ is the point at which
$f_0$ achieves its minimum.  Since $p_{\beta}<m_{0}$, we have that
$g(p_{\beta})>g(m_0)$. From the condition $f_0'(m_0)=0$, it is easy to check that $g_0(m_0)=0$ and this proves \eqref{pp1}.

We turn to the Hessian at $\bs{m}_{i}^{\beta}$, $0\le i\le2$. The
determinant of $(\nabla^{2}F_{\beta})(\bs{m}_{i}^{\beta})$ can be
written as
\begin{equation*}
\left(\frac{1}{\beta p_{\beta}}-\frac{3}{2}\right)
\left(\frac{1}{\beta p_{\beta}}+\frac{2}{\beta(1-2p_{\beta})}-
\frac{9}{2}\right)\;,
\end{equation*}
which is positive by (\ref{pp1}). By similar reasons the trace of
$(\nabla^{2}F_{\beta})(\bs{m}_{i}^{\beta})$ is positive. In
particular, the points $\bs{m}_{0}^{\beta}$, $\bs{m}_{1}^{\beta}$, and
$\bs{m}_{2}^{\beta}$ are local minima.

Consider a critical point $\bs{\sigma}_{i}^{\beta}$, $0\le i\le2$. We
claim that
\begin{align}
& \frac{1}{\beta q_{\beta}}\;<\;\frac{3}{2}\;\;\mbox{\;and}
\;\;\;\frac{1}{\beta q_{\beta}}+
\frac{2}{\beta(1-2q_{\beta})}\;>\;\frac{9}{2}
\mbox{\;\;\;for\;}\,\beta>\beta_{1}\;,
\label{qq1}\\
& \quad\frac{1}{\beta q_{\beta}}\;>\;\frac{3}{2}\;\;\;\mbox{and}
\;\;\;\frac{1}{\beta q_{\beta}}+\frac{2}{\beta(1-2q_{\beta})}
\;<\;\frac{9}{2}\;\;\;\mbox{for\;}\,\beta<\beta_{1}\;.
\label{qq2}
\end{align}
The first inequality in (\ref{qq1}) follows from the fact that
$\beta>2$ and $q_{\beta}>1/3$ (cf. Figure \ref{fig2}). For the second
inequality, replace $\beta$ by $f_{0}(q_{\beta})$ as before. Since
$q_{\beta}>1/3$, we can rewrite the inequality as
$g_{0}(q_{\beta})<0$, where $g_{0}$ is defined in (\ref{g1}). By
(\ref{g2}), $g_{0}(t)$ is decreasing for $t>1/3$ so that
$g(q_{\beta})<g(1/3)=0$. The proof for (\ref{qq2}) is analogous.

By (\ref{qq1}) and (\ref{qq2}), the determinant of
$(\nabla^{2}F_{\beta})(\bs{\sigma}_{i}^{\beta})$, $0\le i\le2$, which
is equal to
\begin{equation*}
\left(\frac{1}{\beta q_{\beta}} - \frac{3}{2}\right)\,
\left(\frac{1}{\beta q_{\beta}}+\frac{2}{\beta(1-2q_{\beta})}
-\frac{9}{2}\right)\;,
\end{equation*}
is negative. This completes the proof.

Finally, assume that $\beta=\beta_1$. The proof presented to show that
the points $\bs{m}_{i}^{\beta}$, $0\le i\le2$, are local minima of
$F_{\beta}$ in the case $\beta>\beta_3$ is in force for
$\beta=\beta_1$. On the other hand, we can show that $\bs p$ is a
degenerate critical point which is not a local minimum as we proved
that points $\bs{m}_{i}^{\beta}$ have these properties in the case
$\beta=\beta_3$.
\end{proof}

It follows from the definition of $f_0$ that $\lim_{\beta\to\infty}
p_{\beta} = 0$ (cf.  Figure \ref{fig2}). Hence, the local minima
$\bs{m}_{i}^{\beta}$ corresponds to the configurations in which most
of the spins are aligned with $\bs{v}_{i}$. 

According to Proposition \ref{pro1}, the point $\bs{p}$ is a global
attractor if $\beta<\beta_{3}$, and the critical points
$\bs{m}_{i}^{\beta}$, $0\le i\le 2$, are the unique stable equilibria
if $\beta>\beta_{1}$. In the range $(\beta_{3},\beta_{1})$ these $4$
local minima coexist. We examine more closely this case.

Since $F_{\beta}$ is symmetric with respect to $x_{0}$, $x_{1}$,
$x_{2}$, the quantities $H_\beta$ and $h_\beta$ introduced below are
well defined:
\begin{align*}
& H_{\beta}\;=\;F_{\beta}(\bs{\sigma}_{0}^{\beta})
\;=\;F_{\beta}(\bs{\sigma}_{1}^{\beta})
\;=\;F_{\beta}(\bs{\sigma}_{2}^{\beta})\;,\\
& \quad h_{\beta}\;=\;F_{\beta}(\bs{m}_{0}^{\beta})
\;=\;F_{\beta}(\bs{m}_{1}^{\beta})\;=\;F_{\beta}(\bs{m}_{2}^{\beta})\;.
\end{align*}
Recall that $F_{\beta}(\bs{p})=0$ for all $\beta>0$. 

\begin{lemma}
\label{lem01}
There exists $\beta_{2}\in(\beta_{3},\beta_{1})$ such that
$h_{\beta}>0$ for $\beta\in(\beta_{3},\beta_{2})$, $h_{\beta_{2}}=0$,
and $h_{\beta}<0$ for $\beta\in(\beta_{2},\beta_{1})$.
\end{lemma}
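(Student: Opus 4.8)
The plan is to study $h_\beta = F_\beta(\bs{m}_0^\beta)$ as a function of the single parameter $\beta$ on the closed interval $[\beta_3,\beta_1]$, exploiting that $F_\beta(\bs{p})=0$ for every $\beta$, so that the sign of $h_\beta$ records whether the energetic minima lie above or below the entropic one. By the symmetry of $F_\beta$ it suffices to work with $\bs{m}_0^\beta=(p_\beta,p_\beta)=\bs{l}_0(p_\beta)$ from \eqref{01}, which lies on the line $\bs{l}_0$ of \eqref{line1} together with the saddle $\bs{\sigma}_0^\beta=\bs{l}_0(q_\beta)$ and the point $\bs{p}=\bs{l}_0(1/3)$.

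First I would establish strict monotonicity of $\beta\mapsto h_\beta$. Since $\bs{m}_0^\beta$ is a critical point of $F_\beta$ for each $\beta$, the gradient term drops out of the total derivative (an envelope argument), leaving $\tfrac{d}{d\beta}h_\beta=(\partial_\beta F_\beta)(\bs{m}_0^\beta)$. From the decomposition $F_\beta=H+\beta^{-1}S$ in \eqref{pot2} one reads off $\partial_\beta F_\beta=-\beta^{-2}S$, whence $\tfrac{d}{d\beta}h_\beta=-\beta^{-2}S(\bs{m}_0^\beta)$. The key inequality is that the entropy $S(\bs{x})=\sum_i x_i\log(3x_i)$ of \eqref{ent} is nonnegative and vanishes only at $\bs{p}$; this is the Gibbs--Jensen inequality, $S$ being the relative entropy of $\bs{x}$ against the uniform distribution. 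As $\bs{m}_0^\beta\neq\bs{p}$ throughout $[\beta_3,\beta_1)$ (because $p_\beta<m_0<1/3$), we get $S(\bs{m}_0^\beta)>0$ and hence $h_\beta$ strictly decreasing. Differentiability in $\beta$ on the open interval follows from the implicit function theorem applied to $f_0(p_\beta)=\beta$, using $f_0'(p_\beta)\neq0$ for $p_\beta<m_0$, and continuity of $h_\beta$ up to the endpoints is clear since $\bs{m}_0^\beta$ remains in the interior of $\Xi$.

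Next I would pin down the signs at the endpoints using only the one-variable formula \eqref{pq1}, $\tfrac{d}{dt}F_\beta(\bs{l}_0(t))=\tfrac{3}{\beta}(3t-1)(f_0(t)-\beta)$, rather than any explicit evaluation of $F_\beta$. At $\beta=\beta_3$ one has $p_{\beta_3}=m_0$, and for $t\in(m_0,1/3)$ the factors $3t-1$ and $f_0(t)-\beta_3$ carry opposite signs (as $f_0>\beta_3$ to the right of its minimizer $m_0$), so $F_\beta(\bs{l}_0(\cdot))$ is strictly decreasing on $(m_0,1/3)$; hence $h_{\beta_3}=F_{\beta_3}(\bs{l}_0(m_0))>F_{\beta_3}(\bs{p})=0$. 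At $\beta=\beta_1=2$ one has $q_{\beta_1}=1/3$, so for $t\in(p_{\beta_1},1/3)$ one finds $f_0(t)<\beta_1$ while $3t-1<0$, making $F_\beta(\bs{l}_0(\cdot))$ strictly increasing there; hence $h_{\beta_1}<F_{\beta_1}(\bs{p})=0$.

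Finally, combining strict monotonicity with $h_{\beta_3}>0>h_{\beta_1}$ and the intermediate value theorem produces a unique $\beta_2\in(\beta_3,\beta_1)$ with $h_{\beta_2}=0$, and the decreasing behavior delivers the claimed trichotomy of signs. I expect the strict-monotonicity step to be the main obstacle: one is tempted to compare $h_\beta$ to $0$ directly by integrating \eqref{pq1} from $p_\beta$ to $1/3$, but that integrand changes sign at $q_\beta$, so the raw integral does not expose the sign of $h_\beta$. The envelope identity $\tfrac{d}{d\beta}h_\beta=-\beta^{-2}S(\bs{m}_0^\beta)$ circumvents this entirely, reducing the whole question to the positivity of the entropy.
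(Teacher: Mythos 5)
Your proof is correct, but it takes a genuinely different route from the paper's. The paper evaluates $h_{\beta}$ explicitly, substitutes $\beta=f_{0}(p_{\beta})$ to reduce everything to the sign of a one-variable function $k_{0}(p_{\beta})$, shows $k_{0}$ is increasing on $(0,m_{0})$ via the identity $k_{0}'=3g_{0}$ (reusing the function $g_{0}$ from the proof of Proposition \ref{pro1}), and finishes with the intermediate value theorem after observing $k_{0}(p_{\beta_{3}})>0>k_{0}(p_{\beta_{1}})$ --- an endpoint check that is merely asserted and in practice requires numerical evaluation (note also that the displayed bracket differs from the stated $k_{0}$ by the additive constant $3\log 3$, a harmless slip since the constant drops out of $k_{0}'$). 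Your envelope identity $\tfrac{d}{d\beta}h_{\beta}=-\beta^{-2}S(\bs{m}_{0}^{\beta})$ replaces that algebra with the single conceptual fact that $S$ in \eqref{ent} is the relative entropy with respect to the uniform distribution, hence strictly positive off $\bs{p}$; since $p_{\beta}\le m_{0}<1/3$ on $[\beta_{3},\beta_{1}]$, strict monotonicity of $h_{\beta}$ is immediate and transparent, and it is essentially the same monotonicity the paper extracts from $k_{0}'=3g_{0}>0$, obtained thermodynamically instead of by computation. Equally valuable, your endpoint signs come from \eqref{pq1} alone: $F_{\beta_{3}}(\bs{l}_{0}(\cdot))$ strictly decreasing on $(m_{0},1/3)$ gives $h_{\beta_{3}}>0$, and $F_{\beta_{1}}(\bs{l}_{0}(\cdot))$ strictly increasing on $(p_{\beta_{1}},1/3)$ gives $h_{\beta_{1}}<0$, so your argument needs no numerics at all, whereas the paper's endpoint inequality is left unverified. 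Two points to make explicit in a final write-up: the strict inequality $f_{0}(t)<\beta_{1}$ on $(p_{\beta_{1}},1/3)$ uses that $f_{0}$ is strictly decreasing on $(0,m_{0})$ and strictly increasing on $(m_{0},1/2)$ (true, since $f_{0}'$ vanishes there only at isolated points); and the implicit function theorem yields differentiability of $\beta\mapsto p_{\beta}$ only on the open interval, because $f_{0}'(m_{0})=0$, so at $\beta_{3}$ you have continuity but not smoothness --- which is exactly what your monotonicity-plus-endpoint-continuity argument requires, so the structure is sound as written.
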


\begin{proof}
By (\ref{pot2}), we can write $h_{\beta}=F_{\beta}(\bs{m}_{i}^{\beta})$
as
\begin{equation*}
h_{\beta}\;=\;-\frac{1}{2}(1-3p_{\beta})^{2}+
\frac{1}{\beta}\Big\{ 2p_{\beta} \, \log (3p_{\beta}) 
\,+\, (1-2p_{\beta})\, \log \big[3(1-2p_{\beta}) \big] \Big\} \;.
\end{equation*}
Replacing $\beta$ by $f_{0}(p_{\beta})$, the previous identity becomes 
\begin{equation*}
h_{\beta}\;=\;\frac{1-3p_{\beta}}{2\log[(1-2p_{\beta})/p_{\beta}]}
\, \Big\{ (2-3p_{\beta}) \log \big[3(1-2p_{\beta})\big] \,+\,
(3p_{\beta}+1) \log (3p_{\beta}) \Big \}\;.
\end{equation*}
Since $p_{\beta}<\frac{1}{4}$, $h_{\beta}$ has the same sign as
$k_{0}(p_{\beta})$, where
\begin{equation*}
k_{0}(t)\;=\;(2-3t)\log(1-2t)+(3t+1)\log t\;.
\end{equation*}
A straightforward computation gives that $k_{0}'(t)=3g_{0}(t)$, where
$g_{0}$ has been introduced in (\ref{g1}). In the proof of Proposition
\ref{pro1}, we proved that $g_{0}(t)>0$ for $t<m_0$, In particular,
$k_0 (t)$ is increasing for for $t<m_0$. To complete the proof it remains
to observe that $k_{0}(p_{\beta_{3}})>0>k_{0}(p_{\beta_{1}})$, that
$p_{\beta}$ is a continuous decreasing function of $\beta$ on
$(\beta_{3},\beta_{1})$, and to recall the intermediate value theorem.
\end{proof}

The approximate numerical value of $\beta_{2}$ is $1.8484$. By the
previous lemma, the global minima of $F_{\beta}$ is $\bs{p}$ for
$\beta<\beta_{2}$ and $\bs{m}_{0}^{\beta}$, $\bs{m}_{1}^{\beta}$,
$\bs{m}_{2}^{\beta}$ for $\beta>\beta_{2}$. For $\beta=\beta_{2}$,
these four points are global minima. This completes the description of
the metastable and the stable point of the potential $F_\beta$ in the
temperature regimes determined by the critical temperatures
$\beta_{3}<\beta_{2}<\beta_{1}$.  We refer to Figure \ref{fig3} for
the illustration of the characterization of three critical temperatures.

\begin{figure}
\includegraphics[scale=0.067]{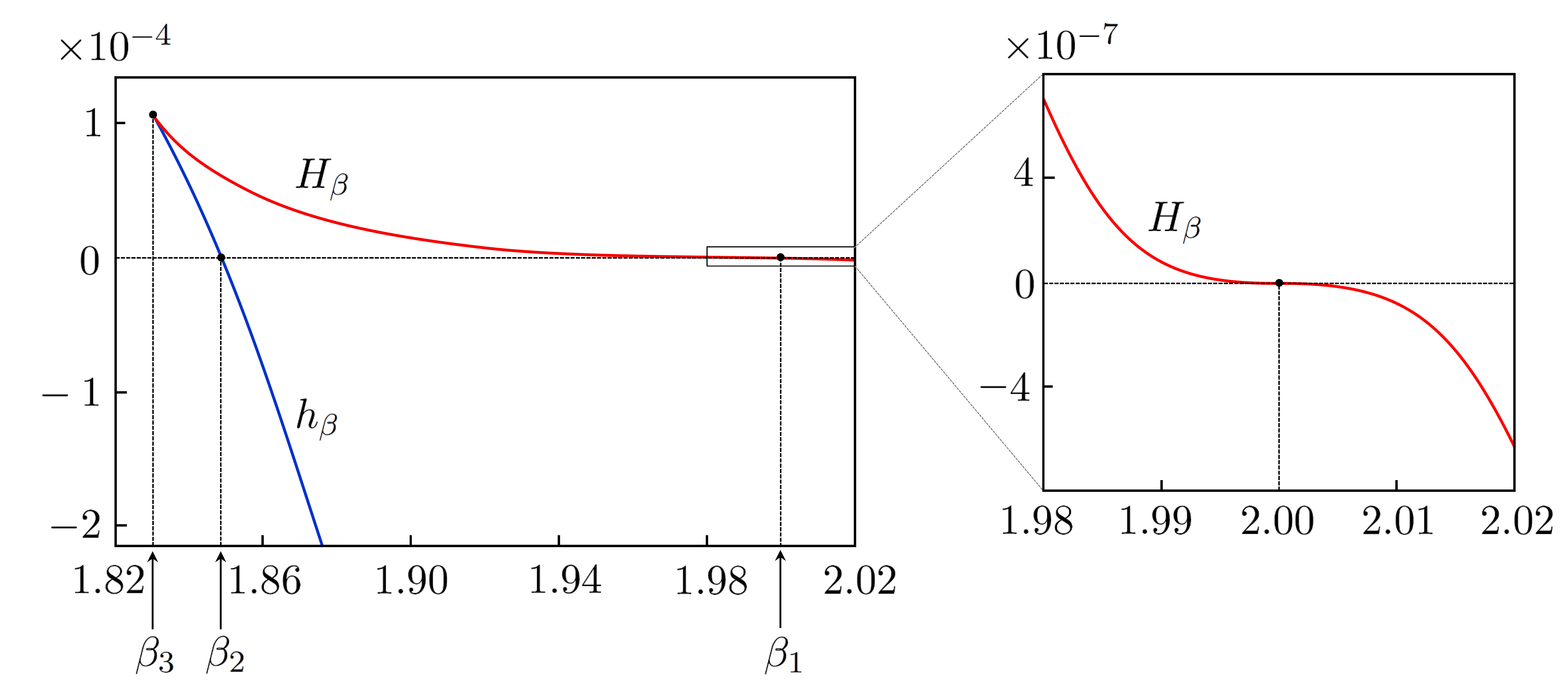}
\protect\caption{\label{fig3} The graph of $H_\beta$ and $h_\beta$ as
  functions of $\beta$, and the critical temperatures $\beta_1=2$,
  $\beta_2\thickapprox1.8484$ and $\beta_3\thickapprox1.8304$. For
  $\beta>\beta_3$, $\theta_\beta =H_\beta - h_\beta$ is the depth of
  the valleys $W_\beta(i), 0\le i\le 2$, and for
  $\beta\in(\beta_3,\beta_1)$, $H_\beta$ is the depth of the valley
  $W_\beta (3)$.}
\end{figure}
 
\subsection{Stable and Metastable Sets}

We introduce in this subsection some valleys around the local minima,
and we investigate the relationship between these sets and the saddle
points. Since it has been observed in the previous subsection that
there is no metastability behavior in the high temperature regime
$\beta \le \beta_{3}$, we assume that $\beta > \beta_{3}$.

\begin{figure}
  \protect
\includegraphics[scale=0.132]{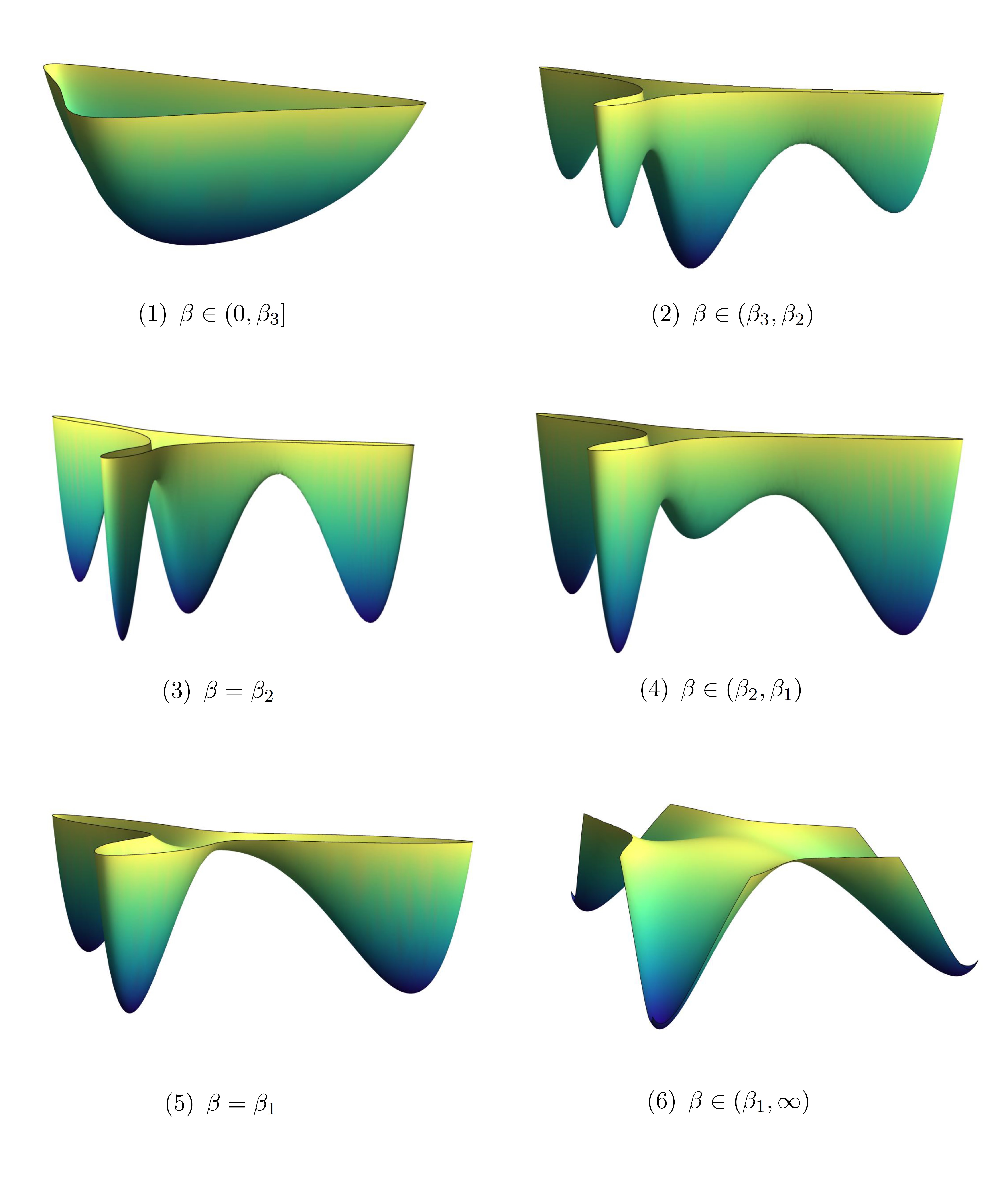}\protect
  \caption{\label{fig6}The graphs of $F_\beta (\bs{x})$ for various
temperature conditions. We used $\beta=1.6,\,1.843,\,1.86$ and $2.4$ for
(1), (2), (4) and (6), respectively.}
\end{figure}

Let $W_{\beta}=\{\bs x\in\Xi:F_{\beta}(\bs{x})<H_{\beta}\}$ and denote
by $W_{\beta}(i)$, $0\le i\le2$, the connected component of
$W_{\beta}$ containing $\bs{m}_{i}^{\beta}$. In addition, for
$\beta<\beta_1$, denote by $W_{\beta}(3)$ the connected component of
$W_\beta$ containing $\bs{p}$ (cf.  Figure \ref{fig4}). In the next
proposition we prove that in each temperature range the structure of
the valleys $W_\beta$ resembles the ones illustrated in Figure
\ref{fig4}.

\begin{figure}
\includegraphics[scale=0.050]{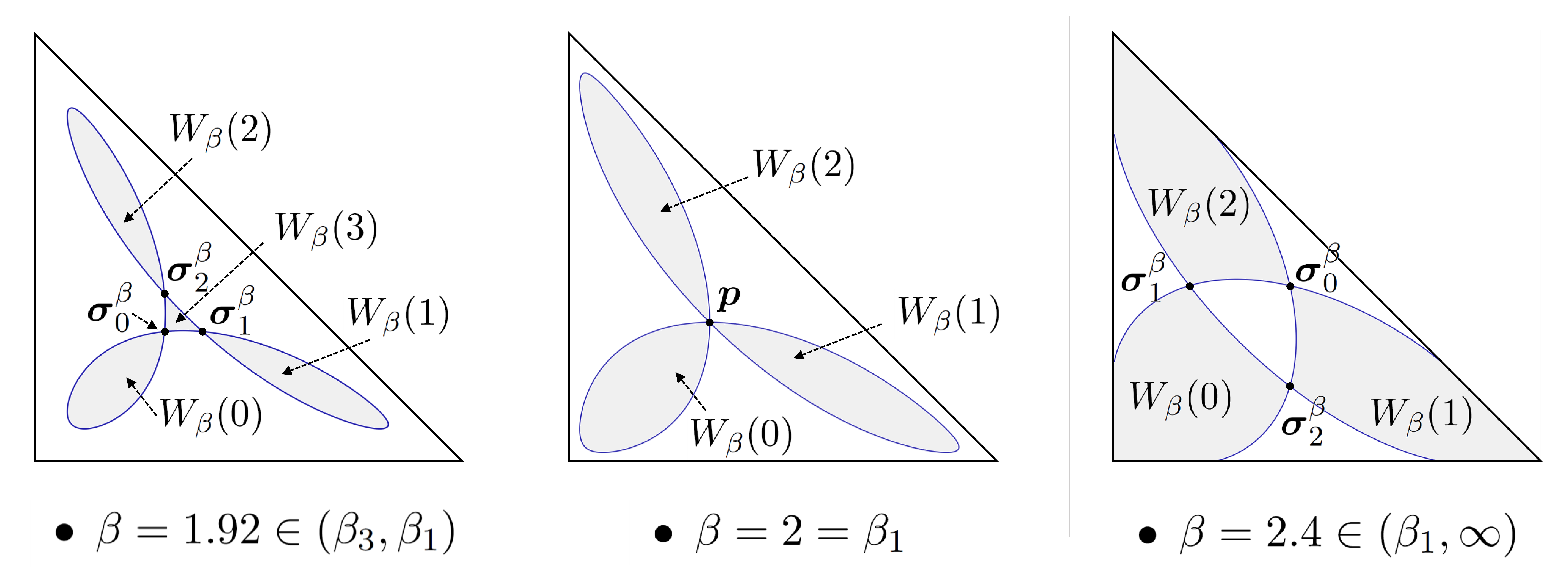}
\protect\caption{\label{fig4} Examples of valleys
  $W_{\beta}(0),\,W_{\beta}(1),\,W_{\beta}(2)$ and $W_{\beta}(3)$ at
  different temperatures. The blue contour denotes the level set
  $F_{\beta}^{-1}(H_\beta)$.}
\end{figure}

For $i\neq j$, let $\mf{S}_{i,j}=\overline{W_\beta(i)} \cap
\overline{W_\beta(j)}$, where $\overline{F}$ stands for the closure of
set $F\subset \mathbb R^2$.

\begin{proposition}
\label{prow} 
We have that
\begin{enumerate}
\item For $\beta\ge \beta_1$, the sets $W_\beta (i)$, $0\le i\le 2$,
  are different.
\item For $\beta>\beta_1$, $\mf{S}_{i,j}=\{\bs{\sigma}_k^\beta\}$,
  where $\{i,j,k\} = \{0,1,2\}$.
\item For $\beta=\beta_1$ and $0\le i \neq j\le 2$,
  $\mf{S}_{i,j}=\{\bs{p}\}$.
\item For $\beta_3<\beta<\beta_1$, the sets $W_\beta (i)$, $0\le i\le 3$, are
  different.
\item For $\beta_3<\beta<\beta_1$, $\mf{S}_{i,j}$ is empty for
  $0\le i\neq j\le 2$, and $\mf{S}_{i,3}=\{\bs{\sigma}_i^\beta\}$ for
  $0\le i\le2$.
\end{enumerate}
\end{proposition}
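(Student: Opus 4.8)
My plan is to treat $F_\beta$ as a proper function on $\mathrm{int}(\Xi)\cong\mathbb{R}^2$: since $\nabla F_\beta\cdot\bs n=+\infty$ on $\partial\Xi$ by \eqref{01f}, the sublevel set $\{F_\beta\le H_\beta\}$ is a compact subset of $\mathrm{int}(\Xi)$ with no critical points on the boundary, and (for $\beta\neq\beta_1,\beta_3$) $F_\beta$ is a Morse function there. Throughout I abbreviate $W(i)=W_\beta(i)$ and write $R_i$ for the reflection of $\Xi$ fixing the line $\bs l_i$ (so $R_0$ interchanges $x_1,x_2$), under which $F_\beta$ is invariant. First I would record the one-dimensional profiles from \eqref{pq1}: along $\bs l_0$, for $\beta_3<\beta<\beta_1$ the function falls to $\bs m_0$ at $t=p_\beta$, rises to a local maximum $\bs\sigma_0$ at $t=q_\beta$, and falls to $\bs p$ at $t=1/3$ (with $p_\beta<q_\beta<1/3$), whereas for $\beta>\beta_1$ it falls to $\bs m_0$, rises to the local maximum $\bs p$ at $t=1/3$, and falls to a local minimum $\bs\sigma_0$ at $t=q_\beta>1/3$. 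From this, $0<H_\beta$ for $\beta<\beta_1$ and $H_\beta<0$ for $\beta>\beta_1$; moreover $h_\beta<H_\beta$ (immediate from the profile when $\beta<\beta_1$, and a consequence of Proposition \ref{pro1}(5) and the remark following Lemma \ref{lem01} when $\beta\ge\beta_1$, since there the $\bs m_i$ are the global minima while the $\bs\sigma_k$ are not). In particular every relevant minimum lies strictly below the common saddle value $H_\beta$, and the only critical points at level $H_\beta$ are the $\bs\sigma_k$ (or the single point $\bs p$ when $\beta=\beta_1$, where $q_\beta=1/3$ forces $\bs\sigma_k=\bs p$).

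Next I would prove the distinctness claims (1) and (4) by a mountain–pass argument. If two of the minima (two $\bs m_i$, or one $\bs m_i$ and $\bs p$ when $\beta<\beta_1$) belonged to the same component of $\{F_\beta<H_\beta\}$, a connecting path staying below $H_\beta$ would give a mountain–pass value strictly below $H_\beta$, and the mountain–pass lemma would produce a critical point of index $\ge1$ there, contradicting the fact that all critical points strictly below $H_\beta$ are minima. Hence the relevant minima lie in pairwise distinct components, which are exactly $W(0),W(1),W(2)$ for $\beta\ge\beta_1$ and $W(0),\dots,W(3)$ for $\beta_3<\beta<\beta_1$. This argument is insensitive to the degeneracy of $\bs p$ at $\beta=\beta_1$, so it also settles (1) at $\beta=\beta_1$.

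For the intersection claims I would first localise $\mf{S}_{i,j}$ onto the level $\{F_\beta=H_\beta\}$: a point of $\overline{W(i)}\cap\overline{W(j)}$ with $F_\beta<H_\beta$ has a neighbourhood inside a single component, forcing $W(i)=W(j)$; and at a regular point of the level set, $\{F_\beta<H_\beta\}$ is locally a single half-disc, so such a point lies in only one closure. Thus $\mf{S}_{i,j}$ is contained in the critical points at level $H_\beta$. For $\beta_3<\beta<\beta_1$ the profile of Step~1 reads off the answer directly: $\bs\sigma_k$ is a local maximum along $\bs l_k$, so its two descending sectors run along $\bs l_k$ and fall, without recrossing $H_\beta$, to $\bs m_k$ and to $\bs p$; hence $\bs\sigma_k\in\overline{W(k)}\cap\overline{W(3)}$ only, giving $\mf{S}_{k,3}=\{\bs\sigma_k\}$ and $\mf{S}_{i,j}=\emptyset$ for energetic $i\neq j$, which is (5). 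For $\beta>\beta_1$, $\bs\sigma_k$ is a local minimum along $\bs l_k$, so its unstable direction is transverse to $\bs l_k$ and its two low sectors sit on the two sides of $\bs l_k$; by $R_k$-symmetry they belong either to one component (a self-loop) or to the $R_k$-swapped pair $\{W(i),W(j)\}$ with $\{i,j,k\}=\{0,1,2\}$, the unique invariant pair of distinct components. Finally, at $\beta=\beta_1$ the Hessian \eqref{hess} vanishes at $\bs p$, so I would instead use \eqref{pq1}: at $t=1/3$ the derivative has a double zero of constant sign, whence $F_\beta$ increases monotonically from each $\bs m_i$ up to $\bs p$ along $\bs l_i$, giving $\bs p\in\overline{W(i)}$ for every $i$ and hence $\mf{S}_{i,j}=\{\bs p\}$, which is (3).

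The hard part will be the low-temperature identification for $\beta>\beta_1$, where the descending sectors of $\bs\sigma_k$ leave the line $\bs l_k$ and the one-dimensional profile no longer determines their destinations. I would close the gap topologically: by the $\mathbb{Z}_3$-symmetry the three saddles are of the same type, so if one were a self-loop all three would be, and attaching three index-$1$ self-loops at the common level $H_\beta$ would raise $b_1$ by three, of which the single maximum $\bs p$ (index $2$) could cancel only one, leaving $b_1\ge2$ and contradicting that $\{F_\beta\le c\}$ is a disc for large $c$. This excludes the self-loop, forcing $\bs\sigma_k$ to join $W(i)$ and $W(j)$ and yielding $\mf{S}_{i,j}=\{\bs\sigma_k\}$, which is (2). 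The only other delicate point is the degenerate maximum at $\beta=\beta_1$, which the double-zero structure of \eqref{pq1} handles cleanly in place of the vanishing Hessian.
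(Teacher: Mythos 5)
Your architecture is genuinely different from the paper's (mountain pass for distinctness, local sector analysis at nondegenerate saddles plus the $R_k$- and $\bb Z_3$-symmetries, and a Betti-number count to exclude the self-loop at $\beta>\beta_1$, where the paper instead builds explicit separating segments and tracks descending gradient-flow trajectories), but it rests on a false foundational claim. From \eqref{01f} you infer that $\{F_\beta\le H_\beta\}$ is a compact subset of $\mathrm{int}(\Xi)$. Equation \eqref{01f} only rules out critical points and local minima \emph{on} the boundary; it does not keep the sublevel set away from $\partial\Xi$. Concretely, at the vertex $(1,0)$ one has $F_\beta=-\tfrac12+\beta^{-1}\log 3$, while $H_\beta=F_\beta(\bs{\sigma}_0^\beta)\to-\tfrac18$ as $\beta\to\infty$ (since $q_\beta\to 1/2$); already at $\beta=3$ the vertex value is below $H_\beta$, so for large $\beta$ the sublevel set --- indeed the valleys $W_\beta(i)$ themselves --- contain boundary points near all three vertices. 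This infects several of your steps. First, your localization of $\mf{S}_{i,j}$ to the interior critical points at level $H_\beta$ is incomplete: a point of $\overline{W_\beta(i)}\cap\overline{W_\beta(j)}$ could a priori lie on $\partial\Xi$, where $F_\beta$ is not differentiable (the derivatives \eqref{der1} diverge) and your ``regular point of the level set'' dichotomy does not apply; since claims (2), (3) and (5) are exact identifications of $\mf{S}_{i,j}$, this is a real hole, not a technicality. Second, the Morse bookkeeping for the self-loop exclusion presumes that the homotopy type of $\{F_\beta\le c\}$ changes only at interior critical values; on a manifold with boundary this is true here only because $F_\beta$ decreases strictly along inward normals near $\partial\Xi$ (so merges of components ``already happen'' slightly inside), an argument you never make. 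Third, Palais--Smale for the mountain-pass step must be checked on the incomplete manifold $\mathrm{int}(\Xi)$; it does hold, because $|\nabla F_\beta|\to\infty$ at $\partial\Xi$ forces P--S sequences into compact interior sets, but again this needs saying. Finally, your self-loop dichotomy tacitly assumes that the components of $\{F_\beta<H_\beta\}$ are exactly $W_\beta(0),W_\beta(1),W_\beta(2)$; that every component contains one of the $\bs{m}_i^\beta$ requires proof (e.g., the infimum of $F_\beta$ over the closure of a component cannot be attained on $\partial\Xi$ by inward monotonicity, hence is an interior local minimum, which Proposition \ref{pro1} then classifies).

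For contrast, the paper avoids every boundary issue by constructing separating sets that cross $\Xi$ entirely: for $\beta\ge\beta_1$ the spokes $\overline{\bs{p}\bs{q}_i}$ from $\bs p$ to the edge midpoints, and for $\beta_3<\beta<\beta_1$ the lines $\bs{k}_i$, on which the one-dimensional monotonicity coming from \eqref{pq1} gives $F_\beta\ge H_\beta$ with equality only at the saddle. Distinctness is then immediate (no mountain pass needed), the reverse inclusions $\mf{S}_{i,j}\subset\{\bs{\sigma}_k^\beta\}$ follow because the intersection of the sectors is contained in a segment on which $F_\beta\ge H_\beta$ with a unique minimizer, and the forward inclusions follow by confining $\dot{\bs y}=-\nabla F_\beta(\bs y)$ to a sector containing a unique interior critical point, with \eqref{01f} used exactly where it is valid --- to prevent the flow from hitting $\partial\Xi$. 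Your plan could likely be completed by supplying the boundary lemma (inward strict decrease of $F_\beta$ near $\partial\Xi$, so each boundary point of $\{F_\beta\le H_\beta\}$ lies in the closure of exactly one component and no topology change of sublevel sets occurs at boundary-only levels), and the topological exclusion of the self-loop is an appealing replacement for the paper's flow argument; but as written the proof does not go through.
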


\begin{proof} 
Fix $\beta \ge \beta_1$.  To prove that the sets $W_\beta(i)$, $0\le
i\le 2$, are different, recall from \eqref{line1} the definitions of
the lines $\bs{l}_i (t)$, and set $\bs{q}_i = \bs{l}_i (1/2)$. 
Each segment $\overline{\bs{p}\bs{q}_i}$, $0\le i\le 2$, can be
represented as
\begin{equation*}
\overline{\bs{p}\bs{q}_i}\;=\;\big\{\bs{l}_i (t): 1/3\le t\le
1/2\big\}\;. 
\end{equation*}
The segments $\overline{\bs{p}\bs{q}_i}$, $0\le i\le 2$, divide the
set $\Xi$ into three pieces, denoted by $\Xi^{(i)}$, $0\le i\le 2$, such that $\bs{m}_i^\beta \in \Xi^{(i)}$ (cf. Figure
\ref{fig5}). By \eqref{pq1}, the potential $F_\beta$ restricted to
each of these segments attains its minimum at $\bs{\sigma}_i^\beta$,
and hence $F_\beta(\bs{x})\ge H_\beta$ for all $\bs{x}\in
\overline{\bs{p}\bs{q}_i}$, $0\le i\le 2$. This proves that
$W_\beta(i) \subset \Xi^{(i)}$ for $0\le i\le 2$. In particular,
the valleys $W_\beta (i)$ are all different, as asserted in (1)

Assume that $\beta > \beta_1$. Since $\bs{\sigma}_i^\beta$ is a saddle
point, there is an eigenvector, represented by $\bs{w}_i$,
corresponding to the negative eigenvalue of the Hessian of $F_\beta$
at $\bs{\sigma}_i^\beta$. Hence, the function $t \mapsto F_\beta
(\bs{\sigma}_i^\beta + t \bs{w}_i)$ achieves a local maximum at
$t=0$. Let $\epsilon>0$ be a small number such that $F_\beta
(\bs{\sigma}_0^\beta + t \bs{w}_0)<H_\beta$ for all $0<|t|<\epsilon$.
Assume, without loss of generality, that $\bs{\sigma}_0^\beta +
\epsilon \bs{w}_0\in W_\beta(1)$ and that $\bs{\sigma}_0^\beta -
\epsilon \bs{w}_0 \in W_\beta(2)$. Consider the path $\{\bs y(t) :
t\ge 0\}$ described by the ordinary differential equation
\begin{equation*}
\dot{\bs{y}}(t)\;=\;-\nabla F_\beta \big(\bs{y}(t)\big)
\;, \quad \bs{y}(0)\;=\;\bs{\sigma}_0^\beta + \epsilon \bs{w}_0\;.
\end{equation*}
It is well known that $F_\beta (\bs{y}(t))$ is a decreasing function
of $t$ and that $\bs{y}(t)$ converges to a local minimum of $F_\beta$
as $t\uparrow \infty$. Since $F_\beta(\bs{y}(0))<H_\beta$, this path
cannot cross the segments $\overline{\bs{p}\bs{q}_i}$, $0\le i\le 2$, and,
by \eqref{01f}, it can not hit the boundary of $\Xi$.  It stays,
therefore, in the interior of $\Xi^{(1)}$ for all $t\ge 0$.  Since
$\bs{m}_1^\beta$ is the unique critical point of $F_\beta$ in the
interior of $\Xi^{(1)}$, $\bs{y}(t)$ must converge to $\bs{m}_1^\beta$
as $t\uparrow \infty$. This proves that $\bs{\sigma}_0^\beta$ and
$\bs{m}_1^\beta$ are connected by a continuous path, along which
$F_\beta$ is less than $H_\beta$, except at $\bs{\sigma}_0^\beta$. In
particular, $\bs{\sigma}_0^\beta$ belongs to $\overline{W_\beta
  (1)}$. Similarly, $\bs{\sigma}_0^\beta \in \overline{W_\beta (2)}$,
so that $\{\bs{\sigma}_0^\beta\} \subset \mathfrak{S}_{1,2}$. 

To prove the inverse relation, note that by the first assertion of the
proposition and by the definition of the segment
$\overline{\bs{p}\bs{q}_0}$, $\mathfrak{S}_{1,2} = \overline{W_\beta(1)}
\cap \overline{W_\beta(2)} \subset \Xi^{(1)} \cap \Xi^{(2)} =
\overline{\bs{p}\bs{q}_0}$. Since $\bs{\sigma}_0^\beta $ is the only point
$\bs x$ in the segment $\overline{\bs{p}\bs{q}_0}$ such that $F_\beta(\bs
x)= H_\beta$, $\mathfrak{S}_{1,2} \subset \{\bs{\sigma}_0^\beta\}$, so
that $\mathfrak{S}_{1,2} = \{\bs{\sigma}_0^\beta\}$

The same argument shows that $\mathfrak{S}_{i,j} =
\{\bs{\sigma}_k^\beta\}$ for all $\{i,\,j,\,k\}=\{0,\,1,\,2\}$. This
completes the proof of (2).

Assume that $\beta=\beta_1$. By \eqref{pq1}, $F_\beta (\bs{l}_i (t))<
H_\beta =0$ for all $t\in [p_\beta ,1/3)$, and $F_\beta(\bs{l}_i
(1/3))=F_\beta(\bs{p})=0$. At this point we may repeat the arguments
presented in the case $\beta>\beta_1$ to conclude that $\mf{S}_{i,j} =
\{\bs{p}\}$, as asserted in (3).

Assume that $\beta_3 < \beta <\beta_1$. Let $\bs{k}_i(t)$,
$t\in [-q_\beta,q_\beta]$, $0\le i\le2$, be the lines given by
\begin{equation*}
\bs{k}_0(t)\;=\;(q_\beta-t,q_\beta+t)\;,\;\;
\bs{k}_1(t)\;=\;(1-2q_\beta,q_\beta+t)\;,\;\;
\bs{k}_2(t)\;=\;(q_\beta-t,1-2q_\beta)\;.
\end{equation*}
The line $\bs{k}_i$ represents the set $\{\bs{x}:x_i=1- 2q_\beta\}$
and $\bs{k}_i(0) = \bs{\sigma}_i^\beta$. These three lines divide
$\Xi$ into four pieces if $q_\beta\ge 1/4$ and seven pieces if $q_\beta < 1/4$. For both of these cases, four of them contain exactly one of the points $\bs{m}_0^\beta$, $\bs{m}_1^\beta$, $\bs{m}_2^\beta$, $\bs{p}$
(cf. Figure \ref{fig5}). The function $F_\beta(\bs{k}_i(t))$ is
minimized at $t=0$, so that $F_\beta(\bs{k}_i(t))\ge F_\beta(\bs
\sigma_i) = H_\beta$ for all $t\in [-q_\beta, q_\beta]$. This proves
that $W_\beta (i)$, $0\le i\le 3$, are different sets, as stated in
(4).

The arguments presented in the proof of assertion (2) permit to show
that $\mf{S}_{i,3} = \{\bs{\sigma}_i^\beta\}$ for $0\le i\le 2$. On
the other hand, denote by $\Xi^{(j)}$, $0\le j\le 2$, the set which
contains the point $\bs m^\beta_j$ in the decomposition of $\Xi$ in
seven sets through the lines $\bs k_n(t)$ (cf. Figure \ref{fig5}). The intersection of
$\Xi^{(i)}$ with $\Xi^{(j)}$, $i\not = j$, is a singleton, and the
value of the potential $F_\beta$ at this point is larger than
$H_\beta$, which proves that $\mf{S}_{i,j} = \varnothing$ for $0\le
i, j\le 2$, $i\not = j$.
\end{proof}
 
\begin{figure}
\includegraphics[scale=0.060]{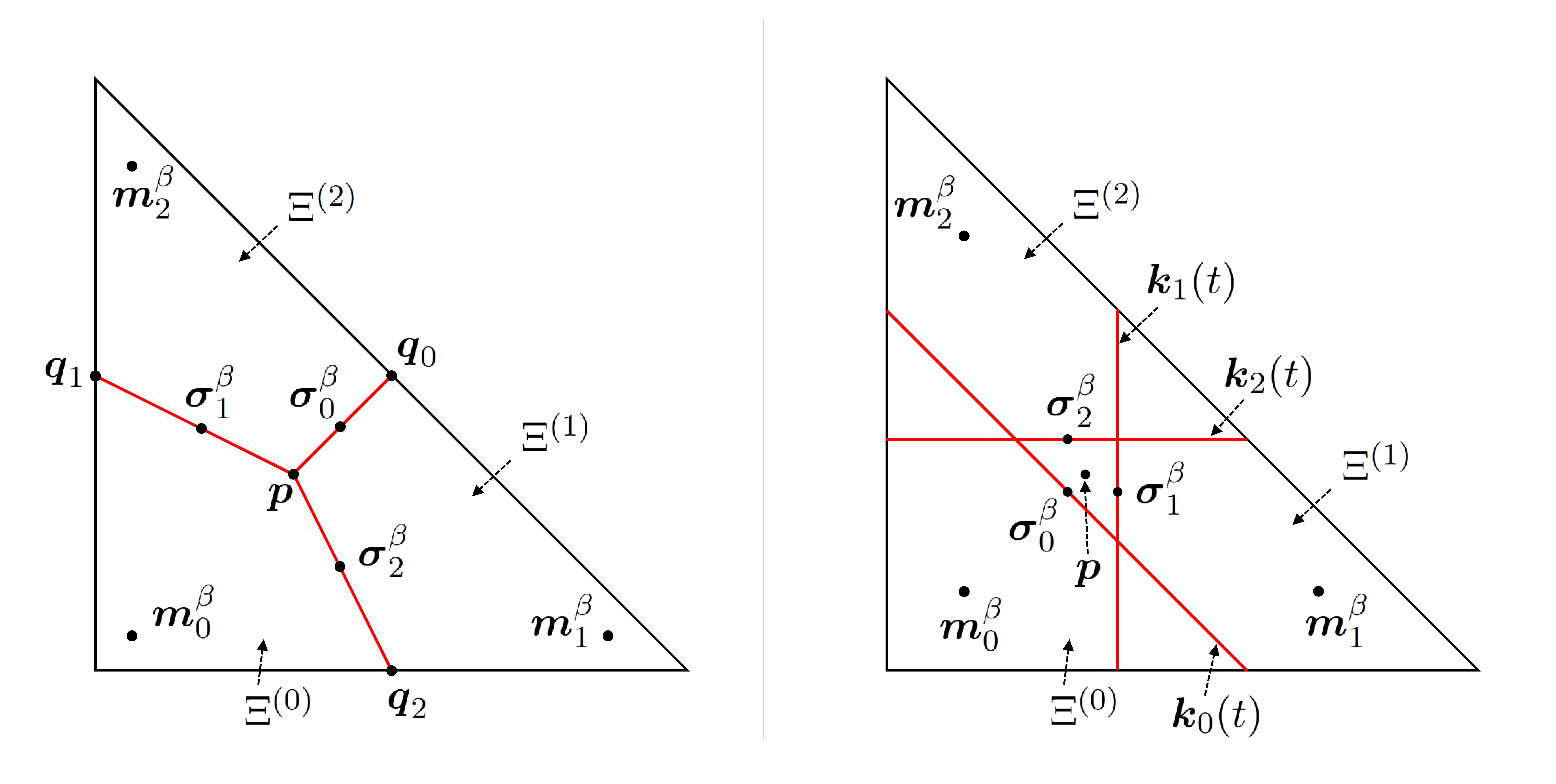}\protect
\protect\caption{\label{fig5}Visualizations of the proof of
  Proposition \ref{prow} for $\beta\in(\beta_1,\infty)$ (left) and
  $\beta \in (\beta_3,\beta_1)$ (right).}
\end{figure}

\subsection{Metastability result}

In this subsection, we present the metastable behavior of the chain
$\bs{r}_{N}(t)$ based on the results of \cite{LS}. We assume
throughout this section that $\beta>\beta_3$, 

Denote, from now on, $\bs{p}$ by $\bs{m}_{3}^{\beta}$, and define the
index set $\mf{I}_{\beta}$ by
\begin{equation}
\label{index}
\mf{I}_{\beta}\;=\;\{0,1,2\}\;\;\mbox{for }\beta>\beta_{1}
\quad \mbox{and}\quad \mf{I}_{\beta}\;=\;\{0,1,2,3\}\;\;
\mbox{for }\beta<\beta_{1}\;.
\end{equation}
Denote by $\theta_{\beta}(i)$, $i\in\mf{I}_{\beta}$, the
depth of the valley $W_{\beta}(i)$, namely,
\begin{equation}
\label{depth}
\begin{aligned}
& \theta_{\beta}(i)\;=\;\beta(H_{\beta}-h_{\beta})\;\;
\text{for $i=0$, $1$, $2$ and  $\beta>\beta_3$}\; , \\
& \qquad \theta_{\beta}(3)\;
=\;\beta H_{\beta} \text{ for } \beta_3<\beta<\beta_1\;.
\end{aligned}
\end{equation}
Let $\theta_{\beta} = \theta_{\beta}(0)= \theta_{\beta}(1)=
\theta_{\beta}(2)$, let $\epsilon$ be a small number satisfying
$0<\epsilon<\min_{i\in\mf{I}_{\beta}}\theta_{\beta}(i)$, and let
$W_{\beta}^{\epsilon}(i)\subset W_{\beta}(i)$, $i\in\mf{I}_{\beta}$,
be the connected component of
$\{\bs{x}\in\Xi:F_{\beta}(\bs{x})<H_{\beta}-\epsilon\}$.  The
metastable set $\mc{E}_{\beta}^{N}(i)$, $i\in\mf{I}_{\beta}$, is
defined as the discretization of $W_{\beta}^{\epsilon}(i)$:
$\mc{E}_{\beta}^{N}(i)=\Xi_{N}\cap W_{\beta}^{\epsilon}(i)$.

Let $\mathbb{A}$ (cf. \cite[display (2.7)]{LS}) be the
matrix given by
\begin{equation}
\label{matA}
\mathbb{A}\;=\;\sum_{i=0}^{2}(\bs{e}_{i}-\bs{e}_{i+1})
\, \bs{e}_{i}^{\dagger}\;=\;
\begin{pmatrix}
1 & 0\\
-1 & 1
\end{pmatrix}\;,
\end{equation}
where $\bs u^\dagger$ represents the transposition of the vector $\bs u$.
This matrix plays a significant role in the metastable behavior of
$\bs r_N(t)$, as observed in \cite{LS}.

\begin{lemma}
\label{lem3}
The determinant of $(\nabla^{2}F_{\beta})(\bs{x})$ and the
characteristic polynomial of $\mathbb{A} \cdot
(\nabla^{2}F_{\beta})(\bs{x})$ are symmetric with respect to
$x_{0},\,x_{1}$ and $x_{2}$.
\end{lemma}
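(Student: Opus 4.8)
The plan is to reduce both assertions to elementary symmetric-function computations. The key observation is that the Hessian in \eqref{hess} depends on $\bs x$ only through the three quantities $a := 1/(\beta x_0)$, $b := 1/(\beta x_1)$, $c := 1/(\beta x_2)$, together with the numerical constants $3$ and $3/2$. Since $a$, $b$, $c$ are obtained from $x_0$, $x_1$, $x_2$ by applying the \emph{same} map $t\mapsto 1/(\beta t)$ to each coordinate, any symmetric function of $a$, $b$, $c$ is automatically symmetric with respect to $x_0$, $x_1$, $x_2$. It therefore suffices to prove that the determinant of $(\nabla^2 F_\beta)(\bs x)$ and the two nonleading coefficients of the characteristic polynomial of $\mathbb A\cdot(\nabla^2 F_\beta)(\bs x)$ are symmetric in $a$, $b$, $c$.

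For the determinant I would rewrite $(\nabla^2 F_\beta)(\bs x)$ in the $a,b,c$ notation, expand the $2\times2$ determinant, and note that the $a^2$ and $-3a$ terms produced by the diagonal product cancel exactly against those coming from $(a-3/2)^2$. What survives is
\begin{equation*}
\det (\nabla^2 F_\beta)(\bs x)\;=\;ab+ac+bc\;-\;3(a+b+c)\;+\;\tfrac{27}{4}\;,
\end{equation*}
which is manifestly symmetric in $a$, $b$, $c$, hence in $x_0$, $x_1$, $x_2$.

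For the characteristic polynomial of $\mathbb A\cdot(\nabla^2 F_\beta)(\bs x)$, which for a $2\times2$ matrix $M$ is $\lambda^2-(\mathrm{tr}\,M)\,\lambda+\det M$, I would handle the trace and the determinant separately. Carrying out the product $\mathbb A\cdot(\nabla^2 F_\beta)$ and reading off the diagonal entries gives $\mathrm{tr}\big(\mathbb A\cdot(\nabla^2 F_\beta)(\bs x)\big)=a+b+c-\tfrac{9}{2}$, which is symmetric in $a$, $b$, $c$. For the constant coefficient, since $\det\mathbb A=1$ one has
\begin{equation*}
\det\big(\mathbb A\cdot(\nabla^2 F_\beta)(\bs x)\big)\;=\;\det\mathbb A\,\cdot\,\det(\nabla^2 F_\beta)(\bs x)\;=\;\det(\nabla^2 F_\beta)(\bs x)\;,
\end{equation*}
which was shown above to be symmetric. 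Thus both coefficients of the characteristic polynomial are symmetric in $a$, $b$, $c$, and the claim follows.

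The calculations are entirely routine; the only conceptual point — and the step most likely to cause confusion — is that symmetry must be read at the level of $(x_0,x_1,x_2)$ rather than of the coordinates $(x_1,x_2)$ in which the Hessian \eqref{hess} is expressed. That matrix singles out $x_0$, which appears in every entry, so neither the individual entries nor the off-diagonal term is itself symmetric; the symmetry emerges only after forming the combinations $\det$ and $\mathrm{tr}$, where the asymmetric pieces cancel. The identity $\det\mathbb A=1$ is precisely what guarantees that passing from $\nabla^2 F_\beta$ to $\mathbb A\cdot\nabla^2 F_\beta$ leaves the determinant — and hence its symmetry — undisturbed.
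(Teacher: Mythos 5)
Your proof is correct and follows essentially the same route as the paper: an explicit computation showing $\det(\nabla^2 F_\beta)$ is a symmetric function, combined with $\det\mathbb A=1$ and the symmetric trace $\mathrm{Tr}\big[\mathbb A\cdot(\nabla^2 F_\beta)(\bs x)\big]=\frac{1}{\beta}\sum_i x_i^{-1}-\frac{9}{2}$ to handle the characteristic polynomial. Your substitution $a=1/(\beta x_0)$, $b=1/(\beta x_1)$, $c=1/(\beta x_2)$ is a cosmetic repackaging of the same calculation, and your closing remark about reading symmetry in $(x_0,x_1,x_2)$ rather than $(x_1,x_2)$ is a fair observation but not a departure from the paper's argument.
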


\begin{proof}
Since
\begin{equation}
\label{dethess}
\det\left[(\nabla^{2}F_{\beta}) (\bs{x})\right]\;=\;
\frac{1}{\beta^{2}}\sum_{0\le i<j\le2}\frac{1}{x_{i}x_{j}} \;-\; 
\frac{3}{\beta}\sum_{i=0}^2 \frac{1}{x_{i}} \;+\; \frac{27}{4}\; ,
\end{equation}
the first assertion is in force. For the second one, since 
\begin{equation*}
\det\left[\mathbb{A}\cdot(\nabla^{2}F_{\beta})(\bs{x})\right]
\;=\;\det\mathbb{A}\cdot\det(\nabla^{2}F_{\beta})(\bs{x})
\;=\;\det(\nabla^{2}F_{\beta})(\bs{x})
\end{equation*}
is symmetric, it suffices to check the trace of
$\mathbb{A}\cdot(\nabla^{2}F_{\beta})(\bs{x})$ is symmetric. This is
obvious since
\begin{equation*}
\mbox{Tr}\left[\mathbb{A}\cdot (\nabla^{2}F_{\beta}) (\bs{x})\right]
\;=\;\frac{1}{\beta}\sum_{i=0}^2\frac{1}{x_{i}} \;-\; \frac{9}{2}\;\cdot
\end{equation*}
\end{proof}

Denote by $\nu(\bs{m}_{i}^{\beta})$, $i\in\mf{I}_{\beta}$, the
normalized asymptotic mass of the metastable set
$\mc{E}_{\beta}^{N}(i)$ (cf. \cite[display (2.9)]{LS}): 
\begin{equation}
\label{nu1}
\nu(\bs{m}_{i}^{\beta})\;=\;\lim_{N\rightarrow\infty}
\frac{\widehat{Z}_{N}(\beta)}{2\pi N} \, 
\exp\big\{ N\beta F_{\beta,N}(\bs{m}_{i}^{\beta})\big\} 
\, \nu_{\beta}^{N}(\mc{E}_{\beta}^{N}(i))\;.
\end{equation}
It is shown in Section 6 of \cite{LMT} that 
\begin{equation}
\label{nu2}
\nu(\bs{m}_{i}^{\beta}) \;=\; 
\frac{e^{-\beta  G_{\beta}(\bs{m}_{i}^{\beta})}}
{\sqrt{\beta^{2}\det\big[(\nabla^{2}F_{\beta})
(\bs{m}_{i}^{\beta})\big]}}\;\cdot
\end{equation}
Since, by Lemma \ref{lem3}, $\nu(\bs{m}_{0}^{\beta}) =
\nu(\bs{m}_{1}^{\beta}) = \nu(\bs{m}_{2}^{\beta})$, denote this value
by $\nu_{\beta}$, and let $\nu_{\beta}(3) := \nu(\bs{m}_{3}^{\beta})$.

Denote by $-\mu_{\beta}$ the negative eigenvalue of $\mathbb{A}\cdot
(\nabla^{2}F_{\beta}) (\bs{\sigma}_{\beta}^{i})$, $0\le i\le 2$. By
Lemma \ref{lem3}, this eigenvalue does not depend on $i$.  Denote by
$\omega_{\bs{\sigma}}$ the Eyring-Kramers constant of the saddle point
$\bs{\sigma}$ (cf. \cite[display (2.10) and Remark 2.9]{LS}):
\begin{equation}
\label{omega}
\omega(\bs{\sigma}_{i}^{\beta})\;=\;
e^{-\beta G_{\beta}(\bs{\sigma}_{i}^{\beta})} \, w(\bs{\sigma}_{i}^{\beta})\,
\frac{\mu_{\beta}}{\sqrt{-  \det\big[ (\nabla^{2}F_{\beta})
(\bs{\sigma}_{\beta}^{i})\big]}} \;.
\end{equation}
By Lemma \ref{lem3}, this quantity is independent of $i$. Hence,
let $\omega_{\beta}=\omega(\bs{\sigma}_{0}^{\beta})=
\omega(\bs{\sigma}_{1}^{\beta})=\omega(\bs{\sigma}_{2}^{\beta})$.

\smallskip\noindent\textbf{Regime I: $\beta\in(\beta_{1},\infty)$.} In
this range of temperatures, there are three valleys,
$\mc{E}_{\beta}^{N}(0)$, $\mc{E}_{\beta}^{N}(1)$ and
$\mc{E}_{\beta}^{N}(2)$, with same depth, and the process
$\bs{r}_{N}(t)$ exhibits a tunneling behavior between these three
valleys. The rigorous description can be stated as follows, in the
spirit of \cite{BL1,BL2,BL3}.

Define the projection map $\Psi_{N}: \Xi_{N} \rightarrow
\{0,\,1,\,2\}\cup\{N\}$ by
\begin{equation*}
\Psi_{N}(\bs{x})\;=\;\sum_{i=0}^{2} i\,\mathbf{1}
\{\bs{x}\in\mc{E}_{\beta}^{N}(i)\}
\;+\; N\,\mathbf{1}\left\{ \bs{x}\in \Delta_N \right\} \;,
\end{equation*}
where $\Delta_N = \Xi_{N} \setminus \cup_{0\le i\le 2}\,
\mc{E}_{\beta}^{N}(i)$. Let $\mathbb{X}_{N}(t)$ be the hidden Markov
chain defined by $\mathbb{X}_{N}(t)=\Psi_{N}(\bs{r}_{N}(t))$, and
denote by $\mathbf{P}_{\beta,k}^{(1)}$, $0\le k\le2$, the law of a
$\{0,1,2\}$-valued Markov chain which starts from $k$ and which jumps
from $i$ to $j$ at rate $r(i,j)=\omega_{\beta}/\nu_{\beta}$.  Next
theorem follows from \cite[Theorem 2.1]{LS} and from assertions (1)
and (2) of Proposition \ref{prow}.

\begin{theorem}
\label{thm1}
Fix $\beta\in(\beta_{1},\infty)$, $0\le i\le2$, and a sequence
$\{\bs{x}_{N} : N\ge 1\}$ such that
$\bs{x}_{N}\in\mc{E}_{\beta}^{N}(i)$ for all $N$. Then, under
$\mathbb{P}_{\bs{x}_{N}}^{N}$, the law of the rescaled hidden Markov
chain $\mathbb{X}_{N}(2\pi Ne^{\theta_{\beta}N}t)$ converges to
$\mathbf{P}_{\beta,i}^{(1)}$ in the soft topology \cite{Lan1}.
\end{theorem}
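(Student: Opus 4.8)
The plan is to reduce Theorem \ref{thm1} to the general tunneling result \cite[Theorem 2.1]{LS}, which asserts convergence of the hidden Markov chain in the soft topology once one verifies the structural hypotheses of that abstract theorem for the specific potential $F_\beta$ in the regime $\beta>\beta_1$. The main work is therefore not a self-contained argument but a matter of checking that the geometry of $F_\beta$ established in Propositions \ref{pro1} and \ref{prow} matches the template of \cite{LS}. First I would record that, by Proposition \ref{pro1}(5), the only local minima of $F_\beta$ are $\bs{m}_0^\beta,\bs{m}_1^\beta,\bs{m}_2^\beta$ (the point $\bs p$ is now a local \emph{maximum} and plays no role), and that the only saddle points are $\bs\sigma_0^\beta,\bs\sigma_1^\beta,\bs\sigma_2^\beta$. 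This produces exactly three metastable valleys $\mc E_\beta^N(i)$ of equal depth $\theta_\beta$, as required.

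Second, I would feed the saddle-point connectivity into the reduced-chain description. By Proposition \ref{prow}(2), $\mf S_{i,j}=\{\bs\sigma_k^\beta\}$ with $\{i,j,k\}=\{0,1,2\}$, so each saddle $\bs\sigma_k^\beta$ lies on the common boundary of exactly the two valleys $W_\beta(i)$ and $W_\beta(j)$ and separates them. Consequently the effective graph on $\{0,1,2\}$ is the complete graph, and every ordered pair $(i,j)$ of distinct states communicates through a single saddle. The jump rate of the limiting reduced chain is dictated by \cite[Theorem 2.1]{LS} to be the Eyring--Kramers ratio $\omega(\bs\sigma_k^\beta)/\nu(\bs m_i^\beta)$; by Lemma \ref{lem3} both the numerator and denominator are independent of the index (symmetry of $\det[(\nabla^2 F_\beta)(\bs x)]$ and of the characteristic polynomial of $\mathbb A\cdot(\nabla^2 F_\beta)(\bs x)$ under permuting $x_0,x_1,x_2$), so the common value is $r(i,j)=\omega_\beta/\nu_\beta$ for all $i\neq j$. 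This is precisely the generator of $\mathbf P^{(1)}_{\beta,i}$.

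Third, I would confirm the remaining quantitative inputs of \cite[Theorem 2.1]{LS}: that the negative eigenvalue of $\mathbb A\cdot(\nabla^2 F_\beta)(\bs\sigma_i^\beta)$ is well defined and index-independent (giving $\mu_\beta$, again via Lemma \ref{lem3}), that the weight $w(\bs x)=(x_0x_1x_2)^{1/3}$ and the correction $G_\beta$ evaluated at the critical points enter the constants $\nu_\beta$ and $\omega_\beta$ through \eqref{nu2} and \eqref{omega}, and that the chain $\bs r_N(t)$ is of the cyclic type covered by Remark 2.9 of \cite{LS}, which was already verified after \eqref{cd1}. Given all of this, the time rescaling by $2\pi N e^{\theta_\beta N}$ is exactly the normalization appearing in \cite{LS}: the factor $e^{\theta_\beta N}$ reflects the depth of the valleys and the prefactor $2\pi N$ comes from the normalization $\widehat Z_N(\beta)=2\pi N Z_N(\beta)$ in \eqref{inv2}. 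The convergence in the soft topology then follows verbatim from \cite[Theorem 2.1]{LS}.

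The step I expect to be the genuine obstacle is not any of the algebraic verifications above but checking that the abstract hypotheses of \cite{LS} are literally met by the discretized chain near the saddle points, in particular that each saddle $\bs\sigma_i^\beta$ is nondegenerate (so that the Eyring--Kramers constant $\omega_\beta$ in \eqref{omega} is finite and nonzero). Nondegeneracy follows from the strict sign computation in the proof of Proposition \ref{pro1}: the determinant of $(\nabla^2 F_\beta)(\bs\sigma_i^\beta)$ is strictly negative by \eqref{qq1}--\eqref{qq2}, so the Hessian has one strictly positive and one strictly negative eigenvalue for every $\beta>\beta_1$, and the matrix $\mathbb A\cdot(\nabla^2 F_\beta)(\bs\sigma_i^\beta)$ inherits a single negative eigenvalue $-\mu_\beta$ with $\mu_\beta>0$. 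Because these strict inequalities hold uniformly on compact $\beta$-intervals inside $(\beta_1,\infty)$, all constants are bounded and bounded away from zero, and the hypotheses of \cite[Theorem 2.1]{LS} hold with room to spare, completing the reduction.
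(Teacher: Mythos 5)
Your proposal is correct and follows essentially the same route as the paper, which proves Theorem \ref{thm1} simply by invoking \cite[Theorem 2.1]{LS} together with assertions (1) and (2) of Proposition \ref{prow}; your additional verifications (equal depths via Proposition \ref{pro1}(5), index-independence of $\nu_\beta$, $\omega_\beta$, $\mu_\beta$ via Lemma \ref{lem3}, the cyclic-decomposition structure from \eqref{cd1} and Remark 2.9 of \cite{LS}) are exactly the ingredients the paper has already assembled for that citation. One cosmetic slip: for $\beta>\beta_1$ the nondegeneracy of the saddles rests on \eqref{qq1} alone, since \eqref{qq2} concerns the regime $\beta<\beta_1$.
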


It is notable that the limiting Markov chain is reversible, while the
underlying dynamic is non-reversible. 

We may interpret Theorem \ref{thm1} in a more intuitive form. Consider
the process $\bs{r}_{N}(t)$ starting from a point $\bs{x}_{N}$ in the
valley $\mc{E}_{\beta}^{N}(i)$, and denote by
$H_{\mc{E}_{\beta}^{N}\setminus\mc{E}_{\beta}^{N}(i)}$ the hitting
time of one of the other valleys. Theorem \ref{thm1} asserts that
\begin{equation*}
\mathbb{E}^N_{\bs{x}_{N}}
\big[H_{\mc{E}_{\beta}^{N}\setminus\mc{E}_{\beta}^{N}(i)}\big]
\;=\;\left[1+o_{N}(1)\right]\,\frac{1}{2}\,
\frac{\nu_{\beta}}{\omega_{\beta}}\,
2\pi Ne^{\theta_{\beta}N} \;=\; 
\left[1+o_{N}(1)\right]\,\frac{\nu_{\beta}}{\omega_{\beta}}
\pi Ne^{\theta_{\beta}N}\;,
\end{equation*}
that under $\bb P^N_{\bs{x}_{N}}$
\begin{equation*}
\frac{\omega_{\beta}}{\nu_{\beta}} \, \frac 1{\pi N e^{\theta_{\beta}N}}\,
H_{\mc{E}_{\beta}^{N}\setminus\mc{E}_{\beta}^{N}(i)} \;\text{
  converges to a mean-one exponential random variable}\;,
\end{equation*}
and that $\bs{r}_{N}(t)$ jumps to one of the other two valleys with
asymptotically equal probability.

\smallskip\noindent\textbf{Regime II:
  $\beta\in(\beta_{2},\beta_{1})$.} In this range of temperatures,
there are four metastable sets and two time-scales. In the time scale
$2\pi N \exp\{\theta_{\beta}(3)N\}$, starting from a point in the
valley $\mc{E}_{\beta}^{N}(3)$, after an exponential time the process
jumps to one of the other three valleys $\mc{E}_{\beta}^{N}(i)$, $0\le
i\le2$, and remains there for ever (in this time scale).  In the
longer time scale $2\pi N \exp\{\theta_{\beta}N\}$ the process
exhibits a tunneling behavior between the metastable sets
$\mc{E}_{\beta}^{N}(i)$, $0\le i\le2$.

A rigorous statement requires some notations. Define the projection
map 
\begin{equation*}
\widehat{\Psi}_{N}(\bs{x})\;=\;\sum_{i=0}^{3}i\,
\mathbf{1}\{\bs{x}\in\mc{E}_{\beta}^{N}(i)\}
\;+\; N\,\mathbf{1}\big\{ \bs{x}\in \widehat{\Delta}_N \big\} \;,
\end{equation*}
where $\widehat{\Delta}_N = \Xi_{N}\setminus\cup_{0\le i\le 3}\,
\mc{E}_{\beta}^{N}(i)$.  Let $\widehat{\mathbb{X}}_{N}(t) =
\widehat{\Psi}_{N} (\bs{r}_{N}(t))$, and recall that we represent by
$\mathbb{X}_{N}(t)$ the process $\Psi_{N} (\bs{r}_{N}(t))$.  Denote by
$\mathbf{P}_{\beta,k}^{(2)}$ the law of the $\{0,1,2\}$-valued Markov
chain which starts from $k$ and whose jump rates are given by
$r(i,j)=\omega_{\beta}/(3\nu_{\beta})$, $0\le i\neq j\le2$. Similarly,
denote by $\mathbf{Q}_{\beta,k}^{(2)}$ the law of the
$\{0,1,2,3\}$-valued Markov chain which starts from $k$ and whose jump
rates are given by
\begin{equation*}
r(i,j)\;=\;\mathbf{1}\{i=3\} \,
\frac{\omega_{\beta}}{\nu_{\beta}(3)}\;, \quad 0\le i\neq j\le3\;.
\end{equation*}
Note that the points $0$, $1$, $2$ are absorbing for the chain
$\mathbf{Q}_{\beta,k}^{(2)}$.

Next theorem follows from \cite[Theorem 2.1]{LS}, from \cite[displays
(2.12), (2.13)]{LS}, and from assertions (4) and (5) of Proposition
\ref{prow}.

\begin{theorem}
\label{thm2} 
Fix $\beta\in(\beta_{2},\beta_{1})$, $0\le i\le3$, $0\le j\le 2$ and
sequences $\{\bs{x}_{N} : N\ge 1\}$, $\{\bs{y}_{N} : N\ge 1\}$ such
that $\bs{x}_{N}\in\mc{E}_{\beta}^{N}(i)$,
$\bs{y}_{N}\in\mc{E}_{\beta}^{N}(j)$ for all $N$. Then, the law of
rescaled process $\mathbb{\widehat{X}}_{N}(2\pi
Ne^{\theta_{\beta}(3)N}t)$ under $\mathbb{P}_{\bs{x}_{N}}^{N}$
converges to $\mathbf{Q}_{\beta,i}^{(2)}$ in the soft topology, and
the law of rescaled process $\mathbb{X}_{N}(2\pi
Ne^{\theta_{\beta}N}t)$ under $\mathbb{P}_{\bs{y}_{N}}^{N}$ converges
to $\mathbf{P}_{\beta,j}^{(2)}$ in the soft topology.
\end{theorem}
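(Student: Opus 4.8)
The plan is to obtain both assertions from the general metastability theorem \cite[Theorem 2.1]{LS}, specialized to the cyclic random walk $\bs r_N(t)$ in the potential $F_{\beta,N}$ with weights $w_N$, which was already identified as an instance of the model of \cite[Remark 2.9]{LS}. All the structural input is supplied by Proposition \ref{prow} and Lemma \ref{lem01}. In the regime $\beta\in(\beta_2,\beta_1)$, Lemma \ref{lem01} gives $h_\beta<0=F_\beta(\bs p)$, so the three energetic wells $W_\beta(i)$, $0\le i\le 2$, of depth $\theta_\beta=\beta(H_\beta-h_\beta)$ are strictly deeper than the entropic well $W_\beta(3)$ of depth $\theta_\beta(3)=\beta H_\beta$; indeed $\theta_\beta-\theta_\beta(3)=-\beta h_\beta>0$, which separates the two time scales. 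By assertions (4) and (5) of Proposition \ref{prow}, the communication graph is a star: $\mf S_{i,j}=\varnothing$ for $0\le i\neq j\le 2$, while $\mf S_{i,3}=\{\bs\sigma_i^\beta\}$, so each energetic well touches the entropic well through a single saddle $\bs\sigma_i^\beta$ and no two energetic wells touch directly.

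First I would treat the shorter time scale $2\pi N e^{\theta_\beta(3)N}$. On this scale only the shallowest well $W_\beta(3)$ is depleted, the strictly deeper energetic wells remaining frozen (hence absorbing). Applying \cite[Theorem 2.1]{LS} with $W_\beta(3)$ as the unique well being exited, the reduced chain has $0,1,2$ absorbing and leaves $3$ through the three saddles $\bs\sigma_i^\beta$. The rate to a prescribed energetic well is the Eyring--Kramers constant $\omega(\bs\sigma_i^\beta)=\omega_\beta$, which is independent of $i$ by Lemma \ref{lem3}, divided by the normalized mass $\nu_\beta(3)$ of $W_\beta(3)$ from \eqref{nu2}, namely $\omega_\beta/\nu_\beta(3)$, exactly the rate of $\mathbf Q^{(2)}_{\beta,i}$. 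This yields the first convergence, for any starting well $0\le i\le 3$.

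Next I would treat the longer time scale $2\pi N e^{\theta_\beta N}$, at which the three energetic wells equilibrate among themselves. Here $W_\beta(3)$ is neither frozen nor a genuine metastable target but a transient \emph{transit} state: the process leaves an energetic well $i$ through $\bs\sigma_i^\beta$, reaches $W_\beta(3)$, and—because on this scale its sojourn there is negligible, as already controlled on the shorter scale—is immediately redistributed among the three energetic wells with equal probability $1/3$. The formalism of \cite[displays (2.12), (2.13)]{LS} converts this two-step mechanism into effective rates: the escape rate from $i$ is $\omega_\beta/\nu_\beta$, with $\nu_\beta$ the common mass of the energetic wells from \eqref{nu2}, and composing with the probability $1/3$ of landing in a prescribed well gives $r(i,j)=\omega_\beta/(3\nu_\beta)$ for $0\le i\neq j\le 2$, the rate of $\mathbf P^{(2)}_{\beta,j}$. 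Convergence of $\mathbb X_N(\cdot)$ in the soft topology then follows.

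The main obstacle is the rigorous justification of this longer-scale statement, where the entropic well acts as a negligibly visited relay rather than a well of its own. One must verify the hypotheses of \cite[displays (2.12),(2.13)]{LS} for this particular potential—in particular that the effective transition mechanism between two energetic wells factorizes through $W_\beta(3)$ with the correct combinatorial weight, and that the time spent in $W_\beta(3)$ on the scale $e^{\theta_\beta N}$ is $o_N(1)$—so that the $1/3$ redistribution factor emerges and the soft-topology limit is the complete-graph chain $\mathbf P^{(2)}$. The remaining inputs, namely the well-definedness and $i$-independence of $\nu_\beta$, $\nu_\beta(3)$ and $\omega_\beta$, are immediate from Lemma \ref{lem3} together with \eqref{nu2} and \eqref{omega}.
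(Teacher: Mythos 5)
Your proposal is correct and follows essentially the same route as the paper, whose proof of Theorem \ref{thm2} is precisely the one-line deduction from \cite[Theorem 2.1]{LS} and \cite[displays (2.12), (2.13)]{LS} combined with assertions (4) and (5) of Proposition \ref{prow}, with the rate identifications $\omega_{\beta}/\nu_{\beta}(3)$ and $\omega_{\beta}/(3\nu_{\beta})$ coming from Lemma \ref{lem3}, \eqref{nu2} and \eqref{omega} exactly as you describe. Your added observations---the time-scale separation $\theta_{\beta}-\theta_{\beta}(3)=-\beta h_{\beta}>0$ from Lemma \ref{lem01} and the factor $1/3$ arising from the star-shaped valley structure---are faithful elaborations of what the paper delegates to \cite{LS}.
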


Therefore, in the time scale $2\pi N \exp\{\theta_{\beta}(3)N\}$,
starting from a point in $\mc{E}_{\beta}^{N}(3)$, after a mean
$\nu_{\beta}(3)/3\omega_{\beta}$ exponential time, the chain jumps to
one of the deeper valleys $\mc{E}_{\beta}^{N}(i)$, $0\le i\le2$, with
equal probability. After this jump, in the time scale $2\pi N
\exp\{\theta_{\beta}(3)N\}$, the chain is trapped in the deeper
valley reached.

In the time scale $2\pi N \exp\{\theta_{\beta}N\}$, the process
exhibits a tunneling behavior, similar to the one observed in regime
I, with the notable difference that the jump rate is dropped by a
factor $3$. 

The discontinuity of the jump rate is due to the change of the
inter-valley structure. While in regime I, the valleys
$\mc{E}_{\beta}^{N}(0)$ and $\mc{E}_{\beta}^{N}(1)$ are connected
directly by the saddle point $\bs{\sigma}_{\beta}^{2}$, in regime II
these two valleys are indirectly connected via the shallower valley
$\mc{E}_{\beta}^{N}(3)$, the process has to overcome the two saddle
points $\bs{\sigma}_{\beta}^{0}$ and $\bs{\sigma}_{\beta}^{1}$ in
order to make transition from $\mc{E}_{\beta}^{N}(0)$ to
$\mc{E}_{\beta}^{N}(1)$. After reaching the well
$\mc{E}_{\beta}^{N}(3)$, the chain may return to
$\mc{E}_{\beta}^{N}(0)$ before reaching $\mc{E}_{\beta}^{N}(1)$,
slowing down the transition rate between the valleys.

\smallskip\noindent\textbf{Regime III:
  $\beta\in(\beta_{3},\beta_{2})$.} In this regime, there are three
metastable sets and one stable set. In the time scale $2\pi N
\exp\{\theta_{\beta}N\}$, starting from a point in one of the valleys
$\mc{E}_{\beta}^{N}(i)$, $0\le i\le 2$, after an exponential time the
process jumps to the well $\mc{E}_{\beta}^{N}(3)$ and stays there for
ever.

To describe this metastable behavior more precisely, denote by
$\mathbf{Q}_{\beta,k}^{(3)}$ the $\{0,1,2,$ $3\}$-valued Markov chain
starting from $k$ whose jump rates are given by
\begin{equation*}
r(i,j)\;=\;\mathbf{1}\{j=3\} \,
\frac{\omega_{\beta}}{\nu_{\beta}}\;, \quad 0\le i\neq j\le3\;.
\end{equation*}
Note that the point $3$ is an absorbing state for this chain.

Recall the definition of the process $\mathbb{\widehat{X}}_{N}(t)$
introduced in the previous regime. 

\begin{theorem}
\label{thm3}
Fix $\beta\in(\beta_{3},\beta_{2})$, $0\le i\le3$, and a sequence
$\{\bs{x}_{N} : N\ge 1\}$ such that
$\bs{x}_{N}\in\mc{E}_{\beta}^{N}(i)$ for all $N$. Then, under
$\mathbb{P}_{\bs{x}_{N}}^{N}$, the law of rescaled process
$\mathbb{\widehat{X}}_{N}(2\pi Ne^{\theta_{\beta}N}t)$ converges to
$\mathbf{Q}_{\beta,i}^{(3)}$ in the soft topology.
\end{theorem}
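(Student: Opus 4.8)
The plan is to deduce Theorem \ref{thm3} from \cite[Theorem 2.1]{LS} in the same manner as Theorems \ref{thm1} and \ref{thm2}, feeding the general metastability result with the geometric and energetic data already assembled for this temperature regime. The decisive structural input is the depth ordering. Since $\beta\in(\beta_{3},\beta_{2})$, Lemma \ref{lem01} gives $h_{\beta}>0$, so that $F_{\beta}(\bs{m}_{i}^{\beta})>0=F_{\beta}(\bs{p})$, and therefore, by \eqref{depth},
\begin{equation*}
\theta_{\beta}\;=\;\beta(H_{\beta}-h_{\beta})\;<\;\beta H_{\beta}\;=\;\theta_{\beta}(3)\;.
\end{equation*}
Consequently, on the time scale $2\pi N e^{\theta_{\beta}N}$ the three energetic valleys $\mc{E}_{\beta}^{N}(i)$, $0\le i\le2$, are the shallow metastable sets which the process escapes after a time of order one on the rescaled clock, while the entropic valley $\mc{E}_{\beta}^{N}(3)$ is strictly deeper and acts, on this scale, as a trap from which the chain does not escape.

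First I would record the inter-valley geometry. Assertions (4) and (5) of Proposition \ref{prow}, which apply since $(\beta_{3},\beta_{2})\subset(\beta_{3},\beta_{1})$, state that the four valleys $W_{\beta}(i)$, $0\le i\le3$, are distinct, that the energetic valleys are pairwise disconnected, $\mf{S}_{i,j}=\varnothing$ for $0\le i\neq j\le2$, and that each energetic valley communicates with the deep valley through a single saddle, $\mf{S}_{i,3}=\{\bs{\sigma}_{i}^{\beta}\}$ for $0\le i\le2$. This fixes the graph of the reduced dynamics: a star centered at $3$, in which the only admissible transitions go from each energetic state to the center, which is precisely the support of the jump rates defining $\mathbf{Q}_{\beta,i}^{(3)}$.

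Next I would identify the limiting rates. By Lemma \ref{lem3} the determinant of $(\nabla^{2}F_{\beta})(\bs{x})$ and the characteristic polynomial of $\mathbb{A}\cdot(\nabla^{2}F_{\beta})(\bs{x})$ are symmetric in $x_{0},x_{1},x_{2}$, so the mass $\nu(\bs{m}_{i}^{\beta})$ in \eqref{nu2} and the Eyring--Kramers constant $\omega(\bs{\sigma}_{i}^{\beta})$ in \eqref{omega} are independent of $i$; denote them $\nu_{\beta}$ and $\omega_{\beta}$. Because each energetic valley $W_{\beta}(i)$ borders exactly one saddle, namely $\bs{\sigma}_{i}^{\beta}$ leading into $W_{\beta}(3)$, feeding these quantities into the rate formula of \cite[Theorem 2.1]{LS} together with \cite[displays (2.12), (2.13)]{LS} yields a single escape channel out of $\mc{E}_{\beta}^{N}(i)$ into $\mc{E}_{\beta}^{N}(3)$ with rate $\omega_{\beta}/\nu_{\beta}$, no direct transition between distinct energetic valleys (their closures do not meet), and a negligible escape rate out of $\mc{E}_{\beta}^{N}(3)$ on this scale owing to $\theta_{\beta}(3)>\theta_{\beta}$. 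This is exactly $\mathbf{Q}_{\beta,i}^{(3)}$, with $3$ absorbing. The convergence of $\widehat{\mathbb{X}}_{N}(2\pi N e^{\theta_{\beta}N}t)=\widehat{\Psi}_{N}(\bs{r}_{N}(2\pi N e^{\theta_{\beta}N}t))$ to $\mathbf{Q}_{\beta,i}^{(3)}$ in the soft topology \cite{Lan1} is then the output of the martingale method of \cite{BL1,BL2,BL3} as packaged in \cite{LS}, uniformly over starting sequences $\bs{x}_{N}\in\mc{E}_{\beta}^{N}(i)$.

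The main obstacle is to verify that the entropic valley genuinely behaves as an absorbing state on the time scale $2\pi N e^{\theta_{\beta}N}$, that is, that once the chain reaches $\mc{E}_{\beta}^{N}(3)$ it remains there throughout the observation window with probability tending to one. This amounts to showing that the exit time of $\mc{E}_{\beta}^{N}(3)$ is of order $2\pi N e^{\theta_{\beta}(3)N}$ and hence, by the strict separation $\theta_{\beta}(3)>\theta_{\beta}$, diverges relative to the present scale; this is the step where the hypotheses of \cite[Theorem 2.1]{LS} must be matched carefully, together with the a priori control of the time spent in the bridge set $\widehat{\Delta}_{N}$ and the handling of the trivial initial condition $i=3$, for which the limiting trajectory is the constant path at $3$.
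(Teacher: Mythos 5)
Your proposal is correct and takes essentially the same route as the paper, which deduces Theorem \ref{thm3} from \cite[Theorem 2.1]{LS} exactly as it does Theorems \ref{thm1} and \ref{thm2}: the depth ordering $\theta_{\beta}<\theta_{\beta}(3)$ obtained from Lemma \ref{lem01} via \eqref{depth}, the star-shaped valley geometry with $\mf{S}_{i,j}=\varnothing$ and $\mf{S}_{i,3}=\{\bs{\sigma}_{i}^{\beta}\}$ from assertions (4) and (5) of Proposition \ref{prow}, and the $i$-independence of $\nu_{\beta}$ and $\omega_{\beta}$ from Lemma \ref{lem3} are precisely the ingredients the paper assembles in Section \ref{sec4} to identify the absorbing-center chain $\mathbf{Q}_{\beta,i}^{(3)}$. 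Your closing concern about the deep valley acting as a trap on the scale $2\pi Ne^{\theta_{\beta}N}$ is resolved exactly as you indicate, by the strict separation of depths fed into the hypotheses of \cite[Theorem 2.1]{LS}.
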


\smallskip\noindent\textbf{Dynamics at the critical temperatures.}
For $\beta=\beta_{3}$, the point $\bs{p}$ is the unique minima, which
is the global minima, and thus no metastability or tunneling
phenomenon occurs.

For $\beta=\beta_{2}$, the four metastable sets $\mc{E}_{N}(i)$,
$0\le i\le3$, have the same depth, i.e., $\theta_{\beta}=\theta_{\beta}(3)$,
and the process exhibits the tunneling behavior among them. 

Denote by $\mathbf{P}_{\beta,k}^{(4)}$ the $\{0,\,1,\,2,\,3\}$-valued
Markov chain starting from $k$ whose jump rates are given by
\begin{equation*}
r(i,j)\;=\;\mathbf{1}\{ j=3\}\,
\frac{\omega_{\beta}}{\nu_{\beta}} \;+\;
\mathbf{1}\{i=3\} \, \frac{\omega_{\beta}}{\nu_{\beta}(3)}\;, \quad
0\le i\neq j\le3\;,
\end{equation*}
and recall the definition of the process
$\mathbb{\widehat{X}}_{N}(t)$.

\begin{theorem}
\label{thm4}
Fix $\beta=\beta_{2}$, $0\le i\le 3$, and a sequence $\{\bs{x}_{N} :
N\ge 1\}$ such that $\bs{x}_{N}\in\mc{E}_{\beta}^{N}(i)$ for all
$N$. Then, under $\mathbb{P}_{\bs{x}_{N}}^{N}$, the law of rescaled
process $\mathbb{\widehat{X}}_{N}(2\pi Ne^{\theta_{\beta}N}t)$
converges to $\mathbf{P}_{\beta,i}^{(4)}$ in the soft topology.
\end{theorem}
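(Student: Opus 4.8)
The plan is to deduce the result from \cite[Theorem 2.1]{LS} in essentially the same manner as Theorems \ref{thm1}--\ref{thm3}, the new feature being that at the critical value $\beta_2$ the four wells acquire a \emph{common} depth, so that the two time scales of Regime II merge into the single scale $2\pi N e^{\theta_\beta N}$.

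First I would establish the equality of depths. By Lemma \ref{lem01} one has $h_{\beta_2}=0$, whence $H_\beta-h_\beta=H_\beta$ and therefore $\theta_\beta=\beta(H_\beta-h_\beta)=\beta H_\beta=\theta_\beta(3)$. Thus all four valleys $W_\beta(i)$, $0\le i\le 3$, share the depth $\theta_\beta$, and the chain should tunnel among them on a single time scale rather than on the two distinct scales of Regime II.

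Next I would record the geometry. Since $\beta_2\in(\beta_3,\beta_1)$, assertions (4) and (5) of Proposition \ref{prow} apply: the four valleys are distinct, $\mf{S}_{i,j}=\varnothing$ for $0\le i\neq j\le 2$, and each energetic well is joined to the entropic well through a single saddle, $\mf{S}_{i,3}=\{\bs{\sigma}_i^\beta\}$. By Proposition \ref{pro1} the saddles $\bs{\sigma}_i^\beta$ are non-degenerate and the minima $\bs{m}_0^\beta,\bs{m}_1^\beta,\bs{m}_2^\beta,\bs{p}$ are non-degenerate throughout $(\beta_3,\beta_1)$, hence at $\beta_2$. These are exactly the hypotheses of \cite[Theorem 2.1]{LS}; the only criticality at $\beta_2$ is the coincidence of depths, which is precisely the situation that result is designed to treat.

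Applying \cite[Theorem 2.1]{LS} to the metastable sets $\mc{E}_\beta^N(i)$, $0\le i\le 3$, the rescaled hidden chain $\widehat{\mathbb{X}}_N(2\pi N e^{\theta_\beta N}t)$ converges in the soft topology to the Markov chain whose rate from well $a$ to well $b$ is the Eyring-Kramers constant of the connecting saddle divided by the mass of $a$. Reading off \eqref{nu2} and \eqref{omega}, for $i\in\{0,1,2\}$ the rate $i\to 3$ equals $\omega(\bs{\sigma}_i^\beta)/\nu(\bs{m}_i^\beta)=\omega_\beta/\nu_\beta$ and the rate $3\to i$ equals $\omega(\bs{\sigma}_i^\beta)/\nu(\bs{m}_3^\beta)=\omega_\beta/\nu_\beta(3)$, while the rate between distinct energetic wells vanishes because no saddle joins two of them. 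These are exactly the rates $r(i,j)=\mathbf{1}\{j=3\}\,\omega_\beta/\nu_\beta+\mathbf{1}\{i=3\}\,\omega_\beta/\nu_\beta(3)$ defining $\mathbf{P}_{\beta,i}^{(4)}$. The main point requiring care is the verification that the two Regime II time scales genuinely collapse to one at $\beta_2$ and that \cite[Theorem 2.1]{LS} applies despite the equality of depths; this is guaranteed since all relevant critical points stay non-degenerate at $\beta_2$, so the depth coincidence is the sole special feature, and it is exactly the one the theorem handles.
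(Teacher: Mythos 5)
Your proposal is correct and follows essentially the same route as the paper: the depth coincidence $\theta_\beta=\theta_\beta(3)$ from Lemma \ref{lem01} (via $h_{\beta_2}=0$), the valley geometry from assertions (4) and (5) of Proposition \ref{prow} together with the non-degeneracy of the critical points at $\beta_2\in(\beta_3,\beta_1)$, and an application of \cite[Theorem 2.1]{LS} with the rates read off from \eqref{nu2} and \eqref{omega} is precisely how the paper obtains Theorem \ref{thm4}, in parallel with Theorems \ref{thm1}--\ref{thm3}.
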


For $\beta=\beta_{1}$, the metastable behavior cannot be obtained by
the approach presented in \cite{LS}. We can expect that the process
exhibits a tunneling behavior among the three metastable valleys
$\mc{E}_{\beta}^{N}(i)$, $0\le i\le 2$. In view of assertion (3) of
Proposition \ref{prow}, the transitions may occur by crossing the
point $\bs{p}$. But, $\bs{p}$ is not a saddle point. Instead, the
Hessian at $\bs{p}$ is the zero matrix, and therefore the potential is
flat around $\bs{p}$. In consequence, we expect that the process
behaves like a diffusion around $\bs{p}$.  However, the precise jump
rates cannot be computed by the method of \cite{LS}, and the
derivation of the metastable behavior of this chain for
$\beta=\beta_1$ requires new ideas.

\section{Non-zero External Magnetic Field}
\label{sec5}

We examine in this section the metastable behavior of the mean-field
Potts model with an external field.

In Subsection \ref{sec51}, with perturbative arguments, we extend the
results of the previous section to the case in which the external
field is small. Even though the assertions are not stated as theorems,
all results presented in this subsection are rigorous and can be
formulated as the ones in the previous section.

In Subsections \ref{sec52} and \ref{sec53}, we present the metastable
behavior of the Potts model in the cases where $\beta>2$,
$\theta_\text{e}=(2k+1)\pi/3$ and $\theta_\text{e}=2k\pi/3$,
respectively. For large enough $r$, as the external field tilts the
potential significantly, it is not difficult to guess the metastable
behavior of the system. The interesting question is the existence of
intermediate regimes between small and large external fields. In the
case $\theta_\text{e}=(2k+1)\pi/3$, $k\in\mathbb{Z}$, there are
indeed two critical strengths of the external field, $0<r^\beta_1<
r^\beta_2<\infty$, and a new regime appears for $r\in
(r_1^\beta,\,r_2^\beta)$. In contrast, for $\theta_\text{e}=2k\pi/3$,
$k\in\mathbb{Z}$, there is only one critical strength of the external
field and we do not observe intermediate regimes.

In the case $\theta_\text{e} \neq k\pi/3$, $k\in\mathbb{Z}$, although
we can derive the metastable behavior of the system by numerical
computations, it seems impossible to obtain rigorous results in a
concrete form. The reason for the lack of rigorous results in the
general case is that the method to derive the metastable behavior
requires the identification of the critical points of the potential,
and the computation of the eigenvalues of the Hessian of the potential
at the critical points. The equations for the critical points, which
in the case of zero external field corresponds to the identities
\eqref{cr1}, can not be solved explicitly in the case of a positive
external field, at least in general. This case is thus left to the
realm of numerical computations.

\subsection{Small external fields}
\label{sec51}

The case of a small external field can be examined by perturbative
arguments. The external field may break the spin symmetry by favoring
one or two values. There are many different possible regimes depending
on the orientation of the external magnetic field and on the value of
the temperature. We examine in this subsection three cases at low
temperature to eliminate the entropic set. A similar analysis can be
carried out for temperatures lying in the intervals
$(\beta_{2},\beta_{1})$ and $(\beta_{3},\beta_{2})$.

Assume that $\beta>\beta_1$. By symmetry, there are three cases to be
considered: the case in which the external field is aligned with one
spin, creating one stable set and two symmetric metastable sets, the
case in which the external field takes the mean value between two
spins, and the case in which it takes any other value.

The structure of the potential $F_{\beta} $, presented in Proposition
\ref{pro1}, is not perturbed significantly if the external field is
small. Fix an angle $\theta_{\text{e}}$ and regard the potential
$F_{\beta}$ as a function of $\bs{x}=(x_{1},\,x_{2})$ and
$r=r_{\text{e}}$:
\begin{equation}
\label{F_b}
F_{\beta}(\bs{x},r)\;=\;F_{\beta}(\bs{x}) \;-\;
r\sum_{i=0}^{2} x_{i} \, \cos \left(\theta_{\text{e }}-
\frac{2\pi i}{3}\right)\;,
\end{equation}
where $F_{\beta}$ is the potential introduced in (\ref{pot2}).  Let
$K_{\beta}:\Xi\times[0,\infty)\rightarrow\mathbb{R}^{2}$ be given by
\begin{equation*}
K_{\beta}(\bs{x},r)\;=\; \big(\partial_{x_{1}}F_{\beta}(\bs{x},r),
\,\partial_{x_{2}}F_{\beta}(\bs{x},r)\big)\;,
\end{equation*}
where $\partial_{x_i}F_{\beta}$ represents the partial derivative
of $F_{\beta}$ with respect to $x_i$.

Recall the definition of the point $\bs{m}_{0}^{\beta}$ introduced in
\eqref{01}. By Proposition \ref{pro1} and by its proof,
$K(\bs{m}_{0}^{\beta},0)=0$ and the Jacobian of $K_{\beta}(\cdot,0)$
at $\bs{m}_{0}^{\beta}$, which is the Hessian of $F_{\beta}(\cdot,0)$
at $\bs{m}_{0}^{\beta}$, is non-degenerate.  Hence, by the implicit
function theorem, there exist $\epsilon = \epsilon(\beta)>0$ and a
smooth function $\bs{m}_{0}^{\beta}(\cdot): [0,\epsilon) \rightarrow
\mathbb{R}^{2}$ such that
\begin{equation*}
\bs{m}_{0}^{\beta}(0)=\bs{m}_{0}^{\beta}\;\;\;\mbox{and}\;\;\;
K_{\beta}\big(\bs{m}_{0}^{\beta}(r),r\big)\;=\;0\;,\;\;
\forall \, r\in[0,\epsilon)\;.
\end{equation*}
In other words, $\bs{m}_{0}^{\beta}(r)$ is a critical point of
$F_{\beta}(\cdot,r)$. The same argument can be applied to the other
critical points $\bs{p}$, $\bs{m}_{i}^{\beta}$,
$\bs{\sigma}_{i}^{\beta}$, $0\le i\le 2$. Moreover, no new critical
points appear for small enough $r_{\text{e}}$.

Let $\phi_{i}^{\beta}(r)=F_{\beta}(\bs{m}_{i}^{\beta}(r),r)$ and
$\psi_{i}^{\beta}(r)=F_{\beta}(\bs{\sigma}_{i}^{\beta}(r),r)$, and
recall the definition of the points $p_\beta$, $q_\beta$ introduced
just above \eqref{01}.

\begin{lemma}
\label{lem4}
For $0\le i\le2$,
\begin{equation*}
\frac{\textup{d}\phi_{i}^{\beta}}{\textup{d}r}(0)
\;=\;(3p_{\beta}-1) \cos \Big(\theta_{\textup{e}} 
-\frac{2\pi i}{3} \Big)\;, \quad
\frac{\textup{d}\psi_{i}^{\beta}}{\textup{d}r}(0)
\;=\;(3q_{\beta}-1)\cos \Big(\theta_{\textup{e}}
- \frac{2\pi i}{3} \Big)\;,
\end{equation*}
\end{lemma}

\begin{proof}
We present the computations for $i=0$, the other ones being analogous.
By the definition of $\bs{m}_{0}^{\beta}$, by \eqref{F_b}, and by the
chain rule,
\begin{align*}
\frac{\textup{d}\phi_{0}^{\beta}}{\textup{d}r}(0) \; &=\; 
(\nabla_{r}\bs{m}_{0}^{\beta})(0) \cdot 
K_{\beta}\big(\bs{m}_{0}^{\beta}(0),0\big) 
\;-\; (1-2p_{\beta})\, \cos\theta_{\text{e }} \\
\;&-\;  p_{\beta}\, \cos\Big(\theta_{\text{e }}-\frac{2\pi}{3}\Big)
\;-\; p_{\beta}\cos\Big(\theta_{\text{e }}-\frac{4\pi}{3}\Big)\;.
\end{align*}
By definition of $\bs{m}_{0}^{\beta} (0)$, the first term vanishes.
The other terms can be computed to provide the first identity of the
lemma.  The calculations for the second identity are similar.
\end{proof}

We are now in a position to present the metastable behavior of the
magnetization under a small external magnetic field. Recall from the
previous section that for $\beta>\beta_1$ and $r_{\text{e}}=0$,
$(1/3,1/3)$ is a local maximum, the points $\bs{m}_{i}^{\beta}$ are
local minima, and the points $\bs{\sigma}_{i}^{\beta}$ are saddle
points.  All local minima are at the same height, as well as all
saddle points.

\smallskip
\noindent\textbf{Case I: $\theta_{\text{e}}=2k\pi/3$, $k=0,\,1,\,2$.}
To fix ideas, suppose that $k=0$.  By Lemma \ref{lem4}, and since
$F_{\beta}(\bs{m}_{0}^{\beta}) = F_{\beta}(\bs{m}_{k}^{\beta})$,
$F_{\beta}(\bs{\sigma}_{0}^{\beta}) =
F_{\beta}(\bs{\sigma}_{k}^{\beta})$, $k=1$, $2$, and $p_{\beta}< 1/3
<q_{\beta}$, there exists $\epsilon (\beta)>0$ such that for all $r_{\text{e}}< \epsilon (\beta)$, 
\begin{equation}
\label{m11}
\begin{aligned}
& F_{\beta}(\bs{m}_{0}^{\beta}(r_{\text{e}}),r_{\text{e}})
\;<\;F_{\beta}(\bs{m}_{1}^{\beta}(r_{\text{e}}),r_{\text{e}})
\;=\;F_{\beta}(\bs{m}_{2}^{\beta}(r_{\text{e}}),r_{\text{e}})\;,\\
&\quad F_{\beta}(\bs{\sigma}_{0}^{\beta}(r_{\text{e}}),r_{\text{e}})
\;>\;F_{\beta}(\bs{\sigma}_{1}^{\beta}(r_{\text{e}}),r_{\text{e}})
\;=\;F_{\beta}(\bs{\sigma}_{2}^{\beta}(r_{\text{e}}),r_{\text{e}})\;.
\end{aligned}
\end{equation}

By (\ref{m11}), for $0 <r_{\text{e}}< \epsilon (\beta)$,
$\bs{m}_{1}^{\beta}(r_{\text{e}})$ and
$\bs{m}_{2}^{\beta}(r_{\text{e}})$ are the bottom points of metastable
sets with the same height, and $\bs{m}_{0}^{\beta}(r_{\text{e}})$,
being the global minima, is the bottom point of a stable set. By
(\ref{m11}), in an appropriate time scale, starting in a neighborhood
of $\bs{m}_{1}^{\beta} (r_{\text{e}})$, after an exponential time, the
chain $\bs r_N(t)$ jumps to $\bs{m}_{0}^{\beta} (r_{\text{e}})$ by
crossing the saddle point $\bs{\sigma}_{2}^{\beta}(r_{\text{e}})$. The
expectation of the transition time is given by
\begin{equation}
\label{et1}
\left[1+o_{N}(1)\right]\,2\pi N\,
\frac{\nu(\bs{m}_{1}^{\beta}(r_{\text{e}}))}
{\omega(\bs{\sigma}_{2}^{\beta}(r_{\text{e}}))}
\,\exp\left\{ N[F_{\beta}(\bs{\sigma}_{2}^{\beta}
(r_{\text{e}}),r_{\text{e}})-F_{\beta}(\bs{m}_{1}^{\beta}
(r_{\text{e}}),r_{\text{e}})]\right\} \;,
\end{equation}
where $\nu(\bs{m}_{1}^{\beta}(r_{\text{e}}))$ and
$\omega(\bs{\sigma}_{2}^{\beta}(r_{\text{e}}))$ are defined as in
(\ref{nu2}) and (\ref{omega}), respectively.  

The metastable behavior of this model is thus described by a $3$-state
Markov chain with one absorbing point and two other points which may
jump only to the absorbing point.

\smallskip
\noindent\textbf{Case II: $\theta_{\text{e}}=(2k+1)\pi/3$,
  $k=0,\,1,\,2$}. 
Suppose, without loss of generality, that $k=0$. Then, by the same argument as Case I, we have that
\begin{gather*}
F_{\beta}(\bs{m}_{0}^{\beta}
(r_{\text{e}}),r_{\text{e}})\;=\;
F_{\beta}(\bs{m}_{1}^{\beta}(r_{\text{e}}),r_{\text{e}})\;<\;
F_{\beta}(\bs{m}_{2}^{\beta}(r_{\text{e}}),r_{\text{e}})\;,\\
F_{\beta}(\bs{\sigma}_{0}^{\beta}
(r_{\text{e}}),r_{\text{e}})\;=\;
F_{\beta}(\bs{\sigma}_{1}^{\beta}(r_{\text{e}}),r_{\text{e}})
\;>\;F_{\beta}(\bs{\sigma}_{2}^{\beta}(r_{\text{e}}),r_{\text{e}})\;,
\end{gather*}
for sufficiently small $r$. Hence, there are two different metastable behaviors associated to two
different heights: $h^\beta_{01} = F_{\beta}(\bs{\sigma}_{0}^{\beta}
(r_{\text{e}}), r_{\text{e}})$ and $h^\beta_2 = F_{\beta}
(\bs{\sigma}_{2}^{\beta} (r_{\text{e}}),r_{\text{e}})$, with
$h^\beta_2 < h^\beta_{01}$.

The height $h^\beta_{01}$ defines two valleys. More precisely the set
$\{\bs x \in \Xi : F_{\beta}(\bs x , r_{\text{e}}) \le h^\beta_{01}\}$ can
be written as $\overline{V_2} \cup \overline{V_{01}}$, where the open
set $V_2$ contains the point $\bs{m}_{2}^{\beta}(r_{\text{e}})$, the
open set $V_{01}$ contains the points $\bs{m}_{0}^{\beta}
(r_{\text{e}})$, $\bs{m}_{1}^{\beta} (r_{\text{e}})$, and
$\overline{V_2} \cap \overline{V_{01}} =
\{\bs{\sigma}_{0}^{\beta}(r_{\text{e}}),
\bs{\sigma}_{1}^{\beta}(r_{\text{e}})\}$.

In a certain time scale, related to the difference
$F_{\beta}(\bs{\sigma}_{0}^{\beta} (r_{\text{e}}),r_{\text{e}}) -
F_{\beta}(\bs{m}_{2}^{\beta} (r_{\text{e}}),r_{\text{e}})$, starting
from a neighborhood of $\bs{m}_{2}^{\beta}(r_{\text{e}})$, after an
exponential time, the process jumps to one of the two stable sets. The
expectation of the transition time can be computed as in Case I.

The height $h^\beta_{2}$ defines also two valleys: the set $\{\bs x
\in \Xi : F_{\beta}(\bs x , r_{\text{e}}) \le h^\beta_2\}$ can be
written as $\overline{W_0} \cup \overline{W_{1}}$, where the open set
$W_0$ contains the point $\bs{m}_{0}^{\beta}(r_{\text{e}})$, the open
set $W_{1}$ contains the point $\bs{m}_{1}^{\beta} (r_{\text{e}})$,
and $\overline{W_0} \cap \overline{W_{1}} =
\{\bs{\sigma}_{2}^{\beta}(r_{\text{e}})\}.$

In a time scale related to the difference
$F_{\beta}(\bs{\sigma}_{2}^{\beta} (r_{\text{e}}),r_{\text{e}}) -
F_{\beta}(\bs{m}_{0}^{\beta} (r_{\text{e}}),r_{\text{e}})$, the
process jumps at exponential times from a neighborhood of
$\bs{m}_{0}^{\beta}(r_{\text{e}})$ to a neighborhood of
$\bs{m}_{1}^{\beta}(r_{\text{e}})$, and reciprocally. Here also the
expectation of the transition time can be computed.

\smallskip
\noindent\textbf{Case III: $\theta_{\text{e}}\neq k\pi/3$ for all
$k\in\mathbb{Z}$.} To fix ideas, suppose without loss of generality
that $0<\theta_{\text{e}}<\frac{\pi}{3}$.  By Lemma \ref{lem4}, 
\begin{gather*}
F_{\beta}(\bs{m}_{0}^{\beta}(r_{\text{e}}),r_{\text{e}})
\;<\;F_{\beta}(\bs{m}_{1}^{\beta}(r_{\text{e}}),r_{\text{e}})
\;<\;F_{\beta}(\bs{m}_{2}^{\beta}(r_{\text{e}}),r_{\text{e}})\;,\\
F_{\beta}(\bs{\sigma}_{0}^{\beta}(r_{\text{e}}),
r_{\text{e}})\;>\;F_{\beta}(\bs{\sigma}_{1}^{\beta}(r_{\text{e}}),
r_{\text{e}})\;>\;F_{\beta}(\bs{\sigma}_{2}^{\beta}(r_{\text{e}}),r_{\text{e}})\;.
\end{gather*}
As in Case II, there are two time scales, which might be of the same
order or even equal. In a time scale associated to the difference
$F_{\beta}(\bs{\sigma}_{1}^{\beta}(r_{\text{e}}), r_{\text{e}}) -
F_{\beta}(\bs{m}_{2}^{\beta}(r_{\text{e}}),r_{\text{e}})$, starting
from a neighborhood of $\bs{m}_{2}^{\beta}(r_{\text{e}})$, after an
exponential time, the process jumps to a neighborhood of
$\bs{m}_{0}^{\beta}(r_{\text{e}})$ and there remains for ever.

Similarly, in a time scale associated to the difference
$F_{\beta}(\bs{\sigma}_{2}^{\beta}(r_{\text{e}}), r_{\text{e}}) -
F_{\beta}(\bs{m}_{1}^{\beta}(r_{\text{e}}),r_{\text{e}})$, starting
from a neighborhood of $\bs{m}_{1}^{\beta}(r_{\text{e}})$, after an
exponential time, the process jumps to a neighborhood of
$\bs{m}_{0}^{\beta}(r_{\text{e}})$ and there remains for ever, the
neighborhood of $\bs{m}_{0}^{\beta}(r_{\text{e}})$ being a stable set.

\begin{figure}
  \protect
\includegraphics[scale=0.105]{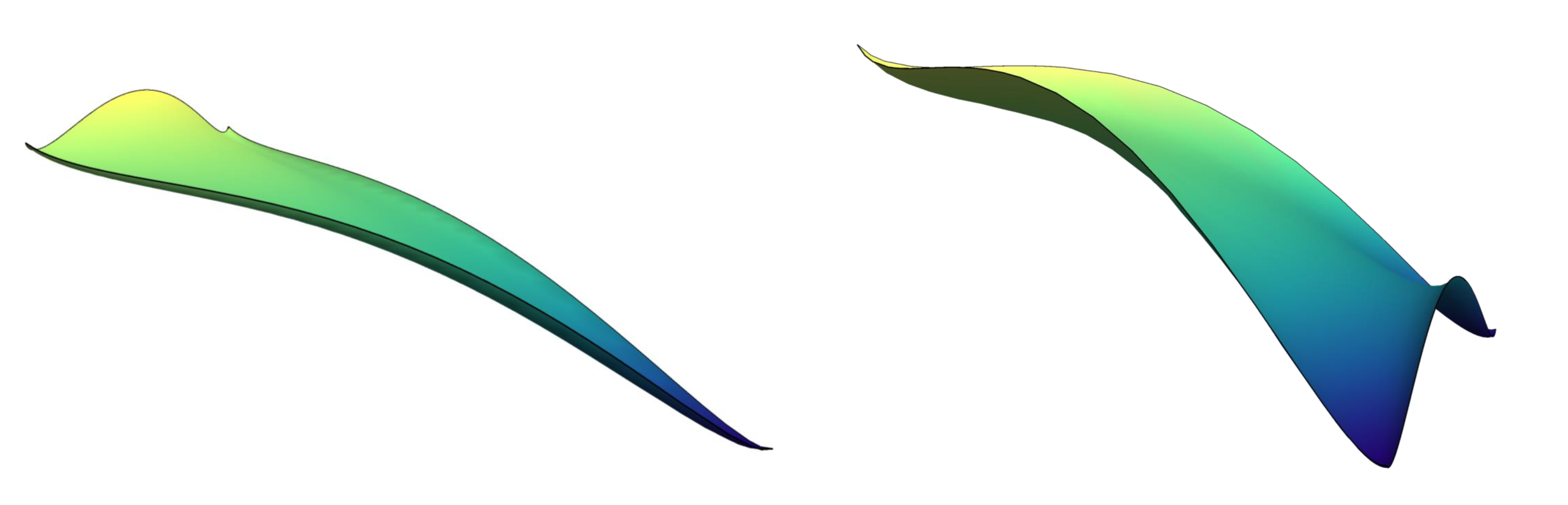}\protect
\caption{\label{fig7}The graphs of $F_\beta (\bs{x},\,r_\text{e})$ for
  $(\beta,\,r_\text{e},\,\theta_\text{e})=(2.4,0.8,0)$ (left) and
  $(\beta,r_\text{e},\,\theta_\text{e})=(2.4,0.8,\pi/3)$ (right).}
\end{figure}


\subsection{General external field with $\theta_\text{e}=(2k+1)\pi/3$}
\label{sec52}

We examine in this section the metastable behavior of the Potts model
with inverse temperature $\beta>2$ and external field equal to
$\theta_\text{e}=(2k+1)\pi/3$ for some $k=0,\,1,\,2$.  We prove that there are
three different metastable regimes depending on the magnitude of the
external field $r_{\text{e}}$. 

More precisely, fix $\beta>2$, and assume without loss of generality
that $\theta_{\text{e}}=\pi$, i.e., $k=1$. We prove below that there are two
critical values $0<r_{1}^{\beta}<r_{2}^{\beta}<1$, such that:
\begin{enumerate}
\item [(I)] For $r\in(0,\,r_{1}^{\beta})$, we observe the phenomenon
  already described in Section 5.1, and derived from a perturbative
  method. There are three local minima
  $\boldsymbol{m}_{i}^{\beta}(r)$, $0\le i\le2$, where
  $\boldsymbol{m}_{1}^{\beta}(r)$, $\boldsymbol{m}_{2}^{\beta}(r)$ are
  global minima, and there are three saddle points
  $\boldsymbol{\sigma}_{i}^{\beta}(r)$, $0\le i\le2$. The critical
  point $\boldsymbol{\sigma}_{i}^{\beta}(r)$, $i=1,\,2$, connects the
  metastable valley which contains $\boldsymbol{m}_{0}^{\beta}(r)$ to
  the valley which contains both of $\boldsymbol{m}_{1}^{\beta}(r)$
  and $\boldsymbol{m}_{2}^{\beta}(r)$. The saddle point
  $\boldsymbol{\sigma}_{0}^{\beta}(r)$ connects the stable sets
  associated $\boldsymbol{m}_{1}^{\beta}(r)$ and
  $\boldsymbol{m}_{2}^{\beta}(r)$. In addition there are one
  additional local maxima $\boldsymbol{p}^{\beta}(r)$.

\item [(II)] For $r\in(r_{1}^{\beta},\,r_{2}^{\beta})$, we still have
  three local minima $\boldsymbol{m}_{i}^{\beta}(r)$, $0\le i\le2$,
  but there are only two saddle points
  $\boldsymbol{\sigma}_{0}^{\beta}(r)$ and $\boldsymbol{p}^{\beta}(r)$
  such that $F_{\beta} (\boldsymbol{\sigma}_{0}^{\beta}
  (r),r)<F_{\beta} (\boldsymbol{p}^{\beta} (r),r)$.  The set
  $\big\{\boldsymbol{x}:F_{\beta}(\boldsymbol{x},r) <
  F_{\beta}(\boldsymbol{p}^{\beta} (r),r)\big\}$ has two connected
  components, one of which contains the metastable local minima
  $\boldsymbol{m}_{0}^{\beta}(r)$, while the other one contains the two
  global minima $\boldsymbol{m}_{1}^{\beta}(r)$,
  $\boldsymbol{m}_{2}^{\beta}(r)$.  The set
  $\big\{\boldsymbol{x}:F_{\beta}(\boldsymbol{x},r) <
  F_{\beta}(\boldsymbol{\sigma}_{0}^{\beta}(r), r)\big\}$ consists of
  two connected components, each one containing one of the points
  $\boldsymbol{m}_{1}^{\beta}(r)$,
  $\boldsymbol{m}_{2}^{\beta}(r)$. This regime is illustrated by
  Figure \ref{fig8}.
  
  \begin{figure}
  \protect
\includegraphics[scale=0.085]{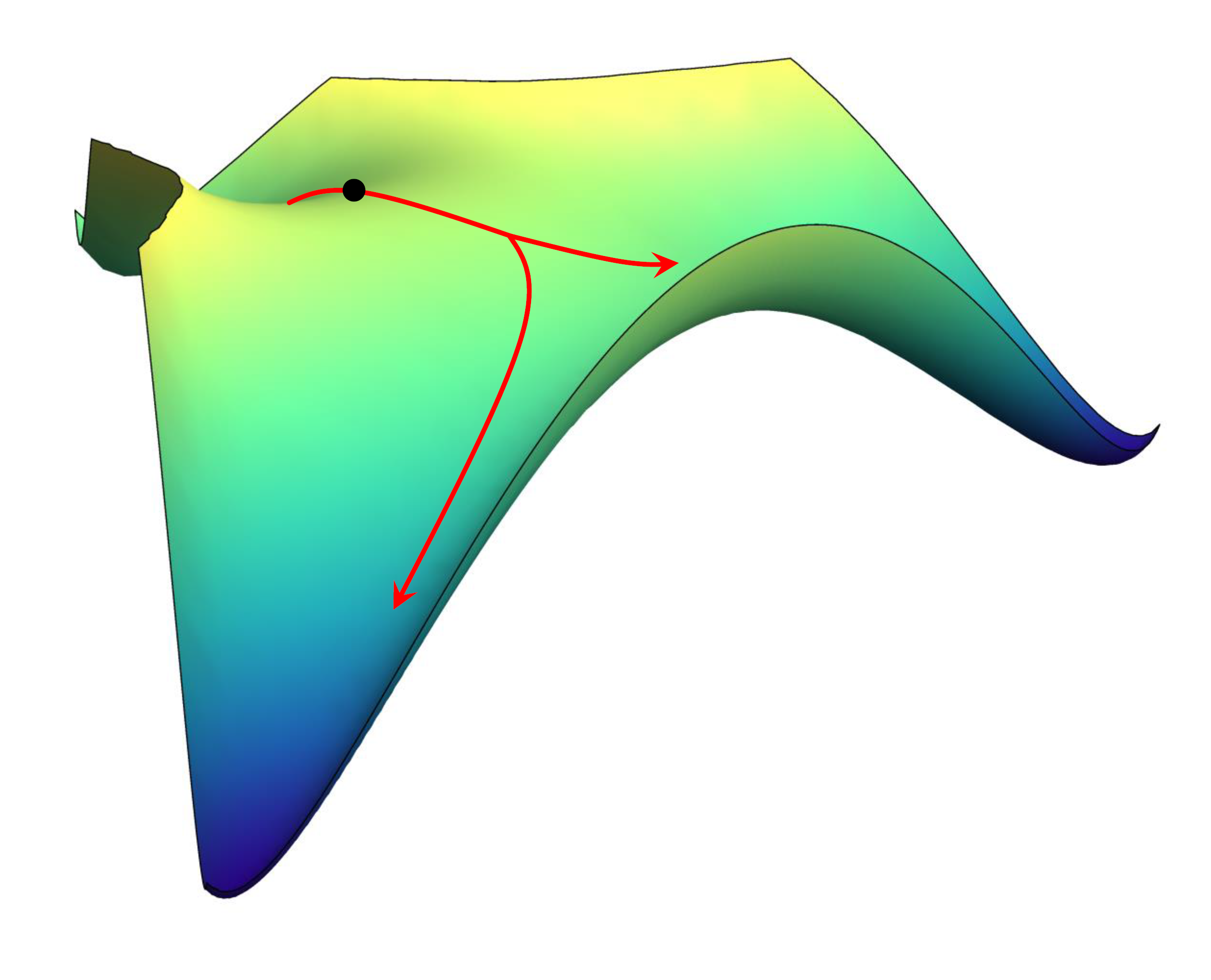}\protect
\caption{\label{fig8}The graph of $F_\beta (\bs{x},r_\text{e})$ for
  $(\beta,\,r_\text{e},\,\theta_\text{e})=(2.4,0.2,\pi/3)$.}
\end{figure}

\item [(III)] For $r\in(r_{2}^{\beta},\,\infty)$, there are three
  critical points, $\boldsymbol{m}_{1}^{\beta}(r)$,
  $\boldsymbol{m}_{2}^{\beta}(r)$ and
  $\boldsymbol{\sigma}_{0}^{\beta}(r)$. The points
  $\boldsymbol{m}_{1}^{\beta}(r)$ and $\boldsymbol{m}_{2}^{\beta}(r)$
  are global minima and the stable sets around them are connected via
  the saddle point $\boldsymbol{\sigma}_{0}^{\beta}(r)$.  This is the
  usual tunneling situation. This regime is illustrated by the right
  picture in Figure \ref{fig7}.
\end{enumerate}

\begin{remark}
  At $r=r_{1}^{\beta}$, the structure is essentially similar to case
  (II), but the critical point $\boldsymbol{p}^{\beta}(r)$ is
  degenerate, and the approach does not apply. On the other hand, at
  $r=r_2^\beta$, the situation and the result are analogous the the
  ones of case (III).
\end{remark}

\begin{figure}
\label{fig00}
\protect
\includegraphics[scale=0.22]{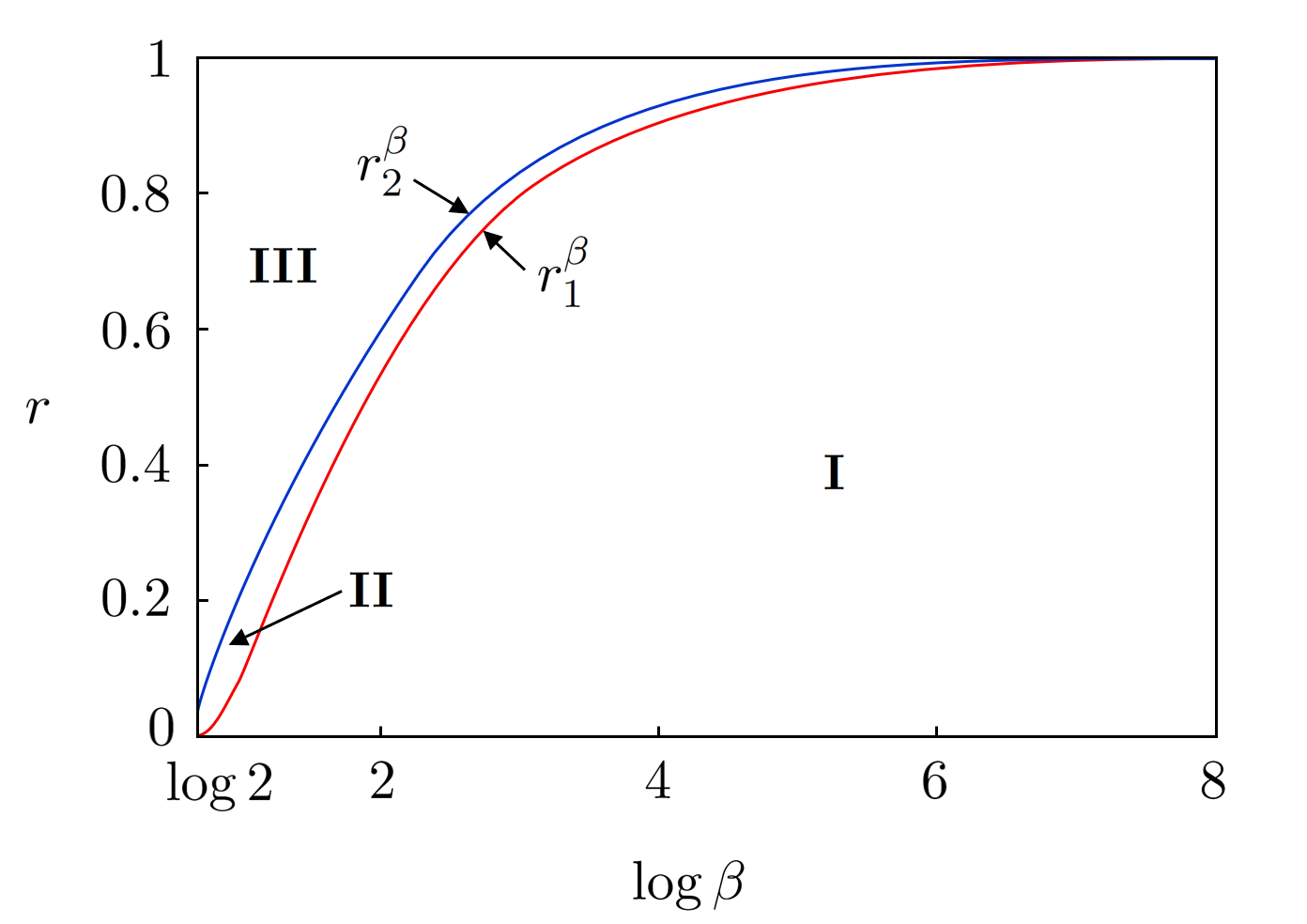}\protect
\caption{$(\beta, r)$-phase diagram at $\theta = \pi$. The regimes (I),
  (II) and (III) are indicated in the diagram.} 
\end{figure}

The critical values $r_1^\beta$ and $r_2^\beta$ have closed-form expressions given by \eqref{r1} and \eqref{r2}, respectively. 

We first characterize the critical points of $F_\beta(\cdot,r)$ for
all $r>0$. By \eqref{critical}, the critical point $(x_{1},x_{2})$
must satisfy

\begin{equation}
\label{crit3}
\frac{1}{\beta}\log x_{0}-\frac{3}{2}x_{0}+r
\;=\;\frac{1}{\beta}\log x_{1}-\frac{3}{2}x_{1}-\frac{1}{2}r
\;=\;\frac{1}{\beta}\log x_{2}-\frac{3}{2}x_{2}-\frac{1}{2}r\;.
\end{equation}

\smallskip
\noindent{\bf A. Critical points on $\{\bs{x}:x_1=x_2\}$.}
Inspired by the second equality of \eqref{crit3}, we first consider
critical points on the line $\left\{
  \boldsymbol{x}:x_{1}=x_{2}\right\}$. Denote points on this line by
$(t,t)$, $0<t<1/2$, so that $x_{0}=1-2t$. The point $(t,t)$ satisfies
\eqref{crit3} if and only if
\begin{equation*}
\frac{1}{\beta}\log(1-2t) \;-\; \frac{3}{2}(1-2t) \;+\; r\;=\;
\frac{1}{\beta}\log t \;-\; \frac{3}{2}t \;-\;
\frac{1}{2}r\;,
\end{equation*}
or equivalently $f_{r}(t)=\beta$ where 
\begin{equation*}
f_{r}(t)\;=\;\frac{2}{3(1-r-3t)}\log\frac{1-2t}{t}\;.
\end{equation*}
For $r=0$, the function $f_{0}(t)$, examined in Section 4, does not
have a singularity at $t=1/3$. In contrast, the function $f_{r}(t)$,
$0<r<1$, has a singularity at $t=(1-r)/3$. Let $k_{r}=(1-r)/3$ and
regard $f_{r}$ as a function on $(0,\,k_{r})\cup(k_{r},\,1/2)$. Next
lemma presents the elementary properties of the function $f_{r}$,
whose graph is illustrated in Figure \ref{fig9}.

\begin{lemma}
\label{alem1}
Consider the function $h:(0,\,1/2) \to \bb R$ defined by
\begin{equation*}
h(t)\;=\;-3t(1-2t)\log\frac{1-2t}{t}-3t+1\;.
\end{equation*}
Then, 
\begin{enumerate}
\item For $0<r<1$ and $t\in(0,\,k_{r})\cup(k_{r},\,1/2)$, $f_{r}'(t)$
  and $r-h(t)$ have the same sign. In particular, $f_{r}'(t)=0$ if and
  only if $h(t)=r$.

\item For any $0<r<1$, the equation $f_{r}'(t)=0$ has unique solution
  $m_{0}(r)\in(0,\,k_{r})$. The function $f_{r}(\cdot)$ is decreasing
  on $(0,\,m_{0}(r))$, and is increasing on
  $(m_{0}(r),\,k_{r})\cup(k_{r},\,1/2)$.  Furthermore,
\begin{equation*}
\lim_{t\downarrow0}f_{r}(t)=\lim_{t\uparrow k_{r}}f_{r}(t)
\;=\; \lim_{t\uparrow1/2}f_{r}(t)=\infty\;\;
\mbox{and\;\;}\lim_{t\downarrow k_{r}}f_{r}(t)=-\infty\;.
\end{equation*}

\item For all $r\ge1$, the function $f_{r}(\cdot)$ is increasing on
  $(0,\,1/2)$, and $\lim_{x\downarrow0}f_{r}(x)=-\infty$,
  $\lim_{x\uparrow1/2}f_{r}(x)=\infty$.

\item On $(0,\,1)$, the map $r\mapsto m_{0}(r)$ is decreasing and
  $\lim_{r\uparrow1}m_{0}(r)=0$

\item On $(0,\,1)$, the map $r\mapsto f_{r}(m_{0}(r))$ is increasing
  and $\lim_{r\uparrow1}f_{r}(m_{0}(r))=\infty$.
\end{enumerate}
\end{lemma}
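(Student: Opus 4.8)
The plan is to reduce every assertion to a single sign computation. Writing $g(t)=\log\frac{1-2t}{t}$, so that $f_r(t)=\frac23\,\frac{g(t)}{1-r-3t}$ and $g'(t)=-1/[t(1-2t)]$, I would first differentiate by the quotient rule,
\[
f_r'(t)\;=\;\frac23\,\frac{g'(t)(1-r-3t)+3g(t)}{(1-r-3t)^2}\;,
\]
valid on $(0,k_r)\cup(k_r,1/2)$. Multiplying the numerator by $t(1-2t)>0$ and using $t(1-2t)g'(t)=-1$ collapses it to $3t(1-2t)g(t)-(1-r-3t)$, which is exactly $r-h(t)$. Hence
\[
f_r'(t)\;=\;\frac{2}{3\,t(1-2t)(1-r-3t)^2}\,\big(r-h(t)\big)\;,
\]
and since the prefactor is strictly positive, $f_r'(t)$ and $r-h(t)$ have the same sign. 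This is assertion (1), and the identity is purely algebraic, so it holds for every $r$; I will reuse it for (3).

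Next I would fix the shape of $h$. A direct computation gives $h'(t)=3(4t-1)\,g(t)$, the $g'$-term cancelling once more. The sign of $g$ flips at $t=1/3$ and that of $4t-1$ at $t=1/4$, so $h$ is strictly decreasing on $(0,1/4)$, strictly increasing on $(1/4,1/3)$, and strictly decreasing on $(1/3,1/2)$; together with $h(0^+)=1$, $h(1/3)=0$ and $h(1/2^-)=-1/2$, this yields $h<1$ throughout and $h\le 0$ on $[1/4,1/2)$. Consequently, for $r\in(0,1)$ the equation $h(t)=r$ has exactly one root, lying in $(0,1/4)$; this is $m_0(r)$, and by (1) it is the unique zero of $f_r'$, with $f_r'<0$ to its left and $f_r'>0$ to its right. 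To place $m_0(r)$ on the correct side of the singularity I would evaluate $h$ at $k_r=(1-r)/3$: using $1-2k_r=(1+2r)/3$ one gets $h(k_r)-r=-\frac{(1-r)(1+2r)}{3}\log\frac{1+2r}{1-r}<0$, so $h(k_r)<r=h(m_0(r))$; since $h$ decreases on $(0,1/4)$ and $m_0(r)\in(0,1/4)$, this forces $m_0(r)<k_r$. The four boundary limits in (2) follow by reading off the signs of the numerator $g$ and denominator $1-r-3t$ on each side of $0$, $k_r$, $1/2$. For (3), when $r\ge 1$ we have $k_r\le 0$, so there is no singularity in $(0,1/2)$; the sign identity gives $f_r'$ the sign of $r-h(t)$, positive because $h(t)<1\le r$, so $f_r$ is increasing, and the two limits come again from inspecting numerator and the (now everywhere negative) denominator.

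For (4), $m_0(r)$ is $h^{-1}(r)$ along the decreasing branch $h|_{(0,1/4)}$, so $r\mapsto m_0(r)$ is decreasing (equivalently $m_0'(r)=1/h'(m_0(r))<0$ by implicit differentiation), and since $h(0^+)=1$ we get $m_0(r)\to 0$ as $r\uparrow 1$. For (5), the envelope principle removes the $t$-derivative at the critical point, leaving
\[
\frac{\textup{d}}{\textup{d}r}f_r(m_0(r))\;=\;\frac{2}{3}\,\frac{g(m_0(r))}{(1-r-3m_0(r))^2}\;,
\]
which is positive since $m_0(r)<1/3$ makes $g(m_0(r))>0$; hence the map increases. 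For the limit, as $r\uparrow 1$ we have $m_0(r)\downarrow 0$, so $g(m_0(r))\to+\infty$ while the denominator equals $3(k_r-m_0(r))>0$ and tends to $0$, giving $f_r(m_0(r))\to+\infty$. The main obstacle is the one genuinely non-local step, namely certifying $m_0(r)<k_r$ for \emph{every} $r\in(0,1)$ and not only for $r\le 1/4$ where it is immediate: this is precisely where the explicit sign of $h(k_r)-r$ is indispensable, while everything else is careful but routine bookkeeping of the signs of $g$, $4t-1$ and $1-r-3t$.
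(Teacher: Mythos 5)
Your proof is correct, and for parts (1)--(4) it follows essentially the paper's own argument: the same factorization $f_r'(t)=\frac{2}{3t(1-2t)(1-r-3t)^2}\,\bigl(r-h(t)\bigr)$, the same shape analysis of $h$ via $h'(t)=3(4t-1)\log\frac{1-2t}{t}$ together with $h(0^+)=1$ and $h(1/3)=0$, and the same certification $h(k_r)<r$ --- where you usefully make explicit the computation $h(k_r)-r=-\frac{(1-r)(1+2r)}{3}\log\frac{1+2r}{1-r}<0$ that the paper dismisses as ``an elementary calculation.'' (One cosmetic remark: your phrase ``since $h$ decreases on $(0,1/4)$'' implicitly assumes $k_r<1/4$; when $k_r\ge 1/4$ the bound $m_0(r)<1/4\le k_r$ is trivial, or one can simply say $h(t)>r$ exactly on $(0,m_0(r))$, so $h(k_r)<r$ forces $k_r>m_0(r)$ in all cases.) The only genuine divergence is in part (5): the paper rewrites $f_r(t)=\frac{2}{9}\,\frac{1-h(t)-3t}{(1-r-3t)\,t(1-2t)}$ and substitutes $h(m_0(r))=r$ to obtain the closed form $f_r(m_0(r))=\frac{2}{9m_0(r)(1-2m_0(r))}$, from which monotonicity in $r$ follows from part (4) and which is moreover reused later to derive the explicit expression \eqref{r2} for $r_2^\beta$; you instead apply the envelope principle, obtaining $\frac{\textup{d}}{\textup{d}r}f_r(m_0(r))=\frac{2}{3}\log\bigl(\frac{1-2m_0(r)}{m_0(r)}\bigr)(1-r-3m_0(r))^{-2}>0$, and compute the limit $r\uparrow 1$ directly from numerator and denominator. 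Your route is equally valid --- the chain-rule cancellation is justified since $h'(m_0(r))\neq 0$ makes $m_0(\cdot)$ smooth by the implicit function theorem, as you note --- and it avoids the algebraic rewriting of $f_r$, at the cost of not producing the closed form that the paper exploits downstream.
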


\begin{figure}
\protect
\includegraphics[scale=0.247]{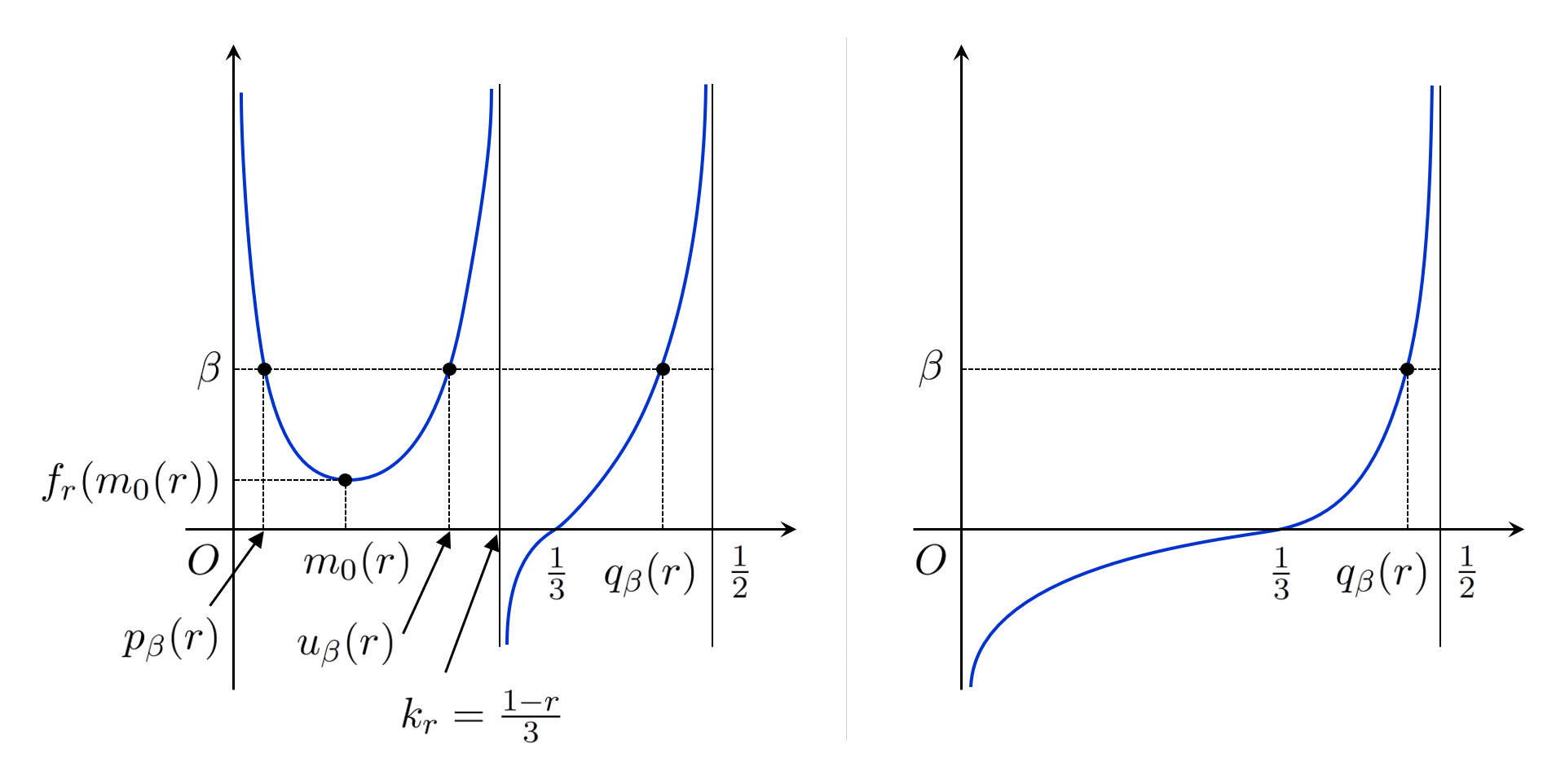}\protect
\caption{\label{fig9}The graph of $f_{r}(\cdot)$ for $r\in(0,1)$ (left)
and $r\in[1,\infty)$ (right)}
\end{figure}

\begin{proof}
It is easy to verify that 
\begin{equation*}
f_{r}'(t)\;=\;\frac{2}{3(1-r-3t)^{2}t(1-2t)}\left[r-h(t)\right]\;, 
\end{equation*}
so that (1) is obvious. 

We first investigate elementary properties of $h$. Since the derivative
of $h$ is given by $h'(t)=-3(1-4t)\log((1-2t)/t)$, the function
$h(t)$ is decreasing on $(0,\,1/4)\cup(1/3,\,1/2)$ and increasing
on $(1/4,\,1/3)$. We refer to Figure \ref{fig11} for the graph of
$h(t)$. Since $\lim_{t\downarrow0}h(t)=1$ and $h(1/3)=0$, we can
verify that the equation $h(t)=r$ has only one solution $m_0 (r)$ if $0<r<1$ and no solution if $r\ge1$. 

For (2), fix $0<r<1$, and then observe from the graph of $h$ that
$h(t)>r$ on $t\in(0,\,m_{0}(r))$ and $h(t)<r$ on $t\in(m_{0}(r),\,1/2)$.
We can check from an elementary calculation that $h(k_{r})<r$ and
hence $m_{0}(r)<k_{r}$. This completes the proof of the first part
of (2). The second part of (2) is direct from (1). The last part of
(2) follows easily from an elementary computation. 

For (3), since $h(t)<r$ for all $r\ge1$, the first part is obvious.
The remaining part is direct from the expression of $f_{r}(t)$. 

Assertion (4) is obvious from the fact that $h(m_{0}(r))=r$ for
$0<r<1$. To prove (5), note that $f_r(t)$ can be written as
\begin{equation*}
f_r(t) \;=\; \frac 29\, \frac {1-h(t)-3 t}{(1-r-3t) t (1-2t)}\;\cdot
\end{equation*}
Since $h(m_{0}(r))=r$, this equation becomes
\begin{equation}
\label{occ}
f_{r}(m_{0}(r))\;=\;\frac{2}{9m_{0}(r)(1-2m_{0}(r))}\;\cdot
\end{equation}
By the fact that $m_{0}(r)<m_{0}(0)=m_{0}<1/4$, where $m_{0}$ is
defined in Section 4, and that $m_{0}(r)$ is decreasing in $r$, we can
check that the right hand side of the previous displayed equation is
increasing in $r$. The second assertion of (5) follows from (3).
\end{proof}

\begin{figure}
 \protect
\includegraphics[scale=0.247]{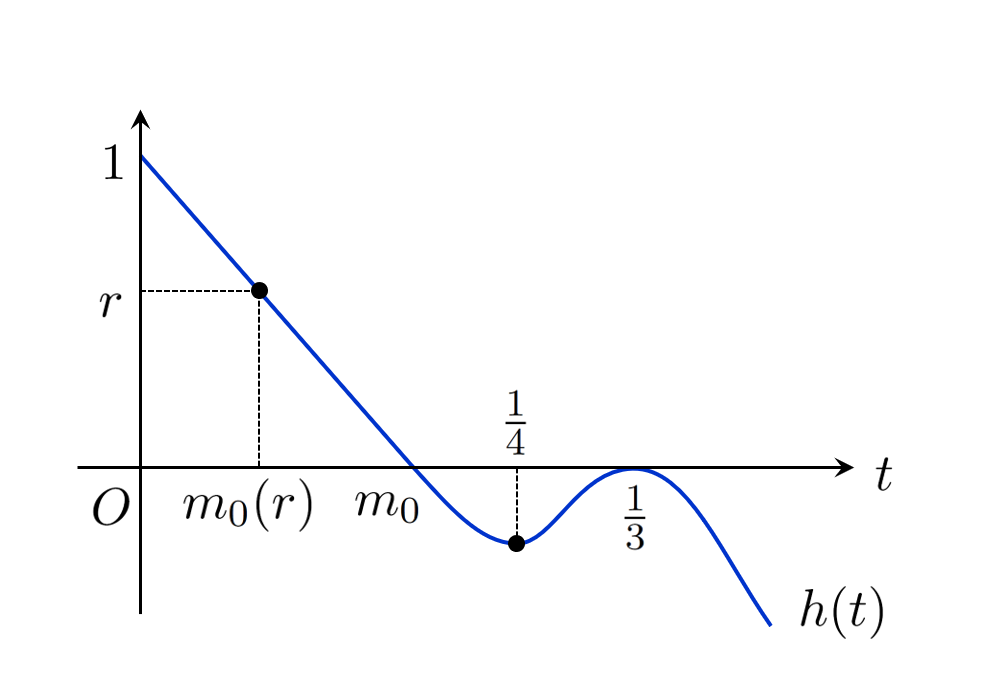}\protect
\caption{\label{fig11}The graph of $h(t)$.}
\end{figure}

Recall that $f_{0}(m_{0})=\beta_{3}<2$. Hence, by (5) of Lemma
\ref{alem1} and the intermediate value theorem, there exists unique $r\in (0,\,1)$
such that $f_{r}(m_{0}(r))=\beta$. Denote such $r$ by
$r_{2}^{\beta}$. By \eqref{occ} and the fact that $h(m_0(r))=r$, we can obtain the following formula for $r_{2}^{\beta}$:
\begin{equation}
\label{r2}
r_2^\beta\;=\;h\left(\,\frac{1}{4}-\sqrt{\frac{1}{16}-\frac{1}{9\beta}}\,\right)
\end{equation}
For $r\in(0,\,r_{2}^{\beta})$, the minimum of
$f_{r}(t)$ on $(0,\,k_{r})$, which is $f_{r}(m_{0}(r))$, is less than
$\beta$. Therefore, for such $r$, the equation $f_{r}(t)=\beta$ has
three solutions $p_{\beta}(r)<u_{\beta}(r)<q_{\beta}(r)$ on
$(0,\,1/2),$ where
\begin{equation*}
p_{\beta}\;\in\;(0,\,m_{0}(r))\;,\;\;u_{\beta}\;
\in\;(m_{0}(r),\,k_{r})\;,\;\;q_{\beta}(r)\;\in\;(1/3,\,1/2)\;.
\end{equation*}
We remark that $q_{\beta}(r)$ is larger than $1/3$ since
$f_{r}(1/3)=0$ for all $r>0$. On the other hand, for
$r\in(r_{2}^{\beta},\,\infty)$, by (3) of Lemma \ref{alem1}, there is
only one solution for $f_{r}(t)=\beta$ on $(1/3,\,1/2)$ and we denote
this again by $q_{\beta}(r)$. We refer to Figure \ref{fig9} for the
visualization.

In conclusion, there are three critical points
$\boldsymbol{m}_{0}^{\beta}(r)=(p_{\beta}(r),p_{\beta}(r))$,
$\boldsymbol{p}^{\beta}(r)=(u_{\beta}(r),u_{\beta}(r))$, and
$\boldsymbol{\sigma}_{0}^{\beta}(r)=(q_{\beta}(r),q_{\beta}(r))$ on
the line $\{\boldsymbol{x}:x_{1}=x_{2}\}$ for
$r\in(0,\,r_{2}^{\beta})$, while there is only one critical point
$\boldsymbol{\sigma}_{0}^{\beta}(r)=(q_{\beta}(r),q_{\beta}(r))$ for
$r>r_{2}^{\beta}$. Note that, for small $r$, this notation is in
accordance with the one defined in Section 5.1 for small $r$. 

In the next lemma we examine the properties of these critical
points. Let
\begin{equation}
\label{r1}
r_{1}^{\beta}\;=\; 1 \;-\; \frac{2}{\beta} \;-\;
\frac{2}{3\beta}\log\left(\frac{3\beta}{2}-2\right)\;.
\end{equation}
It is easy to check that $r_{1}^{\beta}<r_{2}^{\beta}$ for all
$\beta>2$ since $f_{r_{1}^{\beta}}(m_{0}(r_{1}^{\beta}))<\beta$. 

\begin{lemma}
\label{alem3}
We have that
\begin{enumerate}
\item The point $\boldsymbol{m}_{0}^{\beta}(r)$ is a local minimum of
  $F_{\beta}(\cdot,r)$ for all $r\in(0,\,r_{2}^{\beta})$;
\item The point $\boldsymbol{\sigma}_{0}^{\beta}(r)$ is a saddle point
  of $F_{\beta}(\cdot,r)$ for all $r>0$;
\item The point $\boldsymbol{p}^{\beta}(r)$ is a local maxima of
  $F_{\beta}(\cdot,r)$ for all $r\in(0,\,r_{1}^{\beta})$, and a saddle
  point of $F_{\beta}(\cdot,r)$ for all
  $r\in(r_{1}^{\beta},\,r_{2}^{\beta})$.
\end{enumerate}
\end{lemma}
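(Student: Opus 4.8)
The plan is to read off each critical point's type from the signs of the two eigenvalues of $\nabla^2 F_\beta(\cdot,r)$, which is still given by \eqref{hess} because the field term in \eqref{F_b} is linear in $\bs x$. All three points lie on the line $\{x_1=x_2=t\}$ (so $x_0=1-2t$), where the Hessian has equal diagonal entries $A$ and equal off-diagonal entries $B$; it is therefore diagonalized by the vectors $(1,1)$ (tangent to the line) and $(1,-1)$ (transverse), with eigenvalues $\lambda_\parallel=A+B$ and $\lambda_\perp=A-B=\frac{1}{\beta t}-\frac32$. Hence $\lambda_\perp>0\iff t<t^\ast$, where $t^\ast:=\frac{2}{3\beta}<\frac13$ since $\beta>2$. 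A critical point is a minimum, a maximum, or a saddle according as $(\lambda_\parallel,\lambda_\perp)$ is $(+,+)$, $(-,-)$, or of opposite signs.

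For $\lambda_\parallel$ I would first record the field analogue of \eqref{pq1}: along the line $\phi(t):=F_\beta((t,t),r)$ satisfies $\phi'(t)=\frac3\beta(1-r-3t)(\beta-f_r(t))$, obtained from \eqref{der1} specialized to $\theta_{\textup e}=\pi$. Since $\lambda_\parallel=\frac12\phi''$ and $f_r(t)=\beta$ at each critical point, differentiation gives $\lambda_\parallel=-\frac{3}{2\beta}(1-r-3t)f_r'(t)$ there, and its sign follows from Lemma \ref{alem1}: positive at $\bs m_0^\beta(r)$ (where $t=p_\beta(r)<m_0(r)$, so $f_r'<0$ and $1-r-3t>0$), negative at $\bs p^\beta(r)$ (where $t=u_\beta(r)\in(m_0(r),k_r)$, so $f_r'>0$), and positive at $\bs\sigma_0^\beta(r)$ (where $t=q_\beta(r)>1/3>k_r$, so $f_r'>0$ while $1-r-3t<0$). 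For $\bs\sigma_0^\beta(r)$ the transverse sign is immediate, $q_\beta(r)>1/3>t^\ast$ gives $\lambda_\perp<0$, so it is a saddle for every $r>0$ (including $r\ge r_2^\beta$, where $q_\beta(r)$ is the only root and the same computation applies); this proves (2).

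It remains to sign $\lambda_\perp$ at $\bs m_0^\beta(r)$ and $\bs p^\beta(r)$, i.e.\ to locate $t^\ast$ among the roots $p_\beta(r)<m_0(r)<u_\beta(r)$ of $f_r=\beta$ on $(0,k_r)$. The pivot is that $r_1^\beta$ is exactly the value of $r$ at which $t^\ast$ is itself a root: substituting $t=t^\ast$ into $f_r(t)=\beta$ and solving for $r$ returns \eqref{r1}. A short computation gives $r_1^\beta-h(t^\ast)=\frac{4}{3\beta}(1-\tfrac2\beta)\log(\tfrac{3\beta}{2}-2)>0$, so by Lemma \ref{alem1}(1) $f_{r_1^\beta}'(t^\ast)>0$; hence $t^\ast$ is the middle root, $t^\ast=u_\beta(r_1^\beta)>m_0(r_1^\beta)$. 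For fixed $t^\ast$ the map $r\mapsto f_r(t^\ast)$ is strictly increasing where defined (fixed positive numerator, decreasing denominator, for $r<1-\tfrac2\beta$, an interval containing $r_1^\beta$) and equals $\beta$ at $r_1^\beta$. Thus for $r<r_1^\beta$ one has $f_r(t^\ast)<\beta$, placing $t^\ast$ in the subinterval $(p_\beta(r),u_\beta(r))$ of $(0,k_r)$ on which $f_r<\beta$; so $p_\beta(r)<t^\ast<u_\beta(r)$, giving $\lambda_\perp>0$ at $\bs m_0$ (minimum) and $\lambda_\perp<0$ at $\bs p$ (maximum). For $r\ge r_1^\beta$, Lemma \ref{alem1}(4) gives $m_0(r)\le m_0(r_1^\beta)<t^\ast$; if $t^\ast<k_r$ then $f_r(t^\ast)\ge\beta$ forces $u_\beta(r)\le t^\ast$, while if $t^\ast\ge k_r$ then $u_\beta(r)<k_r\le t^\ast$ directly, and in both cases $p_\beta(r)<m_0(r)<t^\ast$. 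Therefore $\lambda_\perp>0$ at $\bs m_0$ throughout $(0,r_2^\beta)$, which proves (1), and $\lambda_\perp<0$ at $\bs p$ for $r>r_1^\beta$ (with $\lambda_\perp=0$, hence degeneracy, at $r_1^\beta$), which proves (3).

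The main obstacle is exactly this last localization. Whereas the tangential signs drop out directly from the monotonicity of $f_r$ already established in Lemma \ref{alem1}, controlling $\lambda_\perp$ means comparing the \emph{fixed} threshold $t^\ast$ against the $r$-dependent roots uniformly over $(0,r_2^\beta)$. The clean route is to identify $r_1^\beta$ from \eqref{r1} as the unique crossing value and then transport the inequalities across $r$ via the monotonicities of $r\mapsto f_r(t^\ast)$ and $r\mapsto m_0(r)$; the one calculation that cannot be avoided is the sign of $r_1^\beta-h(t^\ast)$, which certifies that $t^\ast$ coincides with the middle root $u_\beta$ rather than with $p_\beta$ precisely at $r=r_1^\beta$.
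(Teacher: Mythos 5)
Your argument is correct in substance and runs parallel to the paper's proof at the level of the key quantities: your two eigenvalues $\lambda_\perp=\frac{1}{\beta t}-\frac{3}{2}$ and $\lambda_\parallel=\frac{2}{\beta(1-2t)}+\frac{1}{\beta t}-\frac{9}{2}$ are exactly the two factors of the determinant in the paper's display \eqref{n000}, and both proofs pivot on the same identification $u_\beta(r_1^\beta)=2/(3\beta)$, certified the same way ($f_{r_1^\beta}(2/(3\beta))=\beta$ together with $h(2/(3\beta))<r_1^\beta$; your closed form $r_1^\beta-h(t^\ast)=\frac{4}{3\beta}\bigl(1-\frac{2}{\beta}\bigr)\log\bigl(\frac{3\beta}{2}-2\bigr)$ checks out). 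Where you genuinely diverge, you gain economy in three places. First, reading the factorization as an eigen-decomposition along and transverse to $\{x_1=x_2\}$ makes the minimum/maximum/saddle classification immediate and dispenses with the paper's separate trace verification. Second, the identity $\lambda_\parallel=-\frac{3}{2\beta}(1-r-3t)f_r'(t)$ at critical points (which is correct) settles all three tangential signs at once via Lemma \ref{alem1}; the paper instead does three separate $h$-comparisons (the second inequalities of \eqref{n001} and \eqref{n002}, and display \eqref{n003}), equivalent to yours through Lemma \ref{alem1}(1) but carried out piecemeal. Third, and most substantively, you transport the threshold comparison across $r$ via the elementary monotonicity of $r\mapsto f_r(t^\ast)$ at the \emph{fixed} point $t^\ast$, whereas the paper implicitly differentiates $f_r(u_\beta(r))=\beta$ to prove $u_\beta(\cdot)$ decreasing (using \eqref{n003} to sign the derivative) and so obtains \eqref{rcr}. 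Your route also delivers the first inequality of \eqref{n001} (that $\lambda_\perp>0$ at $\bs{m}_0^\beta(r)$ uniformly on $(0,r_2^\beta)$) for free from the root localization $p_\beta(r)<t^\ast$, bypassing entirely the paper's auxiliary function $k(t)=3t+t\log\frac{1-2t}{t}$ and the attendant computation at $m_0(r)$.

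One sign slip in your final summary must be fixed: you write that ``$\lambda_\perp<0$ at $\bs{p}$ for $r>r_1^\beta$,'' but your own derivation immediately before gives $u_\beta(r)<t^\ast$ for $r>r_1^\beta$ (strictly, by the strict monotonicity of $r\mapsto f_r(t^\ast)$), hence $\lambda_\perp=\frac{1}{\beta u_\beta(r)}-\frac{3}{2}>0$ there, in agreement with \eqref{rcr}. Taken literally, $\lambda_\perp<0$ combined with your $\lambda_\parallel<0$ would make $\bs{p}^\beta(r)$ a local maximum on $(r_1^\beta,\,r_2^\beta)$, contradicting assertion (3); the correct reading ($\lambda_\perp>0$, $\lambda_\parallel<0$, eigenvalues of opposite sign) yields the saddle. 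Since the inequality $u_\beta(r)\le t^\ast$ you actually proved is the right one, this is a typo rather than a gap, but as written the last sentence asserts the wrong classification and should be corrected.
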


\begin{proof}
Recall from \eqref{hess} that the determinant and the trace of the
Hessian of $F_{\beta}(\cdot,r)$ at the point $(t,\,t)$, $t\in(0,\,1/2)$,
are given by 
\begin{equation}
\label{n000}
\begin{aligned}
&( \det\, \nabla^{2}F_{\beta}) (t,t,r)\;=\;
\left(\frac{1}{\beta t}-\frac{3}{2}\right)
\left(\frac{2}{\beta(1-2t)}+\frac{1}{\beta t}-\frac{9}{2}\right)\;, \\
&\quad( \mbox{tr }\nabla^{2}F_{\beta}) (t,t,r)
\;=\;\frac{1}{\beta(1-2t)}+\frac{1}{\beta t}-3\;.
\end{aligned}
\end{equation}

To prove assertion (1), we claim that 
\begin{equation}
\label{n001}
\frac{1}{\beta p_{\beta}(r)}-\frac{3}{2}\;>\;0\;\;
\mbox{and}\;\;\frac{2}{\beta(1-2p_{\beta}(r))}
+\frac{1}{\beta p_{\beta}(r)}-\frac{9}{2}\;>\;0
\end{equation}
for all $r\in(0,\,r_{2}^{\beta}).$ For the first inequality,
substitute $\beta$ by $f_{r}(p_{\beta}(r))$ to rewrite the
inequality as
\begin{equation*}
\frac{1}{f_{r}(p_{\beta}(r))\, p_{\beta}(r)}-\frac{3}{2}\;>\;0\;.
\end{equation*}
By a straightforward computation, we can show that this inequality
is equivalent to $k(p_{\beta}(r))<1-r$ where 
\begin{equation*}
k(t)\;=\;3t \;+\; t\log\frac{1-2t}{t}\;\cdot
\end{equation*}
Since $k'(t)>0$ for $t\in(0,\,m_{0})$, we have that
$k(p_{\beta}(r))<k(m_{0}(r))$. As $m_{0}(r)$ satisfies
$f_{r}'(m_{0}(r))=0$, i.e.,
\begin{equation*}
\log\frac{1-2m_{0}(r)}{m_{0}(r)}\;=\;
\frac{1-r-3m_{0}(r)}{3m_{0}(r)(1-2m_{0}(r))}\;,
\end{equation*}
we obtain that 
\begin{equation*}
k(m_{0}(r))\;=\;3m_{0}(r)+\frac{1-r-3m_{0}(r)}{3(1-2m_{0}(r))}\;.
\end{equation*}
Since $m_{0}(r)<k_{r}=(1-r)/3$, the inequality $k(m_{0}(r))<1-r$ is
equivalent to $3(1-2m_{0}(r))>1$. Since $1-2m_{0}(r)>1-2m_{0}>1/3$, we
obtain $k(m_{0}(r))<1-r$, and thus $k(p_{\beta}(r))<1-r$. This proves
the first inequality of \eqref{n001}. For the second inequality, by
replacing $\beta$ by $f_{r}(p_{\beta}(r))$, we can reorganize the
inequality as $h(p_{\beta}(r))>r$. This is true by (1), (2) of Lemma
\ref{alem1} because $p_{\beta}(r)\in(0,\,m_{0}(r))$. This completes
the proof of \eqref{n001}.

By \eqref{n000} and \eqref{n001}, the Hessian of $F_{\beta}(\cdot,r)$
at $\boldsymbol{m}_{0}^{\beta}(r)$ is positive definite, which proves
assertion (1) of the Lemma.

We turn to (2). To prove that the Hessian of $F_{\beta}(\cdot,r)$ at
$\boldsymbol{\sigma}_{0}^{\beta} (r)$ is negative definite, it
suffices to show that
\begin{equation}
\label{n002}
\frac{1}{\beta q_{\beta}(r)}-\frac{3}{2}\;<\;0\;\;
\mbox{and}\;\;\frac{2}{\beta(1-2q_{\beta}(r))}
+\frac{1}{\beta q_{\beta}(r)}-\frac{9}{2}\;>\;0
\end{equation}
for all $r>0$. The first inequality is obvious since $\beta>2$ and
$q_{\beta}(r)>1/3$. For the second inequality, by the same type of
substitution performed in the proof of assertion (1), we can reduce
the inequality to $h(q_{\beta}(r))<r$, which follows from the proof of
Lemma \ref{alem1}, where we proved that $h(t)<0$ for $t>1/3$.

It remains to prove assertion (3). As in the first part of the proof,
we can show that
\begin{equation}
\label{n003}
\frac{2}{\beta(1-2u_{\beta}(r))} \;+\;
\frac{1}{\beta u_{\beta}(r)}-\frac{9}{2}\;<\;0
\end{equation}
for all $r\in(0,\,r_{2}^{\beta})$. 

We claim that $u_{\beta}(r)$ is decreasing in $r$. By differentiating
$f_{r}(u_{\beta}(r))=\beta$ in $r$, we obtain that
\begin{equation*}
\frac{2}{3(1-r-3u_{\beta}(r))^{2}}\left[(-1-3u_{\beta}'(r))
\log\frac{1-2u_{\beta}(r)}{u_{\beta}(r)}+\frac{u_{\beta}'(r)
(1-r-3u_{\beta}(r))}{u_{\beta}(r)(1-2u_{\beta}(r))}\right]\;=\;0\;. 
\end{equation*}
Since $f_{r}(u_{\beta}(r))=\beta$, replace
$\log\left[(1-2u_{\beta}(r))/u_{\beta}(r)\right]$ by
$\frac{3}{2}(1-r-3u_{\beta}(r))\beta$, and reorganize the
previous equality as
\begin{equation*}
u_{\beta}'(r)\left[\frac{1}{u_{\beta}(r)(1-2u_{\beta}(r))}
-\frac{9}{2}\beta\right]\;=\;\frac{3}{2}\beta\;.
\end{equation*}
By \eqref{n003}, the expression inside of the bracket is negative so
that $u_{\beta}(r)$ is decreasing in $r$.

By the definition \eqref{r1} of $r_1^\beta$ and a direct calculation, $f_{r_{1}^{\beta}}\big(2/(3\beta)\big)=\beta$.  Since
$2/(3\beta)<1/3$, $2/(3\beta)$ is either $p_{\beta}(r_{1}^{\beta})$ or
$u_{\beta}(r_{1}^{\beta})$. An elementary computation shows that
$h(2/(3\beta))<r_{1}^{\beta}$ so that
$2/(3\beta)=u_{\beta}(r_{1}^{\beta})$. Since $u_{\beta}(r)$ is
decreasing, 
\begin{equation}
\label{rcr}
\frac{1}{\beta u_{\beta}(r)}\;<\;\frac{3}{2}
\mbox{\;\;for\;\;}r\;\in\;(0,\,r_{1}^{\beta})\;\;
\mbox{and\;\;}\frac{1}{\beta u_{\beta}(r)}\;>\;
\frac{3}{2}\mbox{\;\;for\;\;}r\;\in\;(r_{1}^{\beta},\,r_{2}^{\beta})\;.
\end{equation}
At this point, by combining these results and \eqref{n003}, the proofs of the assertions (3) for
$r\in(0,\,r_{1}^{\beta})$ and $r\in(r_{1}^{\beta},\,r_{2}^{\beta})$
are essentially same to those of (1) and (2), respectively.
\end{proof}

\begin{remark}
The point $\boldsymbol{p}^{\beta}(r)$ is a degenerate critical point
of $F_{\beta}(\cdot,r)$ if $r=r_{1}^{\beta}$.
\end{remark}

\smallskip
\noindent{\bf B. Critical points not on the line $\{\bs{x}:x_1=x_2\}$.}
We now consider the critical points which are not on the line
$\{\bs{x}:x_1=x_2\}$. Let $G_{\beta}: (0,\,\infty) \to \bb R$ be the
function given by $G_{\beta}(x)= (1/\beta)\log x - (3x/2)$. With this
notation we can rewrite (\ref{crit3}) as
\begin{equation}
\label{crit4}
G_{\beta}(x_{0})+3r/2 \;=\; G_{\beta}(x_{1})=G_{\beta}(x_{2})\;.
\end{equation}
Let $l_{\beta}:=2/(3\beta)$, $g_{\beta}:=G_{\beta}(l_{\beta})$.  It is
easy to verify that $G_{\beta}$ is increasing on $(0,\,l_{\beta})$,
decreasing on $(l_{\beta},\,\infty)$, and that
$\lim_{x\downarrow0}G_{\beta}(x)=\lim_{x\uparrow\infty}G_{\beta}(x)=-\infty$.
Hence, for all $y\in(-\infty,\,g_{\beta})$, there are two solutions
$H_{\beta}(y)<l_{\beta}<K_{\beta}(y)$ of the equation
$G_{\beta}(x)=y$. Let
$H_{\beta}(g_{\beta})=K_{\beta}(g_{\beta})=l_{\beta}$, and note that
$H_{\beta}$ and $K_{\beta}$ are continuous increasing and decreasing
functions on $(-\infty,\,g_{\beta}]$, respectively. We refer to Figure
\ref{fig10}.

\begin{figure}
\protect
\includegraphics[scale=0.247]{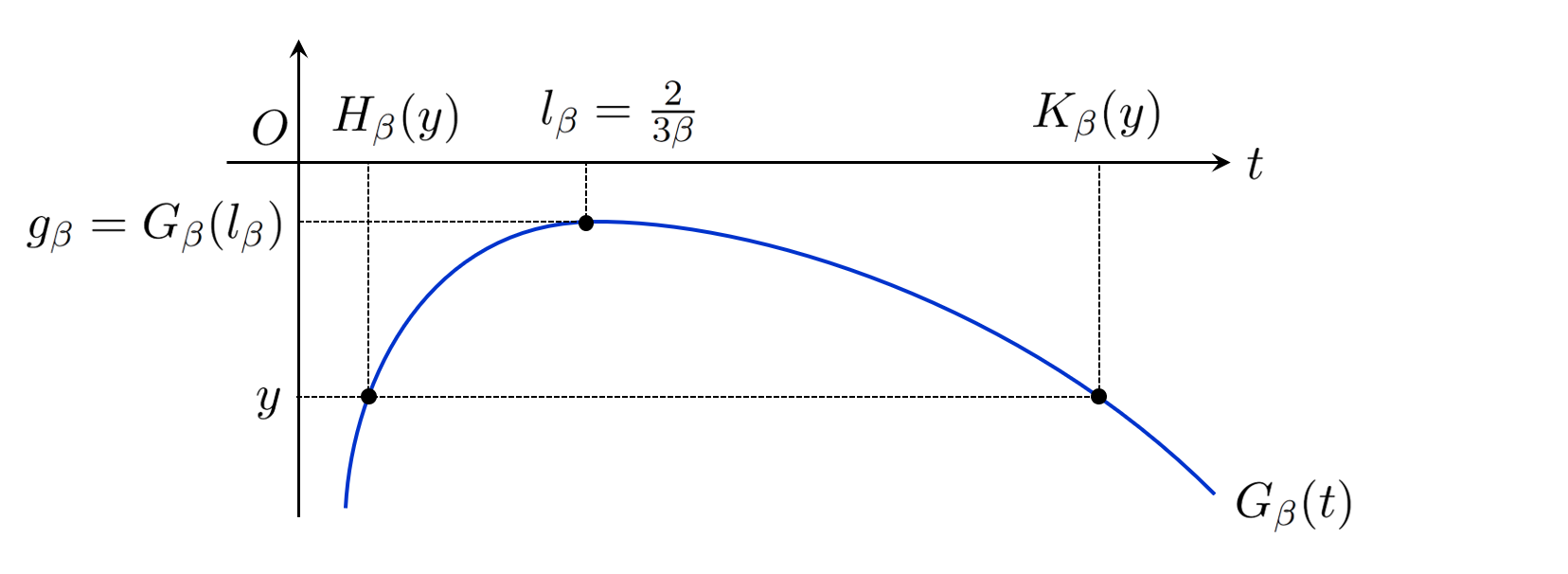}
\protect{
\caption{\label{fig10}
The graph of $G_{\beta}$ and the definitions of $H_{\beta}$,
$K_{\beta}$.}}
\end{figure}

\begin{lemma}
\label{alem4}
Let $S_{\beta}^{(k)} : (-\infty, g_\beta] \to \bb R$, $k=1,\,2$, be
the function given by $S_{\beta}^{(k)}(y) = kH_{\beta}(y) +
K_{\beta}(y)$. Then,
\begin{enumerate}
\item The function $S_{\beta}^{(1)}$ is decreasing on $(-\infty,\,g_{\beta}]$;
\item There exists $\tilde{g}_{\beta}<g_{\beta}$ such that $S_{\beta}^{(2)}$
is decreasing on $(-\infty,\,\tilde{g}_{\beta})$ and increasing on
$(\tilde{g}_{\beta},\,g_{\beta}]$.
\end{enumerate}
\end{lemma}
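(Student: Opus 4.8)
The plan is to differentiate $S_{\beta}^{(k)}$ in $y$ and reduce the sign of the derivative to an elementary one-variable inequality in the ratio $s := K_{\beta}(y)/H_{\beta}(y)$. Throughout write $a = H_{\beta}(y)$ and $b = K_{\beta}(y)$, so that $0 < a \le l_{\beta} \le b$ and $G_{\beta}(a) = G_{\beta}(b) = y$, with $a<l_\beta<b$ strictly when $y<g_\beta$. On $(-\infty,g_{\beta})$ both $a$ and $b$ avoid the critical point $l_{\beta}$ of $G_{\beta}$, so the inverse function theorem gives that $H_{\beta}$ and $K_{\beta}$ are differentiable with $H_{\beta}'(y) = 1/G_{\beta}'(a) > 0$ and $K_{\beta}'(y) = 1/G_{\beta}'(b) < 0$, where $G_{\beta}'(x) = 1/(\beta x) - 3/2$. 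Consequently
\[
\frac{d}{dy} S_{\beta}^{(k)}(y) \;=\; \frac{k}{G_{\beta}'(a)} + \frac{1}{G_{\beta}'(b)} \;=\; \frac{k\,G_{\beta}'(b) + G_{\beta}'(a)}{G_{\beta}'(a)\,G_{\beta}'(b)}\;.
\]
Since $G_{\beta}'(a) > 0 > G_{\beta}'(b)$, the denominator is negative, so the sign of the derivative is opposite to that of the numerator $N_k := k\,G_{\beta}'(b) + G_{\beta}'(a)$.

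Next I would exploit the defining constraint. The identity $G_{\beta}(a) = G_{\beta}(b)$ reads $(1/\beta)\log(b/a) = (3/2)(b-a)$, i.e.\ $l_{\beta} = (b-a)/(\log b - \log a)$ is the logarithmic mean of $a$ and $b$. Using $1/\beta = 3l_{\beta}/2$ and substituting this expression for $l_{\beta}$, a direct computation in the single variable $s = b/a \in (1,\infty)$ gives
\[
N_1 \;=\; \frac{3}{2s\log s}\,Q(s)\;, \qquad N_2 \;=\; \frac{3}{2s\log s}\,P(s)\;,
\]
where $Q(s) = s^2 - 1 - 2s\log s$ and $P(s) = s^2 + s - 2 - 3s\log s$. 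For $s > 1$ the prefactor $3/(2s\log s)$ is positive, so $N_1$ and $N_2$ have the signs of $Q$ and $P$, respectively. Finally, as $y$ increases from $-\infty$ to $g_{\beta}$, the quantities $a$ and $b$ move monotonically towards $l_{\beta}$, so $s$ is a continuous strictly decreasing bijection of $(-\infty,g_{\beta}]$ onto $[1,\infty)$; this orientation-reversing change of variable is what converts sign information in $s$ into monotonicity in $y$.

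For assertion (1) it suffices to show $Q(s) > 0$ for $s > 1$, which follows from $Q(1) = Q'(1) = 0$ and $Q''(s) = 2 - 2/s > 0$ on $(1,\infty)$; hence $N_1 > 0$, the derivative of $S_{\beta}^{(1)}$ is negative, and $S_{\beta}^{(1)}$ is decreasing. Assertion (2) is the crux and requires locating the sign change of $P$. Here $P(1) = P'(1) = 0$, while $P''(s) = 2 - 3/s$ vanishes only at $s = 3/2$, negative before and positive after. Thus $P'$ starts at $P'(1) = 0$, decreases to a negative minimum at $s = 3/2$, then increases to $+\infty$, so $P'$ has a single zero $s_1 > 3/2$ with $P' < 0$ on $(1,s_1)$ and $P' > 0$ on $(s_1,\infty)$. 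Consequently $P$ decreases from $P(1)=0$ to the negative value $P(s_1)$ and then increases to $+\infty$ (the $s^2$ term dominating $3s\log s$), producing a unique zero $s^{*} \in (s_1,\infty)$ with $P < 0$ on $(1,s^{*})$ and $P > 0$ on $(s^{*},\infty)$. Translating through $N_2$ and the decreasing map $y \mapsto s$: near $y=-\infty$ the ratio $s$ is large, $P>0$, $N_2>0$, so $dS_{\beta}^{(2)}/dy < 0$; near $y=g_{\beta}$ the ratio $s$ is near $1$, $P<0$, $N_2<0$, so $dS_{\beta}^{(2)}/dy > 0$. Defining $\tilde{g}_{\beta}$ as the unique $y$ with $b/a = s^{*}$ (so $\tilde{g}_{\beta}<g_{\beta}$ since $s^{*}>1$), we conclude that $S_{\beta}^{(2)}$ is decreasing on $(-\infty,\tilde{g}_{\beta})$ and increasing on $(\tilde{g}_{\beta},g_{\beta}]$.

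I expect the only delicate point to be the uniqueness of the turning point in (2): everything hinges on $P$ having exactly one zero for $s>1$, which is secured by the two-stage convexity analysis of $P$ driven by the single sign change of $P''$. The rest of the argument, namely the differentiation, the logarithmic-mean substitution, and the bookkeeping of the orientation-reversing reparametrization $y\mapsto s$, is routine but must be executed with care so that every sign is tracked correctly.
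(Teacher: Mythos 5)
Your proof is correct, and after the common opening move it takes a genuinely different route from the paper. Both arguments begin the same way: the inverse-function formulas $H_{\beta}'(y)=1/G_{\beta}'(H_{\beta}(y))>0$ and $K_{\beta}'(y)=1/G_{\beta}'(K_{\beta}(y))<0$ of \eqref{derhk} reduce everything to the sign of $kH_{\beta}'+K_{\beta}'$ on $(-\infty,g_{\beta})$. The paper then keeps $t=K_{\beta}(y)$ as the working variable and compares $G_{\beta}(t)$ with $G_{\beta}$ at the conjugate point $[(k+1)/l_{\beta}-k/t]^{-1}$, locating the unique sign change through a monotonicity analysis of the auxiliary functions $L_{\beta}^{(k)}$. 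You instead exploit the constraint $G_{\beta}(a)=G_{\beta}(b)$ in the form ``$l_{\beta}$ is the logarithmic mean of $a=H_{\beta}(y)$ and $b=K_{\beta}(y)$,'' which eliminates $\beta$ entirely and reduces the sign of the numerator to that of the universal functions $Q(s)=s^{2}-1-2s\log s$ and $P(s)=s^{2}+s-2-3s\log s$ in the ratio $s=b/a$; I checked your algebra (the factorizations $N_{k}=\tfrac{3}{2s\log s}[(s-1)(k+s)-(k+1)s\log s]$ are right), your convexity analyses ($Q''>0$ on $(1,\infty)$; $P''$ changes sign once at $s=3/2$ with $P'(3/2)=1-3\log(3/2)<0$, forcing a unique zero $s^{*}>3/2$ of $P$), and the orientation-reversing bijection $y\mapsto s$ from $(-\infty,g_{\beta}]$ onto $[1,\infty)$, and all signs are tracked correctly. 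Your route buys explicitness: the turning point is a $\beta$-independent critical ratio $s^{*}$, and the sign bookkeeping happens on two elementary polynomial-log functions. Notably, the paper's own bookkeeping slips at precisely the analogous point: as printed, $L_{\beta}^{(2)}$ is claimed increasing on $(l_{\beta},4l_{\beta}/3)$, decreasing thereafter, with limit $-\infty$, whereas a direct computation gives the reverse monotonicity and limit $+\infty$ --- the corrected version is what actually produces the lemma, since the printed version would make $S_{\beta}^{(2)}$ increasing near $y=-\infty$, contradicting $S_{\beta}^{(2)}(y)\to\infty$ there; your approach sidesteps this pitfall. What the paper's route buys is a uniform template that treats $k=1$ and $k=2$ (and would treat other weights) by the same conjugate-point device. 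The only cosmetic point in your write-up is the endpoint $y=g_{\beta}$, where $H_{\beta}$ and $K_{\beta}$ fail to be differentiable; monotonicity on the closed interval follows from continuity, as you implicitly use.
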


\begin{proof}
Since we can regard $H_{\beta}$ and $K_{\beta}$ as inverses of
$G_{\beta}$, their derivatives at $y<g_\beta$ can be written as
\begin{equation}
\label{derhk}
H_{\beta}'(y)\;=\;\beta\left[\frac{1}{H_{\beta}(y)}
\,-\, \frac{1}{l_{\beta}}\right]^{-1}\;\;
\mbox{and}\;\;\;
K_{\beta}'(y)\;=\;\beta\left[\frac{1}{K_{\beta}(y)}
\,-\,\frac{1}{l_{\beta}}\right]^{-1}\;.
\end{equation}
Since $H_{\beta}(y)<l_{\beta}<K_{\beta}(y)$ for $y<g_\beta$, the inequality
$H_{\beta}'(y)+K_{\beta}'(y)\le0$ is satisfied if
\begin{equation}
\label{HK inv}
\frac{2}{l_{\beta}}-\frac{1}{H_{\beta}(y)} \,-\,
\frac{1}{K_{\beta}(y)}\;\le\; 0\;\;
\text{or, equivalently,}
\;\left[\frac{2}{l_{\beta}}-\frac{1}{K_{\beta}(y)}\right]^{-1}
\;\ge\; H_{\beta}(y)\;.
\end{equation}
Since the left hand side of the last inequality is less than
$l_{\beta},$ since $G_{\beta}$ is increasing on $(0,l_{\beta})$, and
since $G_{\beta}(K_{\beta}(y))=G_{\beta}(H_{\beta}(y))$, this
inequality is equivalent to
$L_{\beta}^{(1)}\big(K_{\beta}(y)\big)\ge0$ where
\begin{equation*}
L_{\beta}^{(1)}(t)\;=\;G_{\beta}\left(\left[
\frac{2}{l_{\beta}}-\frac{1}{t}\right]^{-1}\right)
\,-\, G_{\beta}(t)\;,\;\;t\ge l_{\beta}\;.
\end{equation*}
It is easy to see that $L_{\beta}^{(1)}$ is increasing so that
$L_{\beta}^{(1)}(t)\ge L_{\beta}^{(1)}(l_{\beta})=0$. This proves
assertion (1) of the lemma.

The proof of assertion (2) is analogous. By the arguments presented
above, the sign of the function $2H_{\beta}'(y)+K_{\beta}'(y)$, $y\le
g_\beta$, is the opposite sign of
$L_{\beta}^{(2)}\big(K_{\beta}(y)\big)$, where
\begin{equation*}
L_{\beta}^{(2)}(t)\;=\;G_{\beta}\left(\left[\frac{3}{l_{\beta}}
\,-\, \frac{2}{t}\right]^{-1}\right)-G_{\beta}(t)\;,\;\;t
\;\ge\; l_{\beta}\;.
\end{equation*}
A direct computation shows that $L_{\beta}^{(2)}(t)$ is increasing on
$(l_{\beta},\,4l_{\beta}/3)$ and decreasing on
$(4l_{\beta}/3,\,\infty)$.  Since $L_{\beta}^{(2)}(l_{\beta})=0$ and
$\lim_{t\rightarrow\infty}L_{\beta}^{(2)}(t)=-\infty$, we can conclude
the proof of assertion (2).
\end{proof}

By \eqref{crit4}, a critical point $\boldsymbol{x}=(x_{1},x_{2})$ such
that $x_{1}\neq x_{2}$ satisfies $(x_{1},x_{2})=
\big(H_{\beta}(y),K_{\beta}(y)\big)$ or
$(x_{1},x_{2})=\big(K_{\beta}(y), H_{\beta}(y)\big)$ for some
$y<G_{\beta}(l_{\beta})$.  By symmetry, we only consider the first
case. There are two possibilities for $x_{0}$ in this situation,
namely, $x_{0}=K_{\beta}(y-3r/2)$ or $x_{0}=H_{\beta}(y-3r/2)$. We
start by considering the first case.

\begin{lemma}
\label{alem6}
The equation 
\begin{equation}
\label{6k1}
K_{\beta}(y-3r/2)+H_{\beta}(y)+K_{\beta}(y)\;=\;1
\end{equation}
has a unique solution $y_{1}^{\beta}(r)$ if $r\le r_{1}^{\beta}$ and has
no solution if $r>r_{1}^{\beta}$. Furthermore,
$\big(H_{\beta}(y_{1}^{\beta}(r)),\,K_{\beta}(y_{1}^{\beta}(r))\big)$
is a saddle point of $F_{\beta}(\cdot,r)$ if $r<r_{1}^{\beta}$.
\end{lemma}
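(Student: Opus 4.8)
The plan is to treat the left-hand side of \eqref{6k1} as a function of $y$ for fixed $r$ and to reduce both the counting of solutions and the saddle-point property to the monotonicity facts already proved in Lemma \ref{alem4}. Set
\begin{equation*}
\Phi_r(y)\;:=\;K_\beta(y-3r/2)+H_\beta(y)+K_\beta(y)\;=\;K_\beta(y-3r/2)+S_\beta^{(1)}(y)\;,\qquad y\in(-\infty,\,g_\beta]\;,
\end{equation*}
where $S_\beta^{(1)}$ is the function of Lemma \ref{alem4}. First I would show that $\Phi_r$ is strictly decreasing: the map $y\mapsto K_\beta(y-3r/2)$ is strictly decreasing because $K_\beta$ is, and $S_\beta^{(1)}$ is strictly decreasing by assertion (1) of Lemma \ref{alem4}. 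As $y\downarrow-\infty$ one has $H_\beta(y)\to 0$ and $K_\beta(y-3r/2),K_\beta(y)\to\infty$, so $\Phi_r(y)\to\infty$, while at the right endpoint, since $H_\beta(g_\beta)=K_\beta(g_\beta)=l_\beta$,
\begin{equation*}
\Phi_r(g_\beta)\;=\;K_\beta(g_\beta-3r/2)+2l_\beta\;.
\end{equation*}
A continuous strictly decreasing function running from $+\infty$ down to $\Phi_r(g_\beta)$ meets the level $1$ exactly once when $\Phi_r(g_\beta)\le 1$ and never when $\Phi_r(g_\beta)>1$; this is the mechanism that produces the unique root $y_1^\beta(r)$.

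It then remains to identify the threshold. Since $K_\beta$ is decreasing and $g_\beta-3r/2$ decreases in $r$, the value $\Phi_r(g_\beta)$ is strictly increasing in $r$, so there is a single critical value of $r$, which I would pin down by checking $\Phi_{r_1^\beta}(g_\beta)=1$, that is $K_\beta(g_\beta-3r_1^\beta/2)=1-2l_\beta=1-4/(3\beta)$. Using \eqref{r1} this amounts to the algebraic identity $G_\beta\big(1-4/(3\beta)\big)=g_\beta-3r_1^\beta/2$, which I expect to be the main computational step; after collecting the logarithmic terms it reduces to the elementary observation that $\big(2/(3\beta)\big)\big(3\beta/2-2\big)=1-4/(3\beta)$, and one verifies $1-4/(3\beta)>l_\beta$ (valid since $\beta>2$) to place this value on the decreasing $K_\beta$-branch. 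Monotonicity in $r$ then gives $\Phi_r(g_\beta)\le 1\Leftrightarrow r\le r_1^\beta$, yielding existence and uniqueness of $y_1^\beta(r)$ precisely for $r\le r_1^\beta$ and no solution for $r>r_1^\beta$.

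For the saddle-point claim I would exploit a clean rewriting of \eqref{dethess}: setting $b_i:=1/(\beta x_i)-3/2=G_\beta'(x_i)$, one has, with $a_i=1/(\beta x_i)$,
\begin{equation*}
\det(\nabla^2 F_\beta)(\bs{x})\;=\;\sum_{i<j}a_ia_j-3\sum_i a_i+\frac{27}{4}\;=\;\sum_{i<j}\big(a_i-\tfrac32\big)\big(a_j-\tfrac32\big)\;=\;b_0b_1+b_0b_2+b_1b_2
\end{equation*}
at any interior point. At the critical point $\bs{x}=(H_\beta(y),K_\beta(y))$ with $x_0=K_\beta(y-3r/2)$ and $y=y_1^\beta(r)$, differentiating the inverse relations exactly as in \eqref{derhk} gives $K_\beta'(y-3r/2)=1/b_0$, $H_\beta'(y)=1/b_1$, and $K_\beta'(y)=1/b_2$, whence
\begin{equation*}
\Phi_r'(y)\;=\;\frac{1}{b_0}+\frac{1}{b_1}+\frac{1}{b_2}\;=\;\frac{b_0b_1+b_0b_2+b_1b_2}{b_0b_1b_2}\;=\;\frac{\det(\nabla^2 F_\beta)(\bs{x})}{b_0b_1b_2}\;.
\end{equation*}

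Finally I would read off the sign. For $r<r_1^\beta$ the root satisfies $y_1^\beta(r)<g_\beta$, so $x_1=H_\beta(y)<l_\beta$ while $x_0,x_2>l_\beta$; since $G_\beta$ is increasing on $(0,l_\beta)$ and decreasing on $(l_\beta,\infty)$ this gives $b_1>0$ and $b_0,b_2<0$, hence $b_0b_1b_2>0$. As $\Phi_r$ is strictly decreasing we have $\Phi_r'(y_1^\beta(r))<0$, so the displayed identity forces $\det(\nabla^2 F_\beta)(\bs{x})<0$; the Hessian is therefore indefinite and the critical point is a saddle. The only delicate point here is the use of \emph{strict} monotonicity of $\Phi_r$ at the root, which holds because both $K_\beta'<0$ and the derivative of $S_\beta^{(1)}$ are strictly negative for $y<g_\beta$ in Lemma \ref{alem4}. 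This also explains why $r=r_1^\beta$ is excluded from the saddle statement: there $y_1^\beta(r)=g_\beta$, the point returns to the diagonal with $b_0b_1b_2$ and $\Phi_r'$ both degenerating, and the critical point coincides with the degenerate $\bs{p}^\beta(r_1^\beta)$.
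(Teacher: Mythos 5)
Your proof is correct and follows essentially the same route as the paper's: strict monotonicity of the left-hand side $R_{\beta}^{(1)}(y)=K_{\beta}(y-3r/2)+S_{\beta}^{(1)}(y)$ via assertion (1) of Lemma \ref{alem4} and the decrease of $K_\beta$, identification of the threshold by evaluating at $y=g_\beta$ and matching with \eqref{r1}, and then the factorization of $\det\big[(\nabla^{2}F_{\beta})(H_{\beta}(y),K_{\beta}(y),r)\big]$ combined with $1/a_0+1/a_1+1/a_2=\textup{d}R_{\beta}^{(1)}/\textup{d}y<0$ and the signs $a_1>0$, $a_0,a_2<0$ to conclude the determinant is negative at the root. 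Incidentally, your orientation of the existence criterion is the right one ($\Phi_r(g_\beta)\le 1$ iff $r\le r_1^\beta$, since $\Phi_r(g_\beta)$ increases in $r$ from the value $2/\beta<1$ at $r=0$); the paper's proof states the reversed condition $R_{\beta}^{(1)}(g_{\beta})\ge 1$ for existence, which is an evident typo that your computation silently corrects.
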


\begin{proof}
Denote by $R_{\beta}^{(1)}(y)$ the left hand side of \eqref{6k1}.  By
(1) of Lemma \ref{alem4}, and by the fact that $K_{\beta}$ is strictly
decreasing, the function $R_{\beta}^{(1)}$ is a continuous
strictly decreasing function on $(-\infty,\,g_{\beta})$. Since
$\lim_{y\rightarrow-\infty}R_{\beta}^{(1)}(y)=\infty$, the solution of
\eqref{6k1} does uniquely exist if $R_{\beta}^{(1)}(g_{\beta})\ge1$,
and does not exist if $R_{\beta}^{(1)}(g_{\beta})<1$. Since
$H_{\beta}(g_{\beta})=K_{\beta}(g_{\beta})=l_{\beta}$, by an
elementary computation we can check that
$R_{\beta}^{(1)}(g_{\beta})\ge1$ if $r\le r_{1}^{\beta}$, and
$R_{\beta}^{(1)}(g_{\beta})<1$ if $r>r_{1}^{\beta}$.  This proves the
first part of lemma.

For the second part, it suffices to show that
$(\nabla^{2}F_{\beta})(H_{\beta}(y),K_{\beta}(y),r)$, $y\le
g_{\beta}$, has a negative determinant for all $r\le r_{1}^{\beta}$.
Let 
\begin{equation*}
a_{0}(y)\;=\;\frac{1}{\beta K_{\beta}(y-3r/2)}-
\frac{3}{2}\;,\;\;a_{1}(y)\;=\;\frac{1}{\beta H_{\beta}(y)}-
\frac{3}{2}\;,\;\;a_{2}(y)\;=\;\frac{1}{\beta K_{\beta}(y)}-
\frac{3}{2}\;\cdot
\end{equation*}
Then, by \eqref{hess}, we can write 
\begin{equation*}
\det\big[(\nabla^{2}F_{\beta}) (H_{\beta}(y),
K_{\beta}(y),r)\big]\;=\;
a_{0}(y)a_{1}(y)a_{2}(y)\left[\frac{1}{a_{0}(y)}+
\frac{1}{a_{1}(y)}+\frac{1}{a_{2}(y)}\right]\;.
\end{equation*}
Observe that $a_{0}(y),\,a_{2}(y)<0$, $a_{1}(y)>0$ and, by
\eqref{derhk},
\begin{equation*}
\frac{1}{a_{0}(y)}+\frac{1}{a_{1}(y)}+\frac{1}{a_{2}(y)} \;=\;
\frac{dR_{\beta}^{(1)}}{dy}(y)\;<\;0\;.
\end{equation*}
These observations prove that the right hand side of the penultimate
displayed equation is negative, as claimed.
\end{proof}

We turn to the second case. 

\begin{lemma}
\label{alem5}
For all $r>0$, the equation
\begin{equation}
\label{6k2}
H_{\beta}(y-3r/2)+H_{\beta}(y)+K_{\beta}(y)\;=\;1
\end{equation}
has unique solution $y_{2}^{\beta}(r)$. Furthermore, the point
$\big(H_{\beta}(y_{2}^{\beta}(r)),\,K_{\beta}(y_{2}^{\beta}(r))\big)$
is a local minimum of $F_{\beta}(\cdot,r)$.
\end{lemma}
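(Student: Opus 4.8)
The plan is to reduce both assertions to a one-variable analysis of the function
\[
R_\beta^{(2)}(y)\;=\;H_\beta(y-3r/2)+H_\beta(y)+K_\beta(y)
\]
on $(-\infty,g_\beta]$, in complete parallel with the proof of Lemma \ref{alem6}. By \eqref{crit4} and the branch choice $x_0=H_\beta(y-3r/2)$ made here, the point $(H_\beta(y),K_\beta(y))$ is a critical point of $F_\beta(\cdot,r)$ of this second type exactly when $R_\beta^{(2)}(y)=1$. Introducing, as in Lemma \ref{alem6},
\[
a_0(y)=\frac{1}{\beta H_\beta(y-3r/2)}-\frac32,\quad a_1(y)=\frac{1}{\beta H_\beta(y)}-\frac32,\quad a_2(y)=\frac{1}{\beta K_\beta(y)}-\frac32,
\]
the derivative formulas \eqref{derhk} give $R_\beta^{(2)}{}'(y)=1/a_0+1/a_1+1/a_2$, and then \eqref{hess} yields
\[
\det\big[(\nabla^2F_\beta)(H_\beta(y),K_\beta(y),r)\big]\;=\;a_0a_1a_2\,\Big(\frac1{a_0}+\frac1{a_1}+\frac1{a_2}\Big)\;=\;a_0a_1a_2\,R_\beta^{(2)}{}'(y).
\]
Since $H_\beta(y-3r/2),H_\beta(y)<l_\beta<K_\beta(y)$, we have $a_0,a_1>0>a_2$, whence $a_0a_1a_2<0$. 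Thus $\det$ has the opposite sign to $R_\beta^{(2)}{}'$, and it suffices to locate the solution of $R_\beta^{(2)}=1$ in a region where $R_\beta^{(2)}$ is strictly decreasing.

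The decisive step is a comparison with $S_\beta^{(2)}=2H_\beta+K_\beta$ from Lemma \ref{alem4}. From \eqref{derhk} the function $H_\beta$ is increasing and convex, so for $r>0$ both $H_\beta(y-3r/2)<H_\beta(y)$ and $H_\beta'(y-3r/2)<H_\beta'(y)$; the first inequality gives $R_\beta^{(2)}(y)<S_\beta^{(2)}(y)$ and the second gives $R_\beta^{(2)}{}'(y)<S_\beta^{(2)}{}'(y)$ for all $y$. On $(-\infty,\tilde g_\beta]$, where $S_\beta^{(2)}$ is decreasing, this forces $R_\beta^{(2)}{}'<0$, so $R_\beta^{(2)}$ is strictly decreasing there; since $R_\beta^{(2)}(y)\to+\infty$ as $y\to-\infty$, it descends strictly from $+\infty$. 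On $[\tilde g_\beta,g_\beta]$, where $S_\beta^{(2)}$ is increasing, we obtain
\[
R_\beta^{(2)}(y)\;<\;S_\beta^{(2)}(y)\;\le\;S_\beta^{(2)}(g_\beta)\;=\;3l_\beta\;=\;\frac2\beta\;<\;1,
\]
the last inequality being exactly where the standing hypothesis $\beta>2$ enters. In particular $R_\beta^{(2)}<1$ on all of $[\tilde g_\beta,g_\beta]$.

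Putting the two regions together, $R_\beta^{(2)}$ decreases strictly from $+\infty$ to a value below $1$ on $(-\infty,\tilde g_\beta]$ and remains below $1$ on $[\tilde g_\beta,g_\beta]$. Hence $R_\beta^{(2)}=1$ has a unique solution $y_2^\beta(r)$, and it lies in $(-\infty,\tilde g_\beta)$. There $R_\beta^{(2)}{}'(y_2^\beta(r))<0$, so the determinant identity gives $\det[(\nabla^2F_\beta)(\,\cdot\,)]>0$; moreover the $(1,1)$-entry of the Hessian equals $a_0+a_1>0$, so by Sylvester's criterion the Hessian is positive definite and $(H_\beta(y_2^\beta(r)),K_\beta(y_2^\beta(r)))$ is a local minimum.

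The main obstacle is precisely the departure from Lemma \ref{alem6}: there $R_\beta^{(1)}$ was a sum of two decreasing functions and hence manifestly monotone, whereas here $R_\beta^{(2)}=H_\beta(\,\cdot-3r/2)+S_\beta^{(1)}$ adds an increasing term to the decreasing $S_\beta^{(1)}$ and is genuinely non-monotone near $g_\beta$ (for small $r$ its derivative there is positive). The entire difficulty is therefore to exclude spurious roots of $R_\beta^{(2)}=1$ in the zone where monotonicity breaks down, and this is resolved by the bound $R_\beta^{(2)}<S_\beta^{(2)}\le 2/\beta<1$ on $[\tilde g_\beta,g_\beta]$; the determinant bookkeeping and the sign analysis of the $a_i$ are routine and mirror Lemma \ref{alem6}.
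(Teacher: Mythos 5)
Your proof is correct and takes essentially the same route as the paper's: the same comparison $R_\beta^{(2)}<S_\beta^{(2)}$ and $\frac{d}{dy}R_\beta^{(2)}<\frac{d}{dy}S_\beta^{(2)}$ combined with assertion (2) of Lemma \ref{alem4} and the bound $S_\beta^{(2)}(g_\beta)=2/\beta<1$ to trap the unique root in $(-\infty,\tilde g_\beta)$, and the same determinant factorization carried over from Lemma \ref{alem6}. The only cosmetic difference is that you conclude positive definiteness from the positive $(1,1)$-entry $a_0+a_1$ via Sylvester's criterion, whereas the paper checks the trace $2b_0+b_1+b_2>0$ using \eqref{HK inv}; both are equally valid once $\det>0$ is established.
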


\begin{proof}
\label{6k3}
Denote by $R_{\beta}^{(2)}(y)$ the left hand side of \eqref{6k2}.
By the fact that $H_{\beta}$ is increasing, and by \eqref{derhk},
we have that 
\begin{equation}
R_{\beta}^{(2)}(y)\;<\;S_{\beta}^{(2)}(y)\;\;
\text{and}\;\;\frac{dR_{\beta}^{(2)}}{dy}(y)\;<\;
\frac{dS_{\beta}^{(2)}}{dy}(y)\;\;\mbox{for all }y\;
\in\;(-\infty,\,g_{\beta}]\;.
\end{equation}
Furthermore, 
\begin{equation*}
\lim_{y\rightarrow-\infty}R_{\beta}^{(2)}(y)\;=\;\infty\;\;
\mbox{and}\;\;R_{\beta}^{(2)}(g_{\beta})\;<\;S_{\beta}^{(2)}(g_{\beta})
\;=\; 3 l_{\beta} \;=\; 2/\beta \;<\; 1\;,
\end{equation*}
the equation $R_{\beta}^{(2)}(y)=1$ has at least one solution. By the
first inequality of \eqref{6k3}, by (2) of Lemma \ref{alem4}, and by
the fact that $S_{\beta}^{(2)}(g_{\beta})<1$, the solution of
$R_{\beta}^{(2)}(y)=1$ should be less than $\tilde{g}_{\beta}$.  By
the second inequality of \eqref{6k3} and by (2) of Lemma \ref{alem4},
$R_{\beta}^{(2)}$ is strictly decreasing on
$(-\infty,\,\tilde{g}_{\beta})$.  Hence, the solution of the equation
$R_{\beta}^{(2)}(y)=1$ is unique.

We now prove that the Hessian of $F_{\beta}(\cdot,r)$ at
$(H_{\beta}(y),\,K_{\beta}(y))$ is positive definite for all
$y<\tilde{g}_{\beta}$. Let
\begin{equation*}
b_{0}(y)\;=\;\frac{1}{\beta
  H_{\beta}(y-3r/2)}-\frac{3}{2}\;,\;\;b_{1}(y)\;
=\;\frac{1}{\beta H_{\beta}(y)}-\frac{3}{2}\;,\;\;b_{2}(y)
\;=\;\frac{1}{\beta K_{\beta}(y)}-\frac{3}{2}\;.
\end{equation*}
The positiveness of $\det\big[ (\nabla^{2}F_{\beta})
(H_{\beta}(y),\,K_{\beta}(y),\,r)\big]$ can be proven as in the proof
of Lemma \ref{alem6}.  The trace of the Hessian, which can be written
as $2b_{0}(y)+b_{1}(y)+b_{2}(y)$, is positive since $b_{0}(y)>0$ and
$b_{1}(y)+b_{2}(y)>0$, where the latter inequality follows from
\eqref{HK inv}.
\end{proof}

For $r\in(0,\,r_{1}^{\beta})$, by Lemmata \ref{alem6} and \ref{alem5},
we obtain four additional critical points on
$\{\boldsymbol{x}:x_{1}\neq x_{2}\}$
\begin{align*}
&\boldsymbol{\sigma}_{1}^{\beta}(r)\;=\;
\big(K_{\beta}(y_{1}^{\beta}(r)),\,H_{\beta}(y_{1}^{\beta}(r))\big)
\;,\;\;\boldsymbol{\sigma}_{2}^{\beta}(r)\;=\;
\big(H_{\beta}(y_{1}^{\beta}(r)),\,K_{\beta}(y_{1}^{\beta}(r))\big)\;,\;\;\\
&\quad\boldsymbol{m}_{1}^{\beta}(r)\;=\;
\big(H_{\beta}(y_{2}^{\beta}(r)),\,K_{\beta}(y_{2}^{\beta}(r))\big)
\;,\;\;\boldsymbol{m}_{2}^{\beta}(r)\;=\;
\big(K_{\beta}(y_{2}^{\beta}(r)),\,H_{\beta}(y_{2}^{\beta}(r))\big)\;.
\end{align*}
One can easily verify that this notation coincides with the one
adopted in Section 5.1 for small $r$. On the other hand, for
$r\in(r_{1}^{\beta},\,\infty)$, there are only two critical points
$\boldsymbol{m}_{1}^{\beta}(r),\,\boldsymbol{m}_{2}^{\beta}(r)$ on
$\{\boldsymbol{x}:x_{1}\neq x_{2}\}$ which are local minima of
$F_{\beta}(\cdot,r)$.

\smallskip
\noindent{\bf C. The structure of the valleys.}
We first compare the heights of the local minima of $F_\beta(\cdot,\,r)$. For $r>r_2^\beta$, there are two local minima $\boldsymbol{m}_{1}^{\beta}(r)$ and $\boldsymbol{m}_{1}^{\beta}(r)$, and it is obvious from the symmetry that 
\begin{equation*}
F_{\beta}(\boldsymbol{m}_{1}^{\beta}(r),r)\;=\;
F_{\beta}(\boldsymbol{m}_{2}^{\beta}(r),r);.
\end{equation*}
Hence, it suffices to focus only on the case $r<r_2^\beta$. 

\begin{lemma}
\label{alem7}
For $r\in(0,\,r_{2}^{\beta})$, the three local minima of
$F_{\beta}(\cdot,r)$ satisfy
\begin{equation*}
F_{\beta}(\boldsymbol{m}_{1}^{\beta}(r),r)\;=\;
F_{\beta}(\boldsymbol{m}_{2}^{\beta}(r),r)\;<\;
F_{\beta}(\boldsymbol{m}_{0}^{\beta}(r),r)\;.
\end{equation*}
In particular, $\boldsymbol{m}_{1}^{\beta}(r)$ and
$\boldsymbol{m}_{2}^{\beta}(r)$ are the global minima of
$F_{\beta}(\cdot,r)$ for all $r>0$. 
\end{lemma}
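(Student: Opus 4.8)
The plan is to reduce the statement to a monotonicity property in $r$ of the gap $D(r):=\phi_0^\beta(r)-\phi_1^\beta(r)$ between the relevant energy levels, where $\phi_i^\beta(r)=F_\beta(\boldsymbol m_i^\beta(r),r)$ as in Section \ref{sec51}. First, the equality $\phi_1^\beta(r)=\phi_2^\beta(r)$ is immediate from symmetry: for $\theta_{\text e}=\pi$ the reflection $x_1\leftrightarrow x_2$ preserves $F_\beta(\cdot,r)$, since both the unperturbed potential $F_\beta$ and the field term $-r\sum_i x_i\cos(\pi-2\pi i/3)$ are symmetric in $x_1,x_2$, and by the constructions in Lemma \ref{alem5} this reflection carries $\boldsymbol m_1^\beta(r)=(H_\beta(y_2^\beta(r)),K_\beta(y_2^\beta(r)))$ to $\boldsymbol m_2^\beta(r)$. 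It therefore remains to prove the strict inequality $D(r)>0$ for $r\in(0,r_2^\beta)$, which I would obtain by showing that $D(0)=0$ and that $D$ is strictly increasing on $(0,r_2^\beta)$.

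For the boundary value, at $r=0$ the external field vanishes and $\boldsymbol m_0^\beta(0),\boldsymbol m_1^\beta(0)$ coincide with the zero-field minima of Section \ref{sec4}, all lying at the common height $h_\beta$; hence $D(0)=h_\beta-h_\beta=0$. For the derivative, I would invoke the envelope identity: since $\boldsymbol m_i^\beta(r)$ is a critical point of $F_\beta(\cdot,r)$, the $\boldsymbol x$-gradient contribution to $\tfrac{d}{dr}\phi_i^\beta$ vanishes, leaving
\[
\frac{d\phi_i^\beta}{dr}(r)\;=\;(\partial_r F_\beta)(\boldsymbol m_i^\beta(r),r)\;.
\]
From \eqref{F_b} with $\theta_{\text e}=\pi$ one computes $\partial_r F_\beta(\boldsymbol x,r)=x_0-\tfrac12(x_1+x_2)=\tfrac32 x_0-\tfrac12$, so that, writing $[\boldsymbol x]_0=1-x_1-x_2$ for the zeroth coordinate,
\[
D'(r)\;=\;\frac{3}{2}\Big([\boldsymbol m_0^\beta(r)]_0-[\boldsymbol m_1^\beta(r)]_0\Big)\;.
\]

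It then suffices to show that the $x_0$-coordinate of the metastable minimum strictly exceeds that of the global one. By construction $\boldsymbol m_0^\beta(r)=(p_\beta(r),p_\beta(r))$, so $[\boldsymbol m_0^\beta(r)]_0=1-2p_\beta(r)$; since $p_\beta(r)<m_0(r)\le m_0<1/4$ by Lemma \ref{alem1}, this exceeds $1/2$. On the other hand, from Lemma \ref{alem5} we have $[\boldsymbol m_1^\beta(r)]_0=H_\beta(y_2^\beta(r)-3r/2)\le H_\beta(g_\beta)=l_\beta=2/(3\beta)<1/3$, using that $H_\beta$ is increasing on $(-\infty,g_\beta]$ and that $\beta>2$. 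Hence $D'(r)\ge\tfrac32(\tfrac12-\tfrac13)>0$ throughout $(0,r_2^\beta)$, and together with $D(0)=0$ this gives $D(r)>0$, that is $\phi_1^\beta(r)=\phi_2^\beta(r)<\phi_0^\beta(r)$. The final assertion follows since for $r>r_2^\beta$ only $\boldsymbol m_1^\beta(r),\boldsymbol m_2^\beta(r)$ persist as equal local minima, while for $r<r_2^\beta$ they sit strictly below $\boldsymbol m_0^\beta(r)$, the only other local minimum by Lemmata \ref{alem3} and \ref{alem5}.

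The main point requiring care is the validity of the envelope identity across the whole interval, i.e. the $C^1$-dependence of the branches $r\mapsto\boldsymbol m_i^\beta(r)$ on $(0,r_2^\beta)$. I would secure this by the implicit function theorem applied to the defining relations $f_r(p_\beta(r))=\beta$ and $R_\beta^{(2)}(y_2^\beta(r))=1$: the relevant one-variable functions are strictly monotone near their roots, by Lemma \ref{alem1}(2) and by the strict monotonicity of $R_\beta^{(2)}$ on $(-\infty,\tilde g_\beta)$ established in Lemma \ref{alem5}, so the implicit derivatives exist and the branches are smooth. The remaining ingredients — the explicit form of $\partial_r F_\beta$ and the two coordinate bounds — are elementary and rely only on facts already recorded above.
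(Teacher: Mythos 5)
Your proposal is correct and follows essentially the same route as the paper: both exploit that the three values coincide at $r=0$ and then differentiate $r\mapsto F_\beta(\boldsymbol m_i^\beta(r),r)$ via the chain rule (the critical-point/envelope identity), reducing everything to the sign of $\tfrac32 x_0-\tfrac12$ at the minima, with $p_\beta(r)<1/3$ for $\boldsymbol m_0^\beta(r)$ and $H_\beta(y_2^\beta(r)-3r/2)<\tfrac12\bigl(H_\beta(y_2^\beta(r))+K_\beta(y_2^\beta(r))\bigr)$ for $\boldsymbol m_1^\beta(r)$, $\boldsymbol m_2^\beta(r)$. The only cosmetic difference is that the paper shows $\phi_0^\beta$ is increasing and $\phi_1^\beta=\phi_2^\beta$ decreasing separately, whereas you bound the gap $D(r)$ directly (and you make explicit the smoothness of the branches, which the paper leaves implicit); the substance is identical.
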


\begin{proof}
Since $\boldsymbol{m}_{i}^{\beta}(0)=\boldsymbol{m}_{i}^{\beta}$,
the three values $F_{\beta}(\boldsymbol{m}_{i}^{\beta}(0),0)$, $0\le
i\le2$, are the same. Hence, it suffices to prove that
\begin{equation}
\label{hm1}
\frac{d}{dr}F_{\beta}(\boldsymbol{m}_{0}^{\beta}(r),r)
\;>\;0\;\;\mbox{and}\;\;\frac{d}{dr}F_{\beta}(\boldsymbol{m}_{1}^{\beta}(r),r)
\;=\;\frac{d}{dr}F_{\beta}(\boldsymbol{m}_{2}^{\beta}(r),r)\;<\;0\;.
\end{equation}
By the chain rule, and by the fact that $\boldsymbol{m}_{i}^{\beta}(r)$,
$0\le i\le2$, are critical points of $F_{\beta}(\cdot,r)$, it is
easy to check that 
\begin{gather*}
\frac{d}{dr}F_{\beta}(\boldsymbol{m}_{0}^{\beta}(r),r)
\;=\;1-3p_{\beta}(r)\\
\frac{d}{dr}F_{\beta}(\boldsymbol{m}_{i}^{\beta}(r),r)
\;=\;H_{\beta}\big(y_{1}^{\beta}(r)-3r/2\big) \;-\;
\frac{H_{\beta}(y_{1}^{\beta}(r))+K_{\beta}(y_{1}^{\beta}(r))}{2}
\;,\;i=1,\,2\;.
\end{gather*}
Assertion \eqref{hm1} follows from the fact that $p_{\beta}(r)<1/3$
and $H_{\beta}\big(y_{1}^{\beta}(r)-3r/2\big)<
H_{\beta}(y_{1}^{\beta}(r))<K_{\beta}(y_{1}^{\beta}(r))$.
\end{proof}

To compare the heights of saddle points, let 
\begin{equation*}
h_{0}^{\beta}(r)\;=\;F_{\beta}(\boldsymbol{\sigma}_{0}^{\beta}(r),r)
\;\;\mbox{and}\;\;h_{1}^{\beta}(r)\;=\;
\begin{cases}
F_{\beta}(\boldsymbol{\sigma}_{1}^{\beta}(r),r) 
& \mbox{if }r\in(0,\,r_{1}^{\beta})\;,\\
F_{\beta}(\boldsymbol{p}^{\beta}(r),r) 
& \mbox{if }r\in(r_{1}^{\beta},\,r_{2}^{\beta})\;.
\end{cases}
\end{equation*}

\begin{lemma}
\label{alem8}
For $r\in(0,\,r_{1}^{\beta})\cup(r_{1}^{\beta},\,r_{2}^{\beta})$,
we have that $h_{0}^{\beta}(r)<h_{1}^{\beta}(r)$. 
\end{lemma}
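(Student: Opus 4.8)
The plan is to treat the two sub-intervals $(0,r_1^\beta)$ and $(r_1^\beta,r_2^\beta)$ separately, the common tool being that for $\theta_{\text{e}}=\pi$ the external field enters $F_\beta(\cdot,r)$ only through one linear term. Specializing \eqref{F_b} gives
\[
F_\beta(\boldsymbol x,r)\;=\;F_\beta(\boldsymbol x)\;+\;\tfrac32 r\,x_0\;-\;\tfrac r2\;,
\]
so that $\partial_r F_\beta(\boldsymbol x,r)=\tfrac32 x_0-\tfrac12$. Since $\boldsymbol\sigma_0^\beta(r),\boldsymbol\sigma_1^\beta(r),\boldsymbol p^\beta(r)$ are critical points of $F_\beta(\cdot,r)$ varying smoothly in $r$ (by the implicit function theorem, their Hessians being nondegenerate away from $r_1^\beta$ by Lemmas \ref{alem3} and \ref{alem6}), the envelope identity yields, for any such branch $\boldsymbol z(r)$,
\[
\frac{\textup d}{\textup dr}F_\beta(\boldsymbol z(r),r)\;=\;\partial_r F_\beta(\boldsymbol z(r),r)\;=\;\tfrac32\,x_0\big(\boldsymbol z(r)\big)-\tfrac12\;.
\]

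For $r\in(r_1^\beta,r_2^\beta)$ both relevant critical points lie on the diagonal, $h_0^\beta(r)=\Phi(q_\beta(r))$ and $h_1^\beta(r)=\Phi(u_\beta(r))$ with $\Phi(t):=F_\beta((t,t),r)$. Exactly as in \eqref{pq1} I would compute
\[
\Phi'(t)\;=\;\frac{9(k_r-t)}{\beta}\,\bigl(\beta-f_r(t)\bigr)\;,\qquad k_r=\tfrac{1-r}{3}\;,
\]
and read off its sign from Lemma \ref{alem1}: on $(u_\beta(r),k_r)$ one has $f_r>\beta$ and $k_r-t>0$, while on $(k_r,q_\beta(r))$ one has $f_r<\beta$ and $k_r-t<0$, so $\Phi'<0$ on both pieces. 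The apparent singularity at $k_r$ is removable, since $(k_r-t)f_r(t)=\tfrac29\log\frac{1-2t}{t}\to\tfrac29\log\frac{1-2k_r}{k_r}$, whence $\Phi$ is $C^1$ and strictly decreasing on all of $[u_\beta(r),q_\beta(r)]$. As $u_\beta(r)<k_r<\tfrac13<q_\beta(r)$, this gives $h_1^\beta(r)=\Phi(u_\beta(r))>\Phi(q_\beta(r))=h_0^\beta(r)$.

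For $r\in(0,r_1^\beta)$ the point $\boldsymbol\sigma_1^\beta(r)$ is off-diagonal, so I would differentiate in $r$. The envelope identity gives
\[
\frac{\textup d}{\textup dr}\bigl(h_1^\beta-h_0^\beta\bigr)(r)\;=\;\tfrac32\Bigl(x_0\big(\boldsymbol\sigma_1^\beta(r)\big)-x_0\big(\boldsymbol\sigma_0^\beta(r)\big)\Bigr)\;.
\]
Here $x_0(\boldsymbol\sigma_0^\beta(r))=1-2q_\beta(r)<\tfrac13$ because $q_\beta(r)>\tfrac13$, whereas the coordinates of $\boldsymbol\sigma_1^\beta(r)$, namely $x_0=K_\beta(y_1^\beta(r)-3r/2)$, $x_1=K_\beta(y_1^\beta(r))$ and $x_2=H_\beta(y_1^\beta(r))$, satisfy $x_0>x_1>x_2>0$: indeed $K_\beta$ is decreasing and $y_1^\beta(r)-3r/2<y_1^\beta(r)$, while $K_\beta>l_\beta>H_\beta$. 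Since they sum to $1$, this forces $x_0(\boldsymbol\sigma_1^\beta(r))>\tfrac13$. Hence the right-hand side is strictly positive, $h_1^\beta-h_0^\beta$ is strictly increasing, and because the zero-field $S_3$-symmetry gives $h_0^\beta(0)=h_1^\beta(0)=H_\beta$, integration yields $h_0^\beta(r)<h_1^\beta(r)$ for every $r\in(0,r_1^\beta)$.

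The only points requiring genuine care are the justification of the envelope identity (smoothness and persistence of the branches $\boldsymbol\sigma_i^\beta(r)$ on the open intervals, plus their continuity down to $r=0$ so that the symmetric base case applies) and the sign bookkeeping through the removable singularity of $f_r$ at $k_r$ in the first range. What could have been the crux in the second range — comparing the $x_0$-coordinates of the two saddles — turns out to be immediate from the coordinate ordering $x_0>x_1>x_2$ at $\boldsymbol\sigma_1^\beta(r)$, so I expect the bookkeeping in the first range to be the most delicate step.
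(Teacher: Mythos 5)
Your proof is correct and follows essentially the same route as the paper: for $r\in(r_{1}^{\beta},\,r_{2}^{\beta})$ the paper likewise reduces the claim to the monotonicity of $t\mapsto F_{\beta}(t,t,r)$ on $(u_{\beta}(r),\,q_{\beta}(r))$, and for $r\in(0,\,r_{1}^{\beta})$ it uses the same envelope-type comparison of $\frac{d}{dr}F_\beta$ at the two saddle branches (``analogous to the proof of Lemma \ref{alem7}'') with the zero-field symmetry $h_0^\beta(0)=h_1^\beta(0)=H_\beta$ as base case. Your write-up simply makes explicit the details the paper leaves to the reader --- the formula for $\Phi'$, the removable singularity at $k_r$, and the coordinate ordering $x_0>x_1>x_2$ at $\bs{\sigma}_1^\beta(r)$ --- and these all check out.
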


\begin{proof}
An argument, analogous to the one presented in the proof of Lemma
\ref{alem7}, proves the assertion of the lemma for
$r\in(0,\,r_{1}^{\beta})$. For $r\in(r_{1}^{\beta},\,r_{2}^{\beta})$,
one can check that the function $t\mapsto F_{\beta}(t,t,r)$ is
decreasing on $(u_{\beta}(r),q_{\beta}(r))$. The assertion of the
lemma follows automatically.
\end{proof}

To complete the description of the metastable behavior, we investigate
the structure of the valleys for each value of $r$. We start with an
elementary observation.

\begin{lemma}
\label{alem9}
For $r\in(0,\,r_{1}^{\beta})\cup(r_{1}^{\beta},\,r_{2}^{\beta})$,
define the line $\boldsymbol{l}_{\beta}(r)$ as 
\begin{equation*}
\boldsymbol{l}_{\beta}(r)\;=\;
\begin{cases}
\{\boldsymbol{x}:x_{1}+x_{2}=H_{\beta}(y_{1}^{\beta}(r))
+K_{\beta}(y_{1}^{\beta}(r))\}\cap\Xi 
& \mbox{if }r\in(0,\,r_{1}^{\beta})\\
\{\boldsymbol{x}:x_{1}+x_{2}=2u_{\beta}(r)\}\cap\Xi 
& \mbox{if }r\in(r_{1}^{\beta},\,r_{2}^{\beta})
\end{cases}
\end{equation*}
Then, $F_{\beta}(\cdot,r)$ restricted to $\boldsymbol{l}_{\beta}(r)$
achieves its minimum only at $\boldsymbol{\sigma}_{1}^{\beta}(r)$ and
$\boldsymbol{\sigma}_{2}^{\beta}(r)$ if $r\in(0,\,r_{1}^{\beta})$, and
at $\boldsymbol{p}^{\beta}(r)$ if
$r\in(r_{1}^{\beta},\,r_{2}^{\beta})$.
\end{lemma}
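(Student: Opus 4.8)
The plan is to reduce the two-dimensional minimization to a one-dimensional one by exploiting that $x_{0}$ is constant along $\boldsymbol{l}_{\beta}(r)$. Writing $x_{1}+x_{2}=c$ on $\boldsymbol{l}_{\beta}(r)$ (so that $x_{0}=1-c$ is fixed) and recalling that $\theta_{\text{e}}=\pi$, the external-field contribution $-r\sum_{i=0}^{2}x_{i}\cos(\pi-2\pi i/3)$ depends only on $x_{0}$ and $x_{1}+x_{2}$, hence is constant on the line; the same holds for the $x_{0}$-parts of the energy and of the entropy. Parametrizing the segment by $x_{1}=s$, $x_{2}=c-s$ with $s\in(0,c)$ and setting $\phi(s)=F_{\beta}(s,c-s,r)$, a direct computation (differentiating along the line direction $(1,-1)$, using the first-order identities behind \eqref{crit4} and the Hessian \eqref{hess}) gives, with $G_{\beta}(x)=(1/\beta)\log x-(3/2)x$,
\begin{equation*}
\phi'(s)\;=\;G_{\beta}(s)-G_{\beta}(c-s)\;,\qquad
\phi''(s)\;=\;\frac{1}{\beta}\Big(\frac{1}{s}+\frac{1}{c-s}\Big)-3\;.
\end{equation*}
Thus a point of $\boldsymbol{l}_{\beta}(r)$ is a critical point of $\phi$ precisely when $G_{\beta}(x_{1})=G_{\beta}(x_{2})$, which is the first part of the critical-point condition \eqref{crit4}.

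Next I would catalogue the critical points of $\phi$. The symmetric point $s=c/2$ always solves $\phi'=0$. Since $G_{\beta}$ is increasing on $(0,l_{\beta})$ and decreasing on $(l_{\beta},\infty)$, any asymmetric solution $s\neq c/2$ must satisfy $\{s,c-s\}=\{H_{\beta}(y),K_{\beta}(y)\}$ for the $y$ with $S_{\beta}^{(1)}(y)=c$; by the strict monotonicity of $S_{\beta}^{(1)}$ (Lemma \ref{alem4}(1)) there is at most one such $y$, hence at most one asymmetric pair. As $S_{\beta}^{(1)}$ decreases from $+\infty$ (as $y\to-\infty$) to $S_{\beta}^{(1)}(g_{\beta})=2l_{\beta}$, this pair exists if and only if $c>2l_{\beta}$. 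The whole dichotomy is then governed by the sign of $\phi''(c/2)=4/(\beta c)-3$, which is the sign of $2l_{\beta}-c$; note also that $\phi''$ attains its minimum at $s=c/2$, and that $\phi'(0^{+})=-\infty$, $\phi'(c^{-})=+\infty$.

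For $r\in(r_{1}^{\beta},r_{2}^{\beta})$ we have $c=2u_{\beta}(r)$, and \eqref{rcr} gives $u_{\beta}(r)<l_{\beta}$, so $c<2l_{\beta}$ and $\phi''(c/2)>0$. As $\phi''$ is minimized at $c/2$, it is positive throughout $(0,c)$, so $\phi$ is strictly convex and its unique minimizer is $s=c/2$, that is, $\boldsymbol{p}^{\beta}(r)=(u_{\beta}(r),u_{\beta}(r))$, as claimed.

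For $r\in(0,r_{1}^{\beta})$ we have $c=S_{\beta}^{(1)}(y_{1}^{\beta}(r))$ with $y_{1}^{\beta}(r)<g_{\beta}$ (so the saddles of Lemma \ref{alem6} genuinely lie off the diagonal and do not collapse to $(l_{\beta},l_{\beta})$), whence $c>2l_{\beta}$ and $\phi''(c/2)<0$: the symmetric point is a strict local maximum. Consequently $\phi$ has exactly three critical points, the maximum at $c/2$ together with the symmetric pair $\boldsymbol{\sigma}_{1}^{\beta}(r),\boldsymbol{\sigma}_{2}^{\beta}(r)$; reading the sign of $\phi'$ off the boundary behavior shows these two are local minima of equal value, while the endpoints are excluded since $\phi'\to\mp\infty$ there. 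Hence the minimum of $F_{\beta}(\cdot,r)$ on $\boldsymbol{l}_{\beta}(r)$ is attained exactly at $\boldsymbol{\sigma}_{1}^{\beta}(r)$ and $\boldsymbol{\sigma}_{2}^{\beta}(r)$. The only genuinely delicate step is pinning down the sign of $\phi''(c/2)$, i.e.\ comparing $c$ with $2l_{\beta}$: in the first regime this is exactly the content of \eqref{rcr}, and in the second it is the strict inequality $y_{1}^{\beta}(r)<g_{\beta}$; once these are in hand, the remaining one-dimensional convexity/concavity analysis is routine.
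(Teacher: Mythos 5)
Your proof is correct and follows the same route as the paper, which reduces the claim to a one-variable problem along $\boldsymbol{l}_{\beta}(r)$ and settles it ``by a simple differentiation''; your computation $\phi'(s)=G_{\beta}(s)-G_{\beta}(c-s)$, the classification of the critical points of $\phi$ via the monotonicity of $S_{\beta}^{(1)}$, and the sign of $\phi''(c/2)$ via \eqref{rcr} and $y_{1}^{\beta}(r)<g_{\beta}$ simply make explicit the details the paper leaves to the reader.
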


\begin{proof}
For the first part, it can be shown by a simple differentiation that the function 
\begin{equation*}
t\;\mapsto\; F_{\beta}(t,\,H_{\beta}(y_{1}^{\beta}(r))+
K_{\beta}(y_{1}^{\beta}(r))-t,\,r)
\end{equation*}
achieves a minimum only at $t=H_{\beta}(y_{1}^{\beta}(r))$ and
$t=K_{\beta}(y_{1}^{\beta}(r))$. The proof of the second assertion is
similar.
\end{proof}

Next lemma describes the structure of the valleys at heights
$h_{0}^{\beta}(r)$ and $h_{1}^{\beta}(r)$.

\begin{lemma}
\label{str1}
For all $r>0$, the set $\{ \boldsymbol{x}\in\Xi:
F_{\beta}(\boldsymbol{x},r) < h_{0}^{\beta}(r)\} $ has two connected
components, denoted by $W_{\beta}(1,r)$ and $W_{\beta}(2,r)$, such
that $\boldsymbol{m}_{i}^{\beta}(r)\in W_{\beta}(i,r)$, $i=1,\,2$, and
$\overline{W_{\beta}(1,r)}\cap\overline{W_{\beta}(2,r)}=
\{\boldsymbol{\sigma}_{0}^{\beta}(r)\}$.  For small enough $r$, there
is an additional component, represented by $W_{\beta}(0,r)$, containing
$\boldsymbol{m}_{0}^{\beta}$ and satisfying
$\overline{W_{\beta}(0,r)}\cap\overline{W_{\beta}(i,r)}=\varnothing$
for $i=1,\,2$.
\end{lemma}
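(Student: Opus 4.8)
The proof rests on two height barriers, and the plan is to deploy them in turn. The first is the diagonal $A=\{\boldsymbol{x}:x_{1}=x_{2}\}$, on which the saddle $\boldsymbol{\sigma}_{0}^{\beta}(r)=(q_{\beta}(r),q_{\beta}(r))$ will realize the threshold $h_{0}^{\beta}(r)$; the second, available when $r<r_{2}^{\beta}$, is the transverse line $\boldsymbol{l}_{\beta}(r)$ of Lemma \ref{alem9}, on which $F_{\beta}(\cdot,r)$ exceeds $h_{1}^{\beta}(r)>h_{0}^{\beta}(r)$ by Lemma \ref{alem8}. I would use $\boldsymbol{l}_{\beta}(r)$ to cut off the shallow well around $\boldsymbol{m}_{0}^{\beta}(r)$ and then use $A$ to split the remaining region into the two global valleys. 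Throughout, I exploit the reflection $\iota(x_{1},x_{2})=(x_{2},x_{1})$: since $\theta_{\textup{e}}=\pi$ equalizes the coefficients of $x_{1}$ and $x_{2}$ in \eqref{F_b}, the potential $F_{\beta}(\cdot,r)$ is $\iota$--invariant, $\iota$ fixes $A$ and $\boldsymbol{\sigma}_{0}^{\beta}(r)$ and interchanges $\boldsymbol{m}_{1}^{\beta}(r)$ with $\boldsymbol{m}_{2}^{\beta}(r)$. Because the flow $\dot{\boldsymbol{y}}=-\nabla F_{\beta}(\boldsymbol{y},r)$ commutes with $\iota$, uniqueness of solutions forces any trajectory issued off $A$ to avoid $A$ for all later times; moreover, as in \eqref{01f} (still valid with a field, the entropic logarithms dominating the bounded field terms), such trajectories never reach $\partial\Xi$.

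First I would describe $F_{\beta}(\cdot,r)$ along $A$. With $x_{1}=x_{2}=t$, $x_{0}=1-2t$, the critical points on $A$ are exactly $p_{\beta}(r)<u_{\beta}(r)<q_{\beta}(r)$, and $A$ is the $(1,1)$ eigendirection of the Hessian \eqref{n000}. By \eqref{n001} and \eqref{n002} the corresponding eigenvalue is positive at $p_{\beta}(r)$ and $q_{\beta}(r)$, and by \eqref{n003} it is negative at $u_{\beta}(r)$, so $t\mapsto F_{\beta}(t,t,r)$ has local minima at $p_{\beta}(r),q_{\beta}(r)$ and a local maximum at $u_{\beta}(r)$, while $F_{\beta}(t,t,r)\to+\infty$ as $t\downarrow0$ and as $t\uparrow1/2$. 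Hence $F_{\beta}(\cdot,r)$ decreases on $(u_{\beta}(r),q_{\beta}(r))$ and increases on $(q_{\beta}(r),1/2)$, so that $F_{\beta}(t,t,r)\ge h_{0}^{\beta}(r)$ for all $t\in[u_{\beta}(r),1/2)$, with equality only at $t=q_{\beta}(r)$. When $r\ge r_{2}^{\beta}$, $\boldsymbol{\sigma}_{0}^{\beta}(r)$ is the only critical point on $A$, and this improves to $F_{\beta}(t,t,r)\ge h_{0}^{\beta}(r)$ on all of $(0,1/2)$, again with equality only at $q_{\beta}(r)$.

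Next I would assemble the components. For $r<r_{2}^{\beta}$ the line $\boldsymbol{l}_{\beta}(r)$ cuts $\Xi$ into a lower region $L\ni\boldsymbol{m}_{0}^{\beta}(r)$ and an upper region $U\ni\boldsymbol{m}_{1}^{\beta}(r),\boldsymbol{m}_{2}^{\beta}(r),\boldsymbol{\sigma}_{0}^{\beta}(r)$ (that $\boldsymbol{\sigma}_{0}^{\beta}(r)\in U$ is the routine inequality $2q_{\beta}(r)>2u_{\beta}(r)$, resp. $2q_{\beta}(r)>H_{\beta}(y_{1}^{\beta}(r))+K_{\beta}(y_{1}^{\beta}(r))$). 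Since $F_{\beta}\ge h_{1}^{\beta}(r)>h_{0}^{\beta}(r)$ on $\boldsymbol{l}_{\beta}(r)$, the set $\{F_{\beta}<h_{0}^{\beta}(r)\}$ does not meet $\boldsymbol{l}_{\beta}(r)$ and splits along it. Inside $U$, the previous paragraph shows that $A\cap U$ carries $F_{\beta}\ge h_{0}^{\beta}(r)$ with equality only at $\boldsymbol{\sigma}_{0}^{\beta}(r)$; together with $\boldsymbol{l}_{\beta}(r)$ and $\partial\Xi$ this divides $U$ into two open halves $U^{-}\ni\boldsymbol{m}_{1}^{\beta}(r)$ and $U^{+}\ni\boldsymbol{m}_{2}^{\beta}(r)$. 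I then claim $\{F_{\beta}<h_{0}^{\beta}(r)\}\cap U^{-}=W_{\beta}(1,r)$ is connected: running the flow from any of its points, the trajectory stays below $h_{0}^{\beta}(r)$ and, by the confinement just described together with the non--crossing of $A$ and $\partial\Xi$, remains in $U^{-}$; as the only critical point of $F_{\beta}(\cdot,r)$ in $\overline{U^{-}}$ below level $h_{0}^{\beta}(r)$ is $\boldsymbol{m}_{1}^{\beta}(r)$ (the off--diagonal saddle, when it exists, sits at height $h_{1}^{\beta}(r)>h_{0}^{\beta}(r)$ by Lemmata \ref{alem6} and \ref{alem8}), every trajectory converges to $\boldsymbol{m}_{1}^{\beta}(r)$, giving path--connectedness. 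The mirror statement in $U^{+}$ gives $W_{\beta}(2,r)$. When $r\ge r_{2}^{\beta}$ the same argument runs verbatim with $A$ itself, now a full barrier, in place of $A\cap U$, and with $L$ and $W_{\beta}(0,r)$ absent.

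Finally I would identify the contact set and the shallow well. At $\boldsymbol{\sigma}_{0}^{\beta}(r)$ the negative eigendirection of \eqref{n000} is $\boldsymbol{w}=(1,-1)$, transverse to $A$ by \eqref{n002}; for small $\epsilon>0$ the points $\boldsymbol{\sigma}_{0}^{\beta}(r)\pm\epsilon\boldsymbol{w}$ satisfy $F_{\beta}<h_{0}^{\beta}(r)$, are interchanged by $\iota$, lie in $U^{+}$ and $U^{-}$, and their descent trajectories converge to $\boldsymbol{m}_{2}^{\beta}(r)$ and $\boldsymbol{m}_{1}^{\beta}(r)$, whence $\boldsymbol{\sigma}_{0}^{\beta}(r)\in\overline{W_{\beta}(1,r)}\cap\overline{W_{\beta}(2,r)}$; conversely any common boundary point lies on $A$ at height $h_{0}^{\beta}(r)$ and therefore equals $\boldsymbol{\sigma}_{0}^{\beta}(r)$, so the intersection of the closures is the singleton claimed. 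For the last assertion, when $r$ is small $\boldsymbol{m}_{0}^{\beta}(r)$ is close to the zero--field minimum and $h_{0}^{\beta}(r)$ to the zero--field saddle value, so by Lemma \ref{alem7} and continuity $F_{\beta}(\boldsymbol{m}_{0}^{\beta}(r),r)<h_{0}^{\beta}(r)$; the component $W_{\beta}(0,r)\subset L$ containing $\boldsymbol{m}_{0}^{\beta}(r)$ is then nonempty, and since $F_{\beta}>h_{0}^{\beta}(r)$ on $\boldsymbol{l}_{\beta}(r)$ its closure cannot meet those of $W_{\beta}(1,r)$ or $W_{\beta}(2,r)$. The main obstacle is exactly the global step here: turning the local eigendirection data at $\boldsymbol{\sigma}_{0}^{\beta}(r)$ and the two barriers into the statement that $\{F_{\beta}<h_{0}^{\beta}(r)\}$ has precisely these components, and ruling out that $W_{\beta}(1,r)$ and $W_{\beta}(2,r)$ are secretly joined through the $\boldsymbol{m}_{0}^{\beta}(r)$ region; it is the combined use of Lemma \ref{alem8} ($h_{0}^{\beta}<h_{1}^{\beta}$) and the separating line of Lemma \ref{alem9} that closes this gap.
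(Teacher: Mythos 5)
Your proposal is correct and follows essentially the same route as the paper: the paper's own proof declares the two-component structure ``obvious,'' isolates the shallow well via the line $\boldsymbol{l}_{\beta}(r)$ together with Lemmata \ref{alem8} and \ref{alem9}, and refers for the contact set $\overline{W_{\beta}(1,r)}\cap\overline{W_{\beta}(2,r)}=\{\boldsymbol{\sigma}_{0}^{\beta}(r)\}$ to the gradient-flow argument of assertion (2) of Proposition \ref{prow} --- which is exactly the barrier-plus-descent machinery you spell out, with the diagonal $\{x_1=x_2\}$ playing the role of the segments $\overline{\bs{p}\bs{q}_i}$ there. You fill in details the paper leaves to the reader (the monotonicity of $t\mapsto F_\beta(t,t,r)$ via the eigenvalue signs \eqref{n001}--\eqref{n003}, the $\iota$-equivariance of the flow, the identification of the unstable direction $(1,-1)$ at $\boldsymbol{\sigma}_{0}^{\beta}(r)$), which is a genuine improvement in completeness. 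One substantive difference: for the additional component you argue by continuity from $r=0$, which yields only ``small enough $r$'' --- sufficient for the statement --- whereas the paper shows $r\mapsto F_{\beta}(\boldsymbol{m}_{0}^{\beta}(r),r)-F_{\beta}(\boldsymbol{\sigma}_{0}^{\beta}(r),r)$ is increasing and thereby obtains a sharp threshold $r_{*}^{\beta}$ below which the component exists and above which it disappears. One assertion of yours is false, though harmlessly so: $F_{\beta}(t,t,r)$ does \emph{not} tend to $+\infty$ as $t\downarrow 0$ or $t\uparrow 1/2$; the entropy $S$ is bounded on the simplex ($t\log t\to 0$), so $F_\beta$ is bounded there, and it is only the boundary derivative that diverges, as in \eqref{01f}. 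The monotonicity you need on $(u_{\beta}(r),q_{\beta}(r))$ and $(q_{\beta}(r),1/2)$ follows instead from the divergence of the derivative of the diagonal restriction at the endpoints together with the absence of further critical points, so the conclusion stands after this one-line repair.
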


\begin{proof}
It is obvious that the set $\{\boldsymbol{x}\in\Xi :F_{\beta}
(\boldsymbol{x},r)< h_{0}^{\beta}(r)\}$ is composed of two connected
components, denoted by $W_{\beta}(1,r)$ and $W_{\beta}(2,r)$, such
that $\boldsymbol{m}_{i}^{\beta}(r) \in W_{\beta}(i,r)$, $i=1,\,2$.

By combining the fact that
$F_{\beta}(\boldsymbol{m}_{0}^{\beta}(0),\,0)
<F_{\beta}(\boldsymbol{\sigma}_{0}^{\beta}(0),\,0)$,
$F_{\beta}(\boldsymbol{m}_{0}^{\beta}(r_{2}^{\beta}),
\,r_{2}^{\beta})>F_{\beta}(\boldsymbol{\sigma}_{0}^{\beta}
(r_{2}^{\beta}),\,r_{2}^{\beta})$ and that the map $r\mapsto
F_{\beta}(\boldsymbol{m}_{0}^{\beta}(r),\,r) -
F_{\beta}(\boldsymbol{\sigma}_{0}^{\beta}(r), \,r)$ is increasing, as
shown in the proof of Lemmata \ref{alem7}, \ref{alem8}, we can assert
that there exists $r_{*}^{\beta}<r_{2}^{\beta}$ such that
\begin{equation*}
\begin{cases}
F_{\beta}(\boldsymbol{m}_{0}^{\beta}(r),\,r) \;<\;
F_{\beta}(\boldsymbol{\sigma}_{0}^{\beta}(r),\,r) & \mbox{if }r<r_{*}^{\beta}\;,\\
F_{\beta}(\boldsymbol{m}_{0}^{\beta}(r),\,r) \;\ge\; 
F_{\beta}(\boldsymbol{\sigma}_{0}^{\beta}(r),\,r) & \mbox{if }r\ge r_{*}^{\beta}\;.
\end{cases}
\end{equation*}
Hence, for $r<r_{*}^{\beta}$, we have an additional connected
component, denoted by $W_{\beta}(0,r)$, containing
$\boldsymbol{m}_{0}^{\beta}(r)$, while this component disappears for
$r\ge r_{*}^{\beta}$. Since, for $r<r_{2}^{\beta}$,
$\boldsymbol{m}_{0}^{\beta}(r)$ and
$\{\boldsymbol{m}_{1}^{\beta}(r),\,\boldsymbol{m}_{2}^{\beta}(r)\}$
are on the different sides of the line $\boldsymbol{l}_{\beta}(r)$,
and since, by Lemmata \ref{alem8}, \ref{alem9},
$F_{\beta}(\cdot,r)>h_{0}^{\beta}(r)$ on the line
$\boldsymbol{l}_{\beta}(r)$, the component $W_{\beta}(0,r)$ is
isolated from the other components. Therefore, it is enough to show
that $\overline{W_{\beta}(1,r)}\cap\overline{W_{\beta}(2,r)} =
\{\boldsymbol{\sigma}_{0}^{\beta}(r)\}$ for all $r>0$. The proof of
this fact is analogous to the one of assertion (2) of Proposition
4.4. The details are left to the reader.
\end{proof}

\begin{lemma}
\label{str2}
For $r\in(0,\,r_{1}^{\beta})\cup(r_{1}^{\beta},\,r_{2}^{\beta})$, the
set $\{ \boldsymbol{x}\in\Xi: F_{\beta}(\boldsymbol{x},r) <
h_{1}^{\beta}(r)\}$ consists of two connected components, denoted by
$V_{\beta}(0,r)$ and $V_{\beta}(1,r)$, such that
$\boldsymbol{m}_{0}^{\beta}(r)\in V_{\beta}(0,r)$ and
$\boldsymbol{m}_{1}^{\beta}(r),\,\boldsymbol{m}_{2}^{\beta}(r)\in
V_{\beta}(1,r)$.  The set
$\overline{V_{\beta}(0,r)}\cap\overline{V_{\beta}(1,r)}$ is equal to
$\{\boldsymbol{\sigma}_{1}^{\beta}(r),\,\boldsymbol{\sigma}_{2}^{\beta}(r)\}$
for $r\in(0,\,r_{1}^{\beta})$, and is equal to
$\{\boldsymbol{p}^{\beta}(r)\}$ for
$r\in(r_{1}^{\beta},\,r_{2}^{\beta})$.
\end{lemma}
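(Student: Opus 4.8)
The plan is to use the line $\bs{l}_\beta(r)$ furnished by Lemma \ref{alem9} as the separating curve, in the same spirit in which the segments $\overline{\bs{p}\bs{q}_i}$ were used in the proof of assertion (2) of Proposition \ref{prow}. By Lemma \ref{alem9}, $F_\beta(\cdot,r)\ge h_1^\beta(r)$ on $\bs{l}_\beta(r)$, with equality only at $\bs{\sigma}_1^\beta(r),\bs{\sigma}_2^\beta(r)$ when $r\in(0,r_1^\beta)$ and only at $\bs{p}^\beta(r)$ when $r\in(r_1^\beta,r_2^\beta)$. Consequently the open sublevel set $\{F_\beta(\cdot,r)<h_1^\beta(r)\}$ is disjoint from $\bs{l}_\beta(r)$, so it is split by this line into pieces lying strictly on one side or the other. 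As already observed in the proof of Lemma \ref{str1}, for $r<r_2^\beta$ the point $\bs{m}_0^\beta(r)$ and the pair $\{\bs{m}_1^\beta(r),\bs{m}_2^\beta(r)\}$ lie on opposite sides of $\bs{l}_\beta(r)$; this puts $\bs{m}_0^\beta(r)$ in one piece, to be called $V_\beta(0,r)$, and the two global minima in the complementary half.

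Next I would show that $\bs{m}_1^\beta(r)$ and $\bs{m}_2^\beta(r)$ lie in a single component $V_\beta(1,r)$, and that these are the only two components. By Lemma \ref{alem8} we have $h_0^\beta(r)<h_1^\beta(r)$, and by Lemma \ref{str1} the closures $\overline{W_\beta(1,r)}$ and $\overline{W_\beta(2,r)}$ meet at $\bs{\sigma}_0^\beta(r)$, where $F_\beta=h_0^\beta(r)$. Hence $W_\beta(1,r)\cup\{\bs{\sigma}_0^\beta(r)\}\cup W_\beta(2,r)$ is connected and contained in $\{F_\beta(\cdot,r)<h_1^\beta(r)\}$, so $\bs{m}_1^\beta(r)$ and $\bs{m}_2^\beta(r)$ belong to the same component $V_\beta(1,r)$. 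To rule out further components I use \eqref{01f}, which forces every sublevel component to contain an interior local minimum of $F_\beta(\cdot,r)$ reached by gradient descent; since the only local minima are $\bs{m}_0^\beta(r),\bs{m}_1^\beta(r),\bs{m}_2^\beta(r)$, there are exactly the two components $V_\beta(0,r)$ and $V_\beta(1,r)$.

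Finally I would identify $\overline{V_\beta(0,r)}\cap\overline{V_\beta(1,r)}$. The inclusion $\subset$ is immediate, since this intersection lies in $\bs{l}_\beta(r)\cap\{F_\beta(\cdot,r)=h_1^\beta(r)\}$, which by Lemma \ref{alem9} equals $\{\bs{\sigma}_1^\beta(r),\bs{\sigma}_2^\beta(r)\}$ for $r\in(0,r_1^\beta)$ and $\{\bs{p}^\beta(r)\}$ for $r\in(r_1^\beta,r_2^\beta)$. For the reverse inclusion I would run the gradient-descent argument of Proposition \ref{prow}(2): at each separating saddle the Hessian has a single negative eigenvalue, whose eigenvector is transverse to $\bs{l}_\beta(r)$, and the two descent trajectories issued from small perturbations along this eigenvector stay below $h_1^\beta(r)$ and land one in $V_\beta(0,r)$ and the other in $V_\beta(1,r)$, placing the saddle in both closures.

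The main obstacle is precisely the transversality assertion, together with the fact that the two trajectories reach distinct components rather than both returning to the same side of $\bs{l}_\beta(r)$. For the diagonal saddle $\bs{p}^\beta(r)$ in the regime $r\in(r_1^\beta,r_2^\beta)$ this is transparent: the symmetry $x_1\leftrightarrow x_2$ forces the Hessian eigenvectors at a diagonal point to be $(1,1)$ and $(1,-1)$, and \eqref{hess} shows that the negative eigenvalue belongs to the $(1,1)$ direction, which crosses the line $\{x_1+x_2=2u_\beta(r)\}$ transversally. For the off-diagonal saddles $\bs{\sigma}_i^\beta(r)$, $i=1,2$, in the regime $r\in(0,r_1^\beta)$, one must instead verify directly that the unstable eigenvector is not tangent to $\bs{l}_\beta(r)$; granting this, the two descent trajectories land on the two sides exactly as in Proposition \ref{prow}, which completes the proof.
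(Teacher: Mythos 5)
Your proposal follows essentially the same route as the paper's proof: the paper also puts $\bs{m}_1^\beta(r)$ and $\bs{m}_2^\beta(r)$ in one component by concatenating sub-level paths through $\bs{\sigma}_0^\beta(r)$ (using Lemmata \ref{alem8} and \ref{str1}), gets the inclusion $\overline{V_\beta(0,r)}\cap\overline{V_\beta(1,r)}\subset\{\bs{\sigma}_1^\beta(r),\bs{\sigma}_2^\beta(r)\}$ (resp.\ $\{\bs{p}^\beta(r)\}$) from Lemma \ref{alem9}, and obtains the reverse inclusion by the gradient-descent construction of Proposition \ref{prow}(2), leaving the details and the whole case $r\in(r_1^\beta,r_2^\beta)$ ``to the reader.'' Your write-up is in places more explicit than the paper's: the component count via \eqref{01f} and the symmetry computation at the diagonal saddle $\bs{p}^\beta(r)$ are both sound (with the minor caveat that a descent trajectory may a priori terminate at the saddle $\bs{\sigma}_0^\beta(r)$ rather than at a local minimum; since $\bs{\sigma}_0^\beta(r)\in V_\beta(1,r)$, this does not create a third component).

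The step you flag but do not close --- transversality of the unstable eigenvector to $\bs{l}_\beta(r)$ at the off-diagonal saddles $\bs{\sigma}_1^\beta(r)$, $\bs{\sigma}_2^\beta(r)$ --- is the one genuine hole (the paper glosses over it as well), but it closes in one line using Lemma \ref{alem9} itself, with no computation at these saddles. Since $F_\beta(\cdot,r)$ restricted to the line $\bs{l}_\beta(r)$ attains its minimum at $\bs{\sigma}_i^\beta(r)$, the quadratic form of the Hessian in the tangent direction $\bs{u}=(1,-1)^\dagger$ is nonnegative there,
\begin{equation*}
\bs{u}^\dagger\,(\nabla^2 F_\beta)(\bs{\sigma}_i^\beta(r),r)\,\bs{u}\;\ge\;0\;,
\end{equation*}
whereas the unstable eigenvector $\bs{w}$ satisfies $\bs{w}^\dagger(\nabla^2 F_\beta)\bs{w}<0$; hence $\bs{w}$ is not parallel to $(1,-1)$, so $\bs{w}\cdot(1,1)\neq0$ and the perturbed points $\bs{\sigma}_i^\beta(r)\pm\epsilon\bs{w}$ lie strictly on opposite sides of $\bs{l}_\beta(r)$. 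A descent trajectory started below level $h_1^\beta(r)$ can never touch $\bs{l}_\beta(r)$, so the two trajectories stay on their respective sides; on the side of $\bs{m}_0^\beta(r)$ the trajectory must converge to $\bs{m}_0^\beta(r)$, because the only other candidate limits are $\bs{p}^\beta(r)$, which for $r<r_1^\beta$ is a local maximum and hence not the limit of a nonconstant descent trajectory, and $\bs{\sigma}_1^\beta(r)$, $\bs{\sigma}_2^\beta(r)$, which sit at level $h_1^\beta(r)$, above the values along the trajectory. With this addition your argument is complete and coincides with the paper's.
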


\begin{proof}
We first consider the case $r\in(0,\,r_{1}^{\beta})$. By
Lemma \ref{str1}, there exist two paths
$\boldsymbol{\gamma}_{i}^{\beta}(r)$, $i=1,\,2$, connecting
$\boldsymbol{\sigma}_{0}^{\beta}(r)$ to
$\boldsymbol{m}_{i}^{\beta}(r)$ and satisfying
$F_{\beta}(\boldsymbol{x},r) \le
F_{\beta}(\boldsymbol{\sigma}_{0}^{\beta}(r),r)$ for all
$\boldsymbol{x}\in\boldsymbol{\gamma}_{i}^{\beta}(r)$. By
concatenating these two paths $\boldsymbol{\gamma}_{1}^{\beta}(r)$ and $\boldsymbol{\gamma}_{2}^{\beta}(r)$, we can prove that
$\boldsymbol{m}_{1}^{\beta}(r)$ and $\boldsymbol{m}_{2}^{\beta}(r)$
are in the same connected component $V_{\beta}(1,r)$ of the set $\{
\boldsymbol{x}\in\Xi:F_{\beta}(\boldsymbol{x},r)<h_{1}^{\beta}(r)\} $.
Let $V_{\beta}(0,r)$ be the other component containing
$\boldsymbol{m}_{0}^{\beta}(r)$.  It suffices to show that
$\overline{V_{\beta}(0,r)} \cap \overline{V_{\beta}(1,r)}=
\{\boldsymbol{\sigma}_{1}^{\beta}(r),
\,\boldsymbol{\sigma}_{2}^{\beta}(r)\}$.  By assertion (1) of Lemma \ref{alem9},
the first set is a subset of the second one. It remains to prove the other
inclusion.

By an argument, similar to the one presented in the proof of assertion
(2) of Proposition 4.4, we can construct a path
$\boldsymbol{\delta}_{1}^{\beta}(r)$, connecting
$\boldsymbol{\sigma}_{1}^{\beta}(r)$ and
$\boldsymbol{m}_{0}^{\beta}(r)$ and satisfying
$F_{\beta}(\boldsymbol{x},r)\le
F_{\beta}(\boldsymbol{\sigma}_{1}^{\beta}(r),r)$ for all
$\boldsymbol{x}\in\boldsymbol{\delta}_{1}^{\beta}(r)$, and another
path $\boldsymbol{\delta}_{2}^{\beta}(r)$ connecting
$\boldsymbol{\sigma}_{1}^{\beta}(r)$ and one of the points
$\boldsymbol{m}_{1}^{\beta}(r)$ and $\boldsymbol{m}_{2}^{\beta}(r)$
and satisfying $F_{\beta}(\boldsymbol{x},r)\le
F_{\beta}(\boldsymbol{\sigma}_{1}^{\beta}(r),r)$ for all
$\boldsymbol{x}\in\boldsymbol{\delta}_{2}^{\beta}(r)$. This proves
that $\boldsymbol{\sigma}_{1}^{\beta}(r) \in\overline{V_{\beta}(0,r)}
\cap \overline{V_{\beta}(1,r)}$.  By symmetry,
$\boldsymbol{\sigma}_{2}^{\beta}(r)$ is also an element of the same
set and the proof for $r\in(0,\,r_{1}^{\beta})$ is completed.  The
proof in the case $r\in(r_{1}^{\beta},r_{2}^{\beta})$ is similar and
left to the reader.
\end{proof}

The complete description of the metastable behavior of the random walk
$\bs r_N(t)$ in the case $\beta>2$, $\theta_\text{e}=\pi$ and $r>0$, $r\not =
r^\beta_1$, can be obtained from Lemmata \ref{str1} and \ref{str2} and
the results presented in \cite{LS}.

\subsection{General external field with $\theta_\text{e}=2k\pi/3$}
\label{sec53}

By an analogue computation to the one presented in the previous
subsection, we can rigorously analyze the case
$\theta_\text{e}=2k\pi/3$, $k=0,\,1,\,2$, and $\beta>2$. We do not
repeat the argument here and we only state the main result. 

Assume without loss of generality that $k=0$.  There exists a critical
value $r^\beta$, whose closed form is given by
\begin{equation*}
r^\beta\;=\; h \left( \,\frac{1}{4}+\sqrt{\frac{1}{16}-\frac{1}{9\beta}}\,\right)\;,
\end{equation*}
where $h$ is the function defined in Lemma \ref{alem1}, such that 

\begin{enumerate}
\item [(I)] For $r\in(0,\,r^{\beta})$, we observe the phenomenon
  described in case I of Subsection 5.1.

\item [(II)] For $r\in(r^\beta,\,\infty)$, there is only one critical
  point $\bs{m}_0^\beta (r)$, which is the global minimum. This regime
  is illustrated by the left graph of Figure \ref{fig7}.
\end{enumerate}

The regime (II) is clearly different from the high temperature regime
$\beta<\beta_{3}$ with zero external field in which the entropy
prevails. In the present situation, the spins of the configurations
corresponding to the unique global minimum are highly concentrated on
one spin $\bs{v}_0$, while in the high temperature regime with no
external field the spins are equally distributed among the three
possible values.

We conclude this subsection explaining why there is no intermediate
regime. In the case $\theta_\text{e}=0$, for instance, the study of critical
points on the line $\{\bs{x}:x_1=x_2\}$ is related to the solution of
$\tilde{f}_r (t)=\beta$ where
\begin{equation*}
\tilde{f}_r (t)\;=\;\frac {2}{3(1+r-3t)} \log{\frac{1-2t}{t}}\;.
\end{equation*}
Notice that $\tilde{f}_r (t)$ is obtained by flipping the sign in
front of $r$ in the definition of $f_r(t)$. For $r\in(0,\,r^\beta)$,
as in the discussion after Lemma \ref{alem1}, there are three
solutions $\tilde{p}_\beta (r)<\tilde{u}_\beta (r)<\tilde{q}_\beta
(r)$, while there is only one solution $\tilde{p}_\beta (r)$ for
$r\in(r^\beta,\,\infty)$.

In the case $\theta_\text{e}=\pi$, the second critical value
$r_1^\beta$ was obtained as a solution of $1/(\beta u_\beta(r))=3/2$
(cf. \eqref{rcr}), around which the critical point
$(u_\beta(r),\,u_\beta(r))$ is changed from the local maximum to the
saddle point. However, in the case $\theta_\text{e}=0$, this kind of
discontinuity does not appear since $\tilde{u}_\beta(r)>(1+r)/3>1/3$
so that $1/(\beta \tilde{u}_\beta (r))<3/2$ for all
$r\in(0,\,r_1^\beta)$. This explains why there is no intermediate
phase for $\theta_\text{e}=0$.

\end{document}